\newtheorem{theorem}{Theorem}[section]
\newtheorem{lemma}[theorem]{Lemma}
\newtheorem{definition}[theorem]{Definition}
\newtheorem{example}[theorem]{Example}
\newtheorem{proposition}[theorem]{Proposition}
\newtheorem{corollary}[theorem]{Corollary}
\newtheorem{remark}[theorem]{Remark}
\begin{document}
\title[Valuation Ideal Factorization Domains]{Valuation Ideal Factorization Domains}

\author[G. W. Chang]{Gyu Whan Chang}
\address{\hspace*{-2.2mm} Department of Mathematics Education, Incheon National University, Incheon 22012, Korea}
\email{whan@inu.ac.kr}

\author[A. Reinhart]{Andreas Reinhart}
\address{\hspace*{-2.2mm} Institut f\"ur Mathematik und Wissenschaftliches Rechnen, Karl-Franzens-Universit\"at Graz, NAWI Graz, Heinrichstra{\ss}e 36, 8010 Graz, Austria}
\email{andreas.reinhart@uni-graz.at}

\thanks{Chang was supported by Basic Science Research Program through the National Research Foundation of Korea (NRF) funded by the Ministry of Education (2017R1D1A1B06029867). Reinhart was supported by the Austrian Science Fund FWF, Project Number P36852-N}
\subjclass[2010]{13A15, 13F05, 13G05}
\keywords{P$*$MD, star operation, valuation element, valuation ideal}
\date{\today}

\begin{abstract}
An integral domain $D$ is a {\em valuation ideal factorization domain} (VIFD) if each nonzero principal ideal of $D$ can be written as a finite product of valuation ideals. Clearly, $\pi$-domains are VIFDs. We study the ring-theoretic properties of VIFDs and the $*$-operation analogs of VIFDs. Among them, we show that if $D$ is treed (resp., $*$-treed), then $D$ is a VIFD (resp., $*$-VIFD) if and only if $D$ is an ${\rm h}$-local Pr\"ufer domain (resp., a $*$-${\rm h}$-local P$*$MD) if and only if every nonzero prime ideal of $D$ contains an invertible (resp., a $*$-invertible) valuation ideal. We also study integral domains $D$ such that for each nonzero nonunit $a\in D$, there is a positive integer $n$ such that $a^n$ can be written as a finite product of valuation elements.
\end{abstract}

\maketitle
\allowdisplaybreaks

\section{Introduction}

All rings considered in this paper are commutative with identity. Let $D$ be an integral domain with quotient field $K$. An {\em overring} of $D$ means a subring of $K$ containing $D$. As in \cite[Appendix 3]{zs60}, we say that an ideal $I$ of $D$ is a {\em valuation ideal} if there is a valuation overring $V$ of $D$ such that $IV\cap D=I$. Clearly, each ideal of a valuation domain is a valuation ideal. Conversely, in \cite[Corollary 2.4]{go65}, Gilmer and Ohm showed that if every principal ideal of $D$ is a valuation ideal, then $D$ is a valuation domain. Following \cite{cr20}, we say that a nonzero nonunit $a\in D$ is a {\em valuation element} if $aD$ is a valuation ideal, i.e., there is a valuation overring $V$ of $D$ such that $aV\cap D=aD$. In \cite{cr20}, we studied some ring-theoretic properties of {\em valuation factorization domains} (VFDs), which are integral domains whose nonzero nonunits can be written as a finite product of valuation elements. It is clear that valuation domains and UFDs are VFDs. In this paper, we continue our work on ideal factorization properties of integral domains. It is well-known that $D$ is a Dedekind domain (resp., $\pi$-domain) if and only if every nonzero ideal (resp., nonzero principal ideal) of $D$ can be written as a finite product of prime ideals; in particular, a Dedekind domain is a $\pi$-domain. Recall that a prime ideal of $D$ is a valuation ideal \cite[page 341]{zs60}, so a Dedekind domain (resp., $\pi$-domain) $D$ has the property that every nonzero principal ideal of $D$ can be written as a finite product of valuation ideals, which will be called a {\em valuation ideal factorization domain} (VIFD). In this paper, we study some ideal-theoretic properties of VFDs and VIFDs, and the $*$-operation analog of a VIFD, which is called a {\em $*$-VIFD}.

\smallskip

This paper consists of five sections including the introduction where we review the definitions related with the $t$- and $w$-operations. First, in Section 2, we investigate valuation ideals, valuation elements and their connections to star operations. Let $I$ be a proper valuation ideal of $D$. Among others, we show that if $I$ is $*$-invertible, then $I_*$ is a valuation ideal of $D$ and it is contained in a unique maximal $t$-ideal of $D$. In Section 3, we study the $*$-VIFDs. We introduce the notion of $*$-VIFDs and we show that all of these rings are integrally closed weakly Matlis domains. Furthermore, we prove that $D$ is a VFD if and only if $D$ is a VIFD with ${\rm Pic}(D)=\{0\}$, if and only if $D$ is a $*$-VIFD with ${\rm Cl}_*(D)=\{0\}$. It is also shown that $D$ is a $*$-${\rm h}$-local P$*$MD if and only if every nonzero ($*$-finite) $*$-ideal of $D$ can be written as a finite $*$-product of $*$-comaximal valuation ideals of $D$. We further study the $*$-VIFDs that are $*$-treed. In Section 4, we consider VIFDs and $t$-VIFDs. Finally, in Section 5, we investigate AVFDs, i.e., integral domains for which every nonzero nonunit has a power that is a finite product of valuation elements. Among other things, we show that if $D$ is a $*$-VIFD such that ${\rm Cl}_*(D)$ is a torsion group, then $D$ is an AVFD.

\smallskip

Throughout this paper, let $\mathbb{N}$, $\mathbb{N}_0$ and $\mathbb{Z}$ denote the set of all positive integers, the set of all nonnegative integers and the set of all integers, respectively. For $x,y\in\mathbb{Z}$ with $x\leq y$, let $[x,y]=\{z\in\mathbb{Z}\mid x\leq z\leq y\}$.

\subsection{Definitions related to star operations}

We first review some definitions related to the $t$-operation which are needed for fully understanding this paper. Let $D$ be an integral domain with quotient field $K$. A $D$-submodule $A$ of $K$ is called a {\em fractional ideal} of $D$ if $dA\subseteq D$ for some nonzero $d\in D$. An (integral) ideal of $D$ is a fractional ideal of $D$ that is contained in $D$. Let $F(D)$ (resp., $f(D)$) be the set of nonzero fractional (resp., nonzero finitely generated fractional) ideals of $D$. For $A,B\in F(D)$, let $(A:B)=\{x\in K\mid xB\subseteq A\}$ and $A^{-1}=(D:A)$. Observe that $(A:B)\in F(D)$ and $A^{-1}\in F(D)$.

\smallskip

Recall that a map $*:F(D)\rightarrow F(D), I\mapsto I_*$ is called a {\em star operation} on $D$ if the following conditions are satisfied for all $A,B\in F(D)$ and nonzero $c\in K$:

\begin{itemize}
\item $A\subseteq A_*=(A_*)_*$.
\item If $A\subseteq B$, then $A_*\subseteq B_*$.
\item $(cA)_*=cA_*$.
\item $D_*=D$.
\end{itemize}

\noindent
Let $*$ be a star operation on $D$. We say that $*$ is of {\em finite type} if for each $A\in F(D)$, $A_*=\bigcup_{C\in f(D),C\subseteq A} C_*$. Besides that, $*$ is said to be {\em stable} if $(A\cap B)_*=A_*\cap B_*$ for all $A,B\in F(D)$. If $*_1$ and $*_2$ are star operations on $D$, we mean by $*_1\leq *_2$ that $I_{*_1}\subseteq I_{*_2}$ for any $I\in F(D)$.

\smallskip

Let $*_f:F(D)\rightarrow F(D)$ be defined by $A_{*_f}=\bigcup_{C\in f(D),C\subseteq A} C_*$ for each $A\in F(D)$. Then $*_f$ is a star operation of finite type on $D$. Let $\widetilde{*}:F(D)\rightarrow F(D)$ be defined by $A_{\widetilde{*}}=\{x\in K\mid xJ\subseteq A$ for some $J\in f(D)$ with $J_*=D\}$ for each $A\in F(D)$. Then $\widetilde{*}$ is a stable star operation of finite type on $D$ \cite[Example 2.1 and Remark 2.3]{fjs03}. If we set

\begin{itemize}
\item $A_v=(A^{-1})^{-1}$,
\item $A_t=\bigcup\{I_v\mid I\subseteq A$ and $I\in f(D)\}$,
\item $A_w=\{x\in K\mid xJ\subseteq A$ for some $J\in f(D)$ with $J_v=D\}$, and
\item $A_d=A$
\end{itemize}

\noindent
for each $A\in F(D)$, then $v$ is a star operation on $D$, $t=v_f$, $w=\widetilde{v}$, and $d$ is a star operation on $D$ such that $d=d_f=\widetilde{d}$. It is known that $d\leq *\leq v$, $d\leq\widetilde{*}\leq *_f\leq t\leq v$, and $\widetilde{*}\leq w$ for any star operation $*$ on $D$.

\smallskip

An $I\in F(D)$ is called a {\em fractional $*$-ideal} of $D$ if $I_*=I$ and a fractional $*$-ideal $I$ of $D$ is called a {\em $*$-ideal} if $I\subseteq D$. A $*$-ideal is a {\em maximal $*$-ideal} if it is maximal among the proper $*$-ideals. Let $*$-${\rm Max}(D)$ be the set of maximal $*$-ideals of $D$ and let $*$-${\rm Spec}(D)$ be the set of prime $*$-ideals of $D$. It may happen that $*$-${\rm Max}(D)=\emptyset$ even though $D$ is not a field; for example, if $D$ is a rank-one nondiscrete valuation domain and $*=v$, then $*$-${\rm Max}(D)=\emptyset$. However, $*_f$-${\rm Max}(D)\neq\emptyset$ if and only if $D$ is not a field; each maximal $*_f$-ideal of $D$ is a prime ideal; $*_f$-${\rm Max}(D)\subseteq *_f$-${\rm Spec}(D)$; each proper $*_f$-ideal of $D$ is contained in a maximal $*_f$-ideal. Each prime ideal of $D$ minimal over a $*_f$-ideal is a $*_f$-ideal, whence each height-one prime ideal is a $*_f$-ideal; $D=\bigcap_{P\in *_f\textnormal{-}{\rm Max}(D)} D_P$; $I_{\widetilde{*}}=\bigcap_{M\in *_f\textnormal{-}{\rm Max}(D)} I_P$ for all $I\in F(D)$; and $*_f$-${\rm Max}(D)=\widetilde{*}$-${\rm Max}(D)$; see, for example, \cite[Lemma 2.1(2)]{or20}.

\smallskip

An integral domain $D$ is said to be of {\em finite $*$-character} if each nonzero nonunit of $D$ is contained in only finitely many maximal $*$-ideals. Recall from \cite{xaz22} that $D$ is a {\em $*$-${\rm h}$-local domain} if $D$ is of finite $*$-character and every nonzero prime $*$-ideal of $D$ is contained in a unique maximal $*$-ideal of $D$. Note that $D$ is $t$-${\rm h}$-local if and only if $D$ is weakly Matlis \cite[page 8]{az19} and $D$ is $d$-${\rm h}$-local if and only if $D$ is ${\rm h}$-local \cite[page 8]{az19}. The $*$-dimension of $D$ is defined by $*$-$\dim(D)=\sup\{n\in\mathbb{N}_0\mid P_1\subsetneq\cdots\subsetneq P_n$ for some prime $*$-ideals $P_i$ of $D\}$. Hence, $*$-$\dim(D)=1$ if and only if $D$ is not a field and each prime $*$-ideal of $D$ is a maximal $*$-ideal.

\smallskip

An $I\in F(D)$ is said to be {\em $*$-invertible} if $(II^{-1})_*=D$ and a fractional $*$-ideal $I$ of $D$ is said to be {\em $*$-finite} if $I=J_*$ for some $J\in f(D)$. We say that $D$ is a {\em Pr\"ufer $*$-multiplication domain} (P$*$MD) if each nonzero finitely generated ideal of $D$ is $*_f$-invertible. It is known that $D$ is a P$*$MD if and only if $D_P$ is a valuation domain for all maximal $*_f$-ideals $P$ of $D$ \cite[Theorem 3.1]{fjs03}. A Pr\"ufer domain is a P$v$MD whose maximal ideals are $t$-ideals. Let $T_*(D)$ be the set of $*$-invertible fractional $*$-ideals. Then $T_*(D)$ is an abelian group under $I*J=(IJ)_*$. Let ${\rm Inv}(D)$ (resp., ${\rm Prin}(D)$) be the subgroup of $T(D)$ of invertible (resp., nonzero principal) fractional ideals of $D$. The factor group ${\rm Cl}_*(D)=T_*(D)/{\rm Prin}(D)$, called the {\em $*$-class group} of $D$, is an abelian group and ${\rm Pic}(D)={\rm Inv}(D)/{\rm Prin}(D)$, called the {\em Picard group} of $D$, is a subgroup of ${\rm Cl}_*(D)$.

\smallskip

Let $S$ be a subset of $D$. Then $S$ is called {\em multiplicatively closed} if $1\in S$ and $xy\in S$ for all $x,y\in S$. It is clear that if $S$ is a multiplicatively closed set with $0\in S$, then $D_S\simeq\{0\}$, so we always assume that $0\not\in S$. Furthermore, $S$ is said to be {\em divisor-closed} or {\em saturated} if for all $x,y\in D$ with $xy\in S$, it follows that $x,y\in S$. If $a,b\in D$, then we write $a\mid_D b$ if there is some $c\in D$ such that $b=ac$. Hence, $S$ is divisor-closed if and only if $a\mid_D b$ implies $a\in S$ for any $b\in S$ and $a\in D$.

\smallskip

Let $D[X]$ be the polynomial ring over $D$. For a polynomial $f\in D[X]$, let $c(f)$ denote the ideal of $D$ generated by the coefficients of $f$. Let $N_v=\{f\in D[X]\mid f\neq 0$ and $c(f)_v=D\}$. Then, by the Dedekind-Mertens lemma, it can be shown that $N_v$ is a multiplicatively closed and divisor-closed subset of $D[X]$.

\subsection{Importance of star operations}

Next we give a short explanation for considering arbitrary star operations (of finite type) rather than the $d$-operation, the $t$-operation and the $w$-operation. First of all, star operations enable us to unify results that hold for all of the aforementioned operations. Besides that, we will see later that certain results are specific to the $t$-operation/$w$-operation, while other results hold for arbitrary star operations of finite type. We also emphasize that similar types of concepts have been investigated for star operations \cite{az19,xaz22,z21}.

\smallskip

If not stated otherwise, then from now on $D$ is always an integral domain with quotient field $K$ and $*$ is a star operation of finite type on $D$.

\section{Valuation ideals and valuation elements}

We first discuss briefly how to extend the star operation $*$ on $D$ to arbitrary nonzero $D$-submodules of $K$. For a nonzero $D$-submodule $A$ of $K$, let $A_*=\bigcup_{J\in F(D),J\subseteq A} J_*$, then $A_*=\bigcup_{J\in f(D),J\subseteq A} J_*$ because $*$ is of finite type, and hence $A_*$ is well-defined. Next we present a few simple properties of the $*$-closure of $D$-submodules of $K$ that we will use without further mention.

\begin{lemma}\label{Lemma 2.1}
Let $D$ be an integral domain with quotient field $K$, let $*$ be a star operation of finite type on $D$, let $c\in K$ be nonzero and let $A$ and $B$ be nonzero $D$-submodules of $K$. Then $A_*$ is a $D$-submodule of $K$, $A\subseteq A_*$, $cA_*=(cA)_*$, if $A\subseteq B_*$, then $A_*\subseteq B_*$, and $(AB)_*=(A_*B)_*$. Moreover, if $*$ is stable, then $(A\cap B)_*=A_*\cap B_*$.
\end{lemma}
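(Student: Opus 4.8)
The plan is to establish each assertion by reducing it to the already-recalled facts about star operations of finite type, together with the explicit formula $A_* = \bigcup_{J \in f(D),\, J \subseteq A} J_*$ that defines the extension of $*$ to arbitrary nonzero $D$-submodules of $K$. I would first verify the three easy structural claims---that $A_*$ is a $D$-submodule of $K$, that $A \subseteq A_*$, and that $cA_* = (cA)_*$---and then turn to the monotonicity-type statement and the product identity, finishing with the stability claim.

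First I would check that $A_*$ is a $D$-submodule of $K$. Given $x, y \in A_*$, there exist $J_1, J_2 \in f(D)$ with $J_1, J_2 \subseteq A$, $x \in (J_1)_*$ and $y \in (J_2)_*$; setting $J = J_1 + J_2 \in f(D)$ we have $J \subseteq A$ and $(J_1)_* \cup (J_2)_* \subseteq J_*$ by monotonicity of $*$, so $x, y \in J_*$, and since $J_*$ is a fractional ideal we get $x + y \in J_* \subseteq A_*$ and $dx \in J_* \subseteq A_*$ for every $d \in D$. The inclusion $A \subseteq A_*$ is immediate: every element of $A$ lies in some principal fractional ideal $dD \subseteq A$ with $d D \in f(D)$, and $dD \subseteq (dD)_* \subseteq A_*$. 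For $cA_* = (cA)_*$ I would use that $J \mapsto cJ$ is an inclusion-preserving bijection between the finitely generated fractional ideals contained in $A$ and those contained in $cA$, combined with the star-operation axiom $(cJ)_* = cJ_*$, so that $\bigcup_{J \subseteq cA} J_* = \bigcup_{J \subseteq A} (cJ)_* = c\bigcup_{J \subseteq A} J_*$.

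For the monotonicity statement, suppose $A \subseteq B_*$. Any finitely generated $J \subseteq A$ then satisfies $J \subseteq B_*$, and since $B_* = \bigcup_{K \in f(D),\, K \subseteq B} K_*$ with $J$ finitely generated, the generators of $J$ lie in finitely many of the $K_*$, hence (taking a finite sum) in a single $K_*$ with $K \in f(D)$ and $K \subseteq B$; then $J \subseteq K_*$ gives $J_* \subseteq (K_*)_* = K_* \subseteq B_*$, and taking the union over all such $J$ yields $A_* \subseteq B_*$. The product identity $(AB)_* = (A_*B)_*$ follows by two inclusions: $(AB)_* \subseteq (A_*B)_*$ is clear from $AB \subseteq A_*B$ and the monotonicity just proved, while for the reverse inclusion it suffices, by monotonicity, to show $A_*B \subseteq (AB)_*$, and this reduces to the finitely generated case where one applies the finite-type identity $(J_*J')_* = (JJ')_*$ for $J, J' \in f(D)$ and passes to the union.

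The main obstacle I anticipate is the stability clause $(A \cap B)_* = A_* \cap B_*$ under the hypothesis that $*$ is stable, because stability is only postulated as an identity on $F(D)$ (fractional ideals), not a priori on arbitrary $D$-submodules of $K$. The inclusion $(A \cap B)_* \subseteq A_* \cap B_*$ is automatic from monotonicity. For the reverse inclusion I would take $x \in A_* \cap B_*$, produce finitely generated $J \subseteq A$ and $J' \subseteq B$ with $x \in J_* \cap (J')_*$, and then invoke the finite-type stability identity on the fractional ideals $J$ and $J'$: since $x \in J_* \cap (J')_* = (J \cap J')_*$ and $J \cap J' \in f(D)$ is contained in $A \cap B$, we obtain $x \in (A \cap B)_*$. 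The delicate point to get right is the reduction to genuinely finitely generated ideals so that the hypothesis on $*$ applies directly.
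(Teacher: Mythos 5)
Your write-up is essentially correct, and since the paper dismisses this lemma with ``well-known and straightforward to prove,'' there is no printed argument to diverge from: what you give is exactly the standard verification the authors leave to the reader. The submodule, containment, scaling, monotonicity (via directedness of the union, using that a finitely generated $J\subseteq B_*$ lands in a single $L_*$ with $L\in f(D)$, $L\subseteq B$), and product steps are all sound, and the product step correctly rests on the identity $(J_*J')_*=(JJ')_*$, which in fact holds for every star operation on $F(D)$, not only finite-type ones.

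One inaccuracy in the stability clause: you assert $J\cap J'\in f(D)$, but the intersection of two finitely generated fractional ideals need not be finitely generated (this can fail in non-coherent domains). Fortunately your argument does not need it. Stability is postulated on all of $F(D)$, and $J\cap J'$ is a nonzero fractional ideal, so $x\in J_*\cap(J')_*=(J\cap J')_*$ is legitimate as written; then, since $J\cap J'\subseteq A\cap B\subseteq (A\cap B)_*$, the monotonicity statement you already proved for arbitrary nonzero submodules yields $(J\cap J')_*\subseteq (A\cap B)_*$, hence $x\in (A\cap B)_*$. (Here one uses that $*$ being of finite type makes the extended closure agree with $*$ on $F(D)$, so no reduction to genuinely finitely generated ideals is required at this point --- contrary to what you flag as the delicate step.) You should also record the small observation that $A\cap B$ is nonzero, so that $(A\cap B)_*$ is defined: clearing denominators gives nonzero $p\in A\cap D$ and $u\in B\cap D$, whence $0\neq pu\in A\cap B$.
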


\begin{proof}
This is well-known and straightforward to prove.
\end{proof}

Let $*$ be a star operation of finite type on $D$. Following \cite{e19}, we say that an overring $D^{\prime}$ of $D$ is {\em $*$-closed} if $(D^{\prime})_*=D^{\prime}$.

\begin{lemma}\label{Lemma 2.2}
Let $D$ be an integral domain, let $*$ be a stable star operation of finite type on $D$ and let $I$ be a valuation $*$-ideal of $D$. Then there exists a $*$-closed valuation overring $W$ of $D$ such that $IW\cap D=I$.
\end{lemma}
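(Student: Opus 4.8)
The plan is to replace the given valuation overring by its $*$-closure and to check that this closure still works. So let $V$ be a valuation overring of $D$ with $IV\cap D=I$ (such a $V$ exists because $I$ is a valuation ideal), and set $W=V_*$. Since $*$ is stable and of finite type we have $*=\widetilde{*}$, so the localization formula $A_*=\bigcap_{M\in\,*\text{-}{\rm Max}(D)}AD_M$ holds for every nonzero $D$-submodule $A$ of $K$ (it is stated in the excerpt for fractional ideals and extends to submodules via the finite-type description of $\widetilde{*}$, since a conductor $(V:_D x)$ that avoids every maximal $*$-ideal has $*$-closure $D$). Applying it to $A=V$ gives $W=\bigcap_{M}V_M$, where I write $V_M=VD_M=V_{D\setminus M}$.

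First I would show that $W$ is a valuation overring of $D$. Each $V_M$ is a localization of the valuation domain $V$, hence an overring of $V$ and therefore itself a valuation domain with quotient field $K$. The crucial observation is that all overrings of the fixed valuation domain $V$ are totally ordered by inclusion, so $\{V_M\}_{M}$ is a chain; and an intersection of a chain of valuation overrings is again a valuation overring. This is the step I expect to be the main point: for a general star operation an intersection of valuation rings is only a Pr\"ufer-type ring, and it is precisely the comparability of the $V_M$, coming from their all being overrings of the single ring $V$, that forces $W$ to be a valuation domain. That $W$ is $*$-closed then follows by applying the same localization formula to the overring $W$: from $W\subseteq V_N$ and $V_ND_N=V_N$ one gets $W_*=\bigcap_N WD_N\subseteq\bigcap_N V_N=W$, while $W\subseteq W_*$ always, hence $W_*=W$.

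It remains to verify $IW\cap D=I$. The inclusion $I\subseteq IW\cap D$ is immediate from $D\subseteq W$. For the reverse inclusion I would localize the identity $IV\cap D=I$: since localization commutes with finite intersections of submodules, intersecting at $D\setminus M$ yields $IV_M\cap D_M=I_M$, and hence $IV_M\cap D=I_M\cap D$. As $W\subseteq V_M$ for every $M$, we obtain $IW\cap D\subseteq\bigcap_M(IV_M\cap D)=\bigcap_M(I_M\cap D)=\big(\bigcap_M I_M\big)\cap D=I_*\cap D=I$, where the last two equalities use that $I$ is a $*$-ideal. This computation also shows $W\neq K$ when $I$ is proper, so $W$ is a proper valuation overring. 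The only remaining checks are the routine facts that $W$ is a subring of $K$ containing $D$ and the standard behavior of localization, which I would dispatch quickly.
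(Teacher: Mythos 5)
Your proof is correct, and while it opens with the same move as the paper (pass from $V$ to $W=V_*$), the verification runs along a genuinely different track. The paper's proof is a two-line abstract computation: since $*$ is stable, $IW\cap D\subseteq (IW)_*\cap D=(IV)_*\cap D_*=(IV\cap D)_*=I_*=I$, and $W$ is a valuation overring simply because it is a ring containing the valuation domain $V$ (any overring of a valuation domain with quotient field $K$ is again a valuation domain). You instead invoke the identity $*=\widetilde{*}$ for stable star operations of finite type and work through the local description $A_*=\bigcap_{M\in *\text{-}{\rm Max}(D)}AD_M$, extended to submodules via your conductor argument (which is sound: for $x\in\bigcap_M AD_M$ the conductor $(A:_D x)$ meets $D\setminus M$ for every $M$, so its $*$-closure is $D$ by finite type). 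This buys you a concrete picture of $W$ as an intersection of the chain of overrings $VD_M$ and lets you check $IW\cap D=I$ and $*$-closedness of $W$ purely locally; the cost is extra machinery the paper avoids, since the paper never needs $*=\widetilde{*}$ and handles everything with the stability axiom $(A\cap B)_*=A_*\cap B_*$ alone. One remark on what you flag as ``the main point'': the comparability of the rings $VD_M$ is not actually needed to see that $W$ is a valuation domain. Your own representation shows $W$ is a ring and $V\subseteq VD_M$ for every $M$ gives $V\subseteq W$, and then for any $x\in K^\times$ either $x\in V\subseteq W$ or $x^{-1}\in V\subseteq W$ — precisely the paper's shortcut. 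So the chain argument, though correct, is a detour; the comparability of overrings of $V$ becomes essential only when one intersects valuation rings that do \emph{not} all contain a common valuation domain, which is not the situation here.
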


\begin{proof}
There exists a valuation overring $V$ of $D$ such that $IV\cap D=I$. Set $W=V_*$. Observe that $W$ is a $*$-closed overring of $V$. Consequently, $W$ is a valuation overring of $D$ (since $W$ is an overring of a valuation domain) and $I\subseteq IW\cap D\subseteq (IW)_*\cap D=(IV)_*\cap D_*=(IV\cap D)_*=I_*=I$ by Lemma~\ref{Lemma 2.1}. Thus, $IW\cap D=I$.
\end{proof}

Let $D^{\times}$ denote the {\em unit group} of $D$ and let ${\rm Spec}(D)$ denote the {\em set of prime ideals} of $D$. For each ideal $I$ of $D$, let $\mathcal{P}(I)$ be the {\em set of minimal prime ideals} of $I$ (i.e., the set of prime ideals of $D$ that are minimal over $I$) and let $\sqrt{I}=\sqrt[D]{I}=\bigcap_{P\in {\rm Spec}(D),I\subseteq P} P=\{x\in D\mid x^n\in I$ for some $n\in\mathbb{N}\}$ be the {\em radical} of $I$.

\begin{proposition}\label{Proposition 2.3}
Let $D$ be an integral domain that is not a field, let $*$ be a star operation of finite type on $D$ and let $I$ be a valuation $*$-ideal of $D$\textnormal{:}
\begin{enumerate}
\item There exists some $M\in *$-${\rm Max}(D)$ such that $I_M\cap D=I$.
\item If $I$ is proper, then $\sqrt{I}$ is a prime $*$-ideal of $D$.
\end{enumerate}
\end{proposition}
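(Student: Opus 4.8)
The plan is to prove the two parts independently: part (2) by pushing the radical up to the valuation overring, and part (1) by a localization-at-the-center argument after choosing the valuation carefully.

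For (2) I would fix a valuation overring $V$ with $IV\cap D=I$. Since an $x\in D$ lies in $I=IV\cap D$ precisely when $x\in IV$, one gets $\sqrt{I}=D\cap\sqrt{IV}$. In a valuation domain the ideals are totally ordered, so $\sqrt{IV}$ is a prime ideal of $V$, and hence its contraction $\sqrt{I}=D\cap\sqrt{IV}$ is a prime ideal of $D$. Having a prime radical forces $\mathcal{P}(I)=\{\sqrt{I}\}$ (any $P\in\mathcal{P}(I)$ satisfies $I\subseteq\sqrt{I}\subseteq P$, so $P=\sqrt{I}$ by minimality), and because $*=*_f$ every prime minimal over the $*$-ideal $I$ is a $*$-ideal. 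Thus $\sqrt{I}$ is a prime $*$-ideal.

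For (1), the case $I=D$ is trivial (a maximal $*$-ideal exists since $D$ is not a field), so assume $I$ proper. The routine ingredient is a localization principle: if $V$ realizes $I$ with center $P=m_V\cap D$ and $M$ is \emph{any} maximal $*$-ideal with $P\subseteq M$, then $I_M\cap D=I$. Indeed, every $s\in D\setminus M\subseteq D\setminus P$ is a unit of $V$, so $sx\in I$ with $x\in D$ gives $x=s^{-1}(sx)\in IV\cap D=I$; hence $(I:_D s)=I$ for all such $s$, which is exactly $I_M\cap D=I$. So the whole problem reduces to producing a valuation overring realizing $I$ whose center is contained in a maximal $*$-ideal. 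This is the heart of the matter, and the place I expect real work: the center need not be a $*$-ideal (valuation ideals are not primary, so one cannot simply localize at $\sqrt{I}$), and the center of an arbitrary realizing valuation can even be $*$-dense.

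The device I would use to overcome this is the stable operation $\widetilde{*}$. Since $\widetilde{*}\le *$ and $I$ is a $*$-ideal, $I=I_{\widetilde{*}}$, so $I$ is a valuation $\widetilde{*}$-ideal; applying Lemma~\ref{Lemma 2.2} to the stable finite-type operation $\widetilde{*}$ yields a $\widetilde{*}$-closed valuation overring $W$ with $IW\cap D=I$. I then claim its center $P=m_W\cap D$ is a $\widetilde{*}$-ideal, whence (it is proper, as $I\subseteq P$) it lies in a maximal $\widetilde{*}$-ideal $M$, and since $\widetilde{*}$-${\rm Max}(D)=*$-${\rm Max}(D)$ the localization principle concludes. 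For the claim, stability gives $P_{\widetilde{*}}=(m_W)_{\widetilde{*}}\cap D$, and $(m_W)_{\widetilde{*}}$ is a $W$-submodule of $W_{\widetilde{*}}=W$ containing $m_W$, hence equals $m_W$ or $W$. The crux is ruling out $(m_W)_{\widetilde{*}}=W$: if it held there would be a finitely generated $J_0\subseteq m_W$ with $(J_0)_{\widetilde{*}}=D$, and clearing denominators gives a finitely generated integral ideal $J=dJ_0\subseteq d\,m_W$ with $(J)_{\widetilde{*}}=(d)$. Now $JW$ is principal (a finitely generated ideal of a valuation domain) and therefore $\widetilde{*}$-closed since $W$ is, so $JW\cap D$ is a $\widetilde{*}$-ideal containing $J$; thus $d\in(J)_{\widetilde{*}}\subseteq JW\cap D\subseteq JW\subseteq d\,m_W$, forcing $1\in m_W$, a contradiction. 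Hence $(m_W)_{\widetilde{*}}=m_W$, $P$ is a $\widetilde{*}$-ideal, and (1) follows.
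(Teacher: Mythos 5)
Your proof is correct and takes essentially the same route as the paper: pass to the stable operation $\widetilde{*}$, invoke Lemma~\ref{Lemma 2.2} to get a $\widetilde{*}$-closed valuation overring $W$ with $IW\cap D=I$, show the center $P=(W\setminus W^{\times})\cap D$ is a prime $\widetilde{*}$-ideal, and localize at a maximal $\widetilde{*}$-ideal ($=$ maximal $*$-ideal) containing $P$ via $D_M\subseteq W$, with part (2) likewise obtained by contracting the prime $\sqrt{IV}$. The only divergence is in the sub-step that the maximal ideal of $W$ is $\widetilde{*}$-closed — the paper restricts $\widetilde{*}$ to a finite-type star operation on $W$ and cites that the only such operation on a valuation domain is $d$, while you verify the special case directly via the dichotomy and clearing denominators; your version is sound (note only that $1\in (J_0)_{\widetilde{*}}$ gives $(J_0)_{\widetilde{*}}\supseteq D$ rather than equality, but your chain uses only $d\in (dJ_0)_{\widetilde{*}}$, so nothing is lost).
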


\begin{proof}
Observe that $\widetilde{*}$ is a stable star operation of finite type such that $J_{\widetilde{*}}\subseteq J_*$ for each $J\in F(D)$ and $*$-${\rm Max}(D)=\widetilde{*}$-${\rm Max}(D)$. Hence, $I$ is a valuation $\widetilde{*}$-ideal of $D$. Thus, $IW\cap D=I$ for some $\widetilde{*}$-closed valuation overring $W$ of $D$ by Lemma~\ref{Lemma 2.2}.

(1) Set $P=(W\setminus W^{\times})\cap D$. We show that $P$ is a prime $\widetilde{*}$-ideal of $D$. Clearly, $P$ is a prime ideal of $D$ (since $W\setminus W^{\times}$ is a prime ideal of $W$). Since $W$ is $\widetilde{*}$-closed, it is an easy consequence of Lemma~\ref{Lemma 2.1} that $*_1:F(W)\rightarrow F(W)$ defined by $A_{*_1}=A_{\widetilde{*}}$ for each $A\in F(W)$ is a star operation on $W$. It is straightforward to prove that $\ast_1$ is of finite type. Since $W$ is a valuation domain, the only star operation of finite type on $W$ is the $d$-operation, and thus $A_{\widetilde{*}}=A_{*_1}=A$ for each $A\in F(W)$. It follows that $W\setminus W^{\times}$ is $\widetilde{*}$-closed, and hence $P$ is a $\widetilde{*}$-ideal of $D$.

Since $P$ is a prime $\widetilde{*}$-ideal of $D$ and $\widetilde{*}$ is of finite type, there exists some $M\in\widetilde{*}$-${\rm Max}(D)$ such that $P\subseteq M$. Observe that $M\in *$-${\rm Max}(D)$. Since $D\setminus M\subseteq D\setminus P=D\setminus (W\setminus W^{\times})=W^{\times}\cap D\subseteq W^{\times}$, we have that $D_M\subseteq D_P\subseteq W$. Consequently, $I\subseteq I_M\cap D\subseteq IW\cap D=I$, and hence $I_M\cap D=I$.

(2) Let $I$ be proper. Since $*$ is of finite type, it is clear that $\sqrt{I}$ is a $*$-ideal of $D$. Since $W$ is a valuation domain, we have that $\sqrt{IW}$ is a prime ideal of $W$. Since $\sqrt{I}=\sqrt{IW\cap D}=\sqrt{IW}\cap D$, we infer that $\sqrt{I}$ is a prime ideal of $D$.
\end{proof}

\begin{corollary}\label{Corollary 2.4}
Let $D$ be an integral domain and let $I$ be a proper $t$-invertible valuation $t$-ideal of $D$. Then there exists a unique $M\in t$-${\rm Max}(D)$ such that $I\subseteq M$.
\end{corollary}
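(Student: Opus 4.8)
The plan is to read off existence from Proposition~\ref{Proposition 2.3} and then to reduce uniqueness to the radical, where the combination of $t$-invertibility and the valuation hypothesis has to be used. Applying Proposition~\ref{Proposition 2.3}(1) with $*=t$ gives some $M_0\in t$-${\rm Max}(D)$ with $I_{M_0}\cap D=I$; the proof of that proposition produces a valuation overring $W$ (via Lemma~\ref{Lemma 2.2}) with $IW\cap D=I$, and since $I$ is proper we have $IW\subsetneq W$, so $IW\subseteq W\setminus W^{\times}$ and thus $I\subseteq(W\setminus W^{\times})\cap D\subseteq M_0$. This settles existence. By Proposition~\ref{Proposition 2.3}(2), $P:=\sqrt I$ is a prime $t$-ideal of $D$, and a maximal $t$-ideal contains $I$ if and only if it contains $P$; hence it suffices to prove that $P$ lies in a unique maximal $t$-ideal.

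The first ingredient I would assemble is the local principality coming from $t$-invertibility: from $(II^{-1})_t=D$ one has $II^{-1}\not\subseteq M$ for every $M\in t$-${\rm Max}(D)$, so $II^{-1}D_M=D_M$ and therefore $ID_M$ is invertible, hence principal, in the local ring $D_M$. Writing $ID_M=aD_M$ we get $PD_M=\sqrt{I}D_M=\sqrt{aD_M}$, so locally $P$ is the radical of a principal ideal. I would also use the standard identity $I=\bigcap_M ID_M$, the intersection ranging over all maximal $t$-ideals $M$, valid since $I$ is a $t$-ideal. These facts are routine; the real content is the uniqueness.

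The hard part will be uniqueness, and here the valuation-ideal hypothesis is essential rather than decorative: $t$-invertibility together with primeness of the radical does not by itself force a unique maximal $t$-ideal, since in a suitable Pr\"ufer pullback a height-one prime $P$ can be the radical of a principal (hence $t$-invertible) ideal while being contained in two maximal ($t$-)ideals, the escape being that such a principal ideal is then not a valuation ideal. To make the valuation hypothesis bite I would keep the overring $W$ with $I=IW\cap D$ and use that $IW$ is principal, say $IW=aW$ with $a\in I$, so that $I=\{x\in D\mid v(x)\ge v(a)\}$ for the valuation $v$ of $W$. The aim is then to show that every maximal $t$-ideal $M$ with $I\subseteq M$ satisfies $D_M\subseteq W$, equivalently that the center $(W\setminus W^{\times})\cap D$ of $W$ is contained in $M$; granting this, comparing the principal generators $ID_M=aD_M$ through the single valuation $v$ should identify this center as a maximal $t$-ideal equal to $M_0$ and exclude any second maximal $t$-ideal over $I$.

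I expect the decisive difficulty to be precisely the control of this center. Knowing only $P=\sqrt I\subseteq M$ gives $v$ a center lying below $M$, and $v$ cannot by itself separate two maximal $t$-ideals sharing the radical $P$; what must be ruled out is an element of $(W\setminus W^{\times})\cap D$ lying outside a putative second maximal $t$-ideal $M'$. This is exactly the place where the identity $I=IW\cap D$, and not merely the $t$-local principality of $I$, has to be invoked, and I anticipate that closing this step, rather than the formal reductions above, is where the proof really lives.
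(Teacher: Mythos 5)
Your proposal does not actually prove the corollary: the uniqueness assertion, which is the entire mathematical content here (existence is trivial, since every proper $t$-ideal lies in some maximal $t$-ideal, and your elaborate route through the center of $W$ is unnecessary for it), is left as an open plan, and you say so yourself (``closing this step \ldots is where the proof really lives''). Moreover, the end-game you sketch is not a viable mechanism even if its intermediate aim were achieved. Suppose you did show that every $M\in t$-${\rm Max}(D)$ with $I\subseteq M$ contains the center $Q=(W\setminus W^{\times})\cap D$ of $W$; uniqueness still would not follow, because nothing forces $Q$ to be a maximal $t$-ideal, and two distinct maximal $t$-ideals can perfectly well share a common prime $t$-subideal. ``Comparing the principal generators $ID_M=aD_M$ through the single valuation $v$'' cannot distinguish two maximal $t$-ideals both lying over $Q$ --- a point you concede yourself when you note that $v$ cannot separate two maximal $t$-ideals sharing the radical. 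So the proposal ends exactly where the proof must begin.

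The missing idea is that you obtain, and then discard, precisely the datum that drives uniqueness: the contraction $I=I_{M_0}\cap D$ furnished by Proposition~\ref{Proposition 2.3}(1). The paper's proof is a one-liner: Proposition~\ref{Proposition 2.3}(1) combined with \cite[Lemma 4.2]{amz92}, the latter stating that a $t$-invertible $t$-ideal which is contracted from $D_{M_0}$ for some $M_0\in t$-${\rm Max}(D)$ is contained in no maximal $t$-ideal other than $M_0$. To see why the contraction is the right lever, note that it immediately yields $(I:_D s)=I$ for every $s\in D\setminus M_0$: if $sx\in I$ with $x\in D$, then, $s$ being a unit of $D_{M_0}$, we get $x\in I_{M_0}\cap D=I$. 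Thus no element outside $M_0$ can divide into $I$, and together with $t$-invertibility this is what excludes a second maximal $t$-ideal containing $I$ --- pure ideal-theoretic information about $I$ relative to $M_0$, with no further valuation theory needed. Your instinct that the valuation hypothesis is essential (and that $t$-invertibility alone would not suffice) is correct, but its entire contribution is already packaged into Proposition~\ref{Proposition 2.3}(1); after invoking that result you should quote or reprove \cite[Lemma 4.2]{amz92} rather than return to the overring $W$.
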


\begin{proof}
This is an immediate consequence of Proposition~\ref{Proposition 2.3}(1) and \cite[Lemma 4.2]{amz92}.
\end{proof}

The next example shows that the $t$-operation in Corollary~\ref{Corollary 2.4} cannot be replaced by an arbitrary star operation of finite type.

\begin{example}\label{Example 2.5}
{\em Let $D=\mathbb{Z}[X]$ be the polynomial ring over $\mathbb{Z}$ and let $\mathbb{P}$ be the set of prime numbers. Then $D$ is a two-dimensional Noetherian UFD and $X$ is a valuation element of $D$ $($since $X$ is a prime element of $D)$. Moreover, the prime ideal $XD$ is contained in infinitely many maximal ideals of $D$ $($since $(pD+XD)_{p\in\mathbb{P}}$ is a sequence of distinct maximal ideals of $D$ that contain $XD)$. In particular, $D$ is not of finite character and there exists a nonzero prime ideal of $D$ that is contained in more than one maximal ideal of $D$.}
\end{example}

A nonzero nonunit $a\in D$ is said to be {\em homogeneous} if it is contained in a unique maximal $t$-ideal of $D$. Following \cite{c21}, we say that $D$ is a {\em homogeneous factorization domain} (HoFD) if every nonzero nonunit of $D$ can be written as a finite product of homogeneous elements. Next we want to emphasize that various notions were already introduced in the literature that can be used to define the principal ideals generated by a homogeneous element. Let $a\in D$ be a nonzero nonunit. Then $a$ is homogeneous if and only if $aD$ is a unidirectional $t_{t\textnormal{-}{\rm Max}(D)}$-ideal in the sense of \cite{az99} if and only if $aD$ is a $t$-rigid ideal in the sense of \cite{hz20}.

\begin{corollary}\label{Corollary 2.6}
Let $D$ be an integral domain\textnormal{:}
\begin{enumerate}
\item If $a\in D$ is a valuation element, then $a$ is homogeneous.
\item If $D$ is a VFD, then $D$ is an integrally closed HoFD.
\end{enumerate}
\end{corollary}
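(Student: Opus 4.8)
The plan is to treat the two parts separately, deriving (1) directly from Corollary~\ref{Corollary 2.4} and then feeding (1) into the factorization hypothesis to obtain (2).

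For part (1), let $a\in D$ be a valuation element. First I would record that $I=aD$ satisfies all the hypotheses of Corollary~\ref{Corollary 2.4}: it is proper (since $a$ is a nonunit); it is a valuation ideal (this is precisely the definition of a valuation element); it is a $t$-ideal (indeed $(aD)_v=aD$ for every principal ideal, so $aD$ is even a $v$-ideal); and it is $t$-invertible (a principal ideal is invertible, hence $t$-invertible). Corollary~\ref{Corollary 2.4} then yields a unique $M\in t$-${\rm Max}(D)$ with $aD\subseteq M$, equivalently a unique maximal $t$-ideal containing $a$, which is exactly the statement that $a$ is homogeneous. For the HoFD assertion in part (2), I would simply combine this with the definition of a VFD: every nonzero nonunit $b$ of $D$ factors as $b=v_1\cdots v_m$ with each $v_i$ a valuation element, and by (1) each $v_i$ is homogeneous, so $b$ is a product of homogeneous elements and $D$ is a HoFD.

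The remaining (and only nonroutine) task is to show that a VFD $D$ is integrally closed. The key observation I would isolate is the following: if $v$ is a valuation element with valuation overring $V$ satisfying $vV\cap D=vD$, and $p\in D$ is such that $p/v$ is integral over $D$, then $p\in vD$. Indeed, a valuation domain is integrally closed and contains $D$, so $p/v\in V$, whence $p=(p/v)v\in vV\cap D=vD$. With this in hand I would prove $\overline{D}=D$ (where $\overline{D}$ denotes the integral closure of $D$ in $K$) by a peeling argument. Take $z\in\overline{D}$ and write $z=p/q$ with $p,q\in D$ and $q\neq 0$; if $q$ is a unit we are done, and otherwise factor $q=v_1\cdots v_m$ into valuation elements. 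Since $z$ is integral over $D$ and $\overline{D}$ is a ring, $z(v_2\cdots v_m)=p/v_1$ is integral over $D$, so the key observation gives $p\in v_1D$, say $p=v_1p_1$ with $p_1\in D$. Replacing $z$ by the expression $p_1/(v_2\cdots v_m)$ and iterating, after $m$ steps I obtain $p\in v_1\cdots v_mD=qD$, that is, $z\in D$.

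The part I expect to require the most care is the integral-closedness argument. The pitfall to avoid is the temptation to prove that each localization $D_M$ at a maximal $t$-ideal is a valuation domain (which need not hold for a general VFD, so this would amount to claiming $D$ is a P$v$MD); instead one must exploit directly the single valuation overring attached to each factor via the defining relation $vV\cap D=vD$. Once the peeling step is set up, the induction is routine, and the only bookkeeping is to check at each stage that the relevant quotient is again of the form (element of $D$) divided by (the product of the remaining valuation elements) and is integral over $D$.
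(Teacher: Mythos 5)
Your proof is correct, and for most of the statement it coincides with the paper's: for part (1) the paper likewise observes that $aD$ is a (proper) $t$-invertible valuation $t$-ideal and invokes Corollary~\ref{Corollary 2.4}, and for the HoFD half of part (2) it likewise factors a nonzero nonunit into valuation elements and applies (1). The genuine divergence is integral closedness: the paper does not prove this at all, but simply cites \cite[Corollary 1.5]{cr20}, whereas you supply a self-contained argument. Your argument is sound: the key observation is exactly right (if $vV\cap D=vD$ and $p/v$ is integral over $D$ with $p\in D$, then $p/v\in V$ because the valuation overring $V$ is integrally closed and contains $D$, whence $p\in vV\cap D=vD$), and the peeling induction works because at each stage $z(v_{i+1}\cdots v_m)$ lies in the ring $\overline{D}$, so the next quotient $p_i/v_{i+1}$ is again integral over $D$; after $m$ steps $z\in D$. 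What your route buys is independence from the companion paper \cite{cr20}; what the paper's citation buys is brevity. Your cautionary remark is also well taken: nothing in the definition of a VFD forces $D_M$ to be a valuation domain at each maximal $t$-ideal, and indeed the present paper only reaches P$v$MD-type conclusions under extra hypotheses such as $t$-treedness (Theorem~\ref{Theorem 3.15}) or the Principal Ideal Theorem (Theorem~\ref{Theorem 4.8}, via \cite[Proposition 4.6]{cr20}), so arguing directly from the defining relation $vV\cap D=vD$ of each factor, as you do, is the correct move.
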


\begin{proof}
(1) Observe that $aD$ is both a valuation ideal and a $t$-invertible $t$-ideal. Hence, by Corollary~\ref{Corollary 2.4}, $aD$ is contained in a unique maximal $t$-ideal of $D$, and thus $a$ is homogeneous.

(2) A VFD is integrally closed \cite[Corollary 1.5]{cr20}. Moreover, a valuation element is homogeneous by (1). Thus, $D$ is an integrally closed HoFD.
\end{proof}

Let $D$ be a quasi-local domain of dimension one that is not a valuation domain. Then every nonzero element of $D$ is homogeneous, while $D$ is not a VFD and $D$ has no valuation element \cite[Corollary 1.6]{cr20}. Therefore, a homogeneous element need not be a valuation element and an HoFD need not be a VFD in general.

\begin{remark}\label{Remark 2.7}
{\em We want to emphasize that valuation elements are not just homogeneous elements, but they also satisfy another interesting property that was studied by Zafrullah. Following \cite{z75}, we say that a nonzero nonunit $a\in D$ is {\em rigid} if for all $b,c\in D$ with $b\mid_D a$ and $c\mid_D a$, it follows that $b\mid_D c$ or $c\mid_D b$. Moreover, $D$ is called {\em semirigid} if every nonzero nonunit of $D$ is a finite product of rigid elements. Note that every valuation element is rigid \cite[Corollary 1.2(2)]{cr20}, and hence every VFD is semirigid. For more information on rigid elements and semirigid domains, we refer to \cite{z75,z77,z22}.}
\end{remark}

In what follows we provide connections to well-known types of elements. Let $u\in D$ be a nonzero nonunit. Then $u$ is called an {\em atom} of $D$ if for all $a,b\in D$ with $u=ab$, either $a$ is a unit of $D$ or $b$ is a unit of $D$. We say that $u$ is {\em primary} if $uD$ is a primary ideal of $D$. Furthermore, $D$ is called {\em atomic} if every nonzero nonunit of $D$ is a finite product of atoms of $D$. Observe that every atom is rigid, so every atomic domain is a semirigid domain. (Let $u\in D$ be an atom of $D$ and let $a,b\in D$ be such that $a\mid_D u$ and $b\mid_D u$. If $a\in D^{\times}$, then clearly $a\mid_D b$. Now let $a\not\in D^{\times}$. There is some $c\in D$ such that $u=ac$. We infer that $c\in D^{\times}$, and hence $b\mid_D u\mid_D uc^{-1}=a$.) It is known that $D$ is a UFD if and only if $D$ is an atomic VFD \cite[Corollary 2.4]{cr20}. Also note that every primary element is homogeneous \cite[Lemma 2.1]{acp03}. For the sake of clarity, we provide the following diagram to visualize the relations between the various types of elements.

\[
\begin{tikzcd}
& \textnormal{prime}\arrow[ld,Rightarrow]\arrow[d,Rightarrow]\arrow[rd,Rightarrow] &\\
\textnormal{\quad atom\quad}\arrow[d,Rightarrow] & \textnormal{valuation}\arrow[ld,Rightarrow]\arrow[rd,Rightarrow] & \textnormal{primary}\arrow[d,Rightarrow]\\
\textnormal{rigid} & & \textnormal{homogeneous}
\end{tikzcd}
\]

\medskip

In general, a primary atom need not be a valuation element. (Let $D$ be an atomic quasi-local one-dimensional domain that is not a valuation domain and let $v\in D$ be an atom. Then $v$ is a primary element of $D$ but not a valuation element of $D$ \cite[Corollary 1.6]{cr20}.) We also want to mention that a valuation element is in general neither primary nor an atom. (Let $V$ be a two-dimensional valuation domain, let $P$ be the unique height-one prime ideal of $V$ and let $x\in P$ be nonzero. Then $x$ is a valuation element of $V$ but $x$ is neither primary nor an atom.) Note that an atom does not have to be homogeneous. (Let $D=\mathbb{Z}[\sqrt{10}]$. Then $3$ is an atom of $D$ and $3$ is not homogeneous.) Finally, a primary element need not be rigid. (Let $D$ be a quasi-local one-dimensional domain that is not a valuation domain. Then there is some nonzero nonunit $y\in D$ such that $y$ is not rigid (e.g., see \cite{z22}) and yet $y$ is primary.) In particular, we obtain that none of the implications in the diagram above can be reversed.

\medskip

Let $I$ be a $*$-ideal of $D$. We say that $I$ is {\em $*$-locally principal} if $I_M$ is a principal ideal of $D_M$ for each $M\in *$-${\rm Max}(D)$. It is easy to see that a $*$-invertible $*$-ideal of $D$ is a $*$-locally principal ideal for any star operation $*$ of finite type on $D$ \cite[12.3 Theorem]{h98}. We are going to give a $*$-locally principal ideal analog of \cite[Proposition 1.1]{cr20} that, among other things, shows that if $aR\cap D=aD$ and $bR\cap D=bD$, then $abR\cap D=abD$ for any $a,b\in D$ and an overring $R$ of $D$. We first need a lemma which is also a natural generalization of \cite[Proposition 1.1]{cr20}.

\begin{lemma}\label{Lemma 2.8}
Let $D$ be an integral domain, let $R$ be an overring of $D$, let $I$ be a nonzero principal ideal of $D$ and let $J$ be an ideal of $D$\textnormal{:}
\begin{enumerate}
\item If $IR\cap D=I$ and $JR\cap D=J$, then $(IJ)R\cap D=IJ$.
\item If $(IJ)R\cap D=IJ$, then $JR\cap D=J$.
\end{enumerate}
\end{lemma}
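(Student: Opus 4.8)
The plan is to exploit that $I$ is \emph{principal}: writing $I=aD$ for some nonzero $a\in D$, one has $IJ=aJ$, $IR=aR$ and $(IJ)R=aJR$, so that the generator $a$ can be cancelled inside the quotient field $K$. This reduces both assertions to elementary element-chasing arguments, and is exactly the feature that lets this statement refine the corresponding property for principal ideals generated by a single valuation element.

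For part (1), the inclusion $IJ\subseteq (IJ)R\cap D$ is immediate, so only the reverse inclusion needs work. I would take $x\in (IJ)R\cap D=aJR\cap D$ and write $x=aw$ with $w\in JR$; this is legitimate because $aJR$ is the $R$-module generated by $aJ$, so every element factors out the common $a$. The goal is then to show $w\in J$. First I would note $w\in JR\subseteq R$, whence $x=aw\in aR=IR$; combined with $x\in D$ and the hypothesis $IR\cap D=I=aD$, this forces $x\in aD$, and cancelling the nonzero $a$ yields $w\in D$. Then $w\in JR\cap D=J$ by the second hypothesis, so $x=aw\in aJ=IJ$, as desired.

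For part (2) the argument is even shorter and uses only that $I$ is principal together with the single hypothesis. Again $J\subseteq JR\cap D$ is clear, so I would take $x\in JR\cap D$ and multiply through by $a$: since $x\in JR$ we get $ax\in aJR=(IJ)R$, and since $a,x\in D$ we get $ax\in D$, so $ax\in (IJ)R\cap D=IJ=aJ$. Writing $ax=aj$ with $j\in J$ and cancelling $a$ gives $x=j\in J$.

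I do not anticipate a serious obstacle; both parts are routine once the generator $a$ is isolated. The only point deserving care is the step in part (1) producing $w\in D$: this is precisely where the hypothesis $IR\cap D=I$ enters (the hypothesis $JR\cap D=J$ by itself would not suffice), and it is what makes the \emph{principality} of $I$ genuinely necessary. In part (2), by contrast, the conclusion follows from cancellation alone, which explains why no hypothesis governing the interaction of $J$ with $R$ is needed there.
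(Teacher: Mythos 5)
Your proof is correct and is essentially the paper's argument rewritten in terms of the generator: where you factor $x=aw$ with $w\in JR$ and cancel the nonzero $a$ in $K$, the paper multiplies by the fractional ideal $I^{-1}$ and uses $II^{-1}=D$, so your element $w=x/a$ plays exactly the role of $xI^{-1}$ (and likewise in part (2), where the paper passes from $xI\subseteq (IJ)R\cap D=IJ$ back to $x\in J$ via $I^{-1}$ instead of cancelling $a$). One caveat about your closing remark: since the paper's formulation of the same argument uses nothing beyond $II^{-1}=D$, the lemma actually holds verbatim for any \emph{invertible} ideal $I$, so principality is ``genuinely necessary'' only for your particular factoring step $x=aw$, not for the statement itself.
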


\begin{proof}
(1) Let $IR\cap D=I$ and $JR\cap D=J$. It suffices to show that $(IJ)R\cap D\subseteq IJ$. Let $x\in (IJ)R\cap D$. Since $IJ\subseteq I$, we infer that $x\in IR\cap D=I$, and hence $xI^{-1}\subseteq II^{-1}=D$. Observe that $xI^{-1}\subseteq (IJ)RI^{-1}=((II^{-1})J)R=JR$. This implies that $xI^{-1}\subseteq JR\cap D=J$. Therefore, $x\in xD=x(II^{-1})=I(xI^{-1})\subseteq IJ$.

(2) Let $(IJ)R\cap D=IJ$. It suffices to show that $JR\cap D\subseteq J$. Let $x\in JR\cap D$. Then $xI\subseteq (IJ)R$. Since $I\subseteq D$, it follows that $xI\subseteq (IJ)R\cap D=IJ$, and hence $x\in xD=x(II^{-1})\subseteq (IJ)I^{-1}=(II^{-1})J=J$. Thus, $JR\cap D\subseteq J$.
\end{proof}

\begin{proposition}\label{Proposition 2.9}
Let $D$ be an integral domain, let $R$ be an overring of $D$, let $*$ be a star operation of finite type on $D$, let $I$ be a nonzero $*$-locally principal $*$-ideal of $D$ and let $J$ be a nonzero $*$-ideal of $D$\textnormal{:}
\begin{enumerate}
\item If $IR\cap D=I$ and $JR\cap D=J$, then $(IJ)_*R\cap D=(IJ)_*$.
\item If $(IJ)_*R\cap D=(IJ)_*$, then $JR\cap D=J$.
\item If $IR\cap D=I$, $\sqrt{I}\subseteq\sqrt{J}$ and $I$ and $J$ are $*$-invertible, then $JR\cap D=J$.
\end{enumerate}
In particular, $IR\cap D=I$ if and only if $(I^n)_*R\cap D=(I^n)_*$ for some $n\in\mathbb{N}$ if and only if $(I^n)_*R\cap D=(I^n)_*$ for each $n\in\mathbb{N}$.
\end{proposition}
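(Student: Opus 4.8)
The plan is to prove all three parts by localizing at the maximal $*$-ideals and reducing to the already-established principal-ideal case, Lemma~\ref{Lemma 2.8}. The mechanism that makes this work is that every $*$-ideal is automatically a $\widetilde{*}$-ideal: since $\widetilde{*}\leq *$ forces $A\subseteq A_{\widetilde{*}}\subseteq A_*$, the equality $A_*=A$ gives $A=A_{\widetilde{*}}$, so $A$ equals the intersection of its localizations over the maximal $*$-ideals (recall that $*$-$\mathrm{Max}(D)$ coincides with $\widetilde{*}$-$\mathrm{Max}(D)$). Thus $I=\bigcap_M I_M$, $J=\bigcap_M J_M$ and $(IJ)_*=\bigcap_M((IJ)_*)_M$, where all intersections run over $M\in *$-$\mathrm{Max}(D)$ and all localizations are at $M$. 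Since localization commutes with finite intersections and with products, each hypothesis of the form $AR\cap D=A$ localizes to $A_M R_M\cap D_M=A_M$, where $R_M=RD_M$; and because $I$ is $*$-locally principal, $I_M$ is principal in $D_M$. So in each $D_M$ we are placed exactly in the situation of Lemma~\ref{Lemma 2.8}.

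The one genuinely nontrivial point, and the main obstacle, is that $*$-closure does \emph{not} commute with localization for arbitrary finite-type star operations, so a priori $((IJ)_*)_M$ could be strictly larger than $(IJ)_M=I_M J_M$; note also that $*$-local principality does \emph{not} force $(II^{-1})_*=D$, so the naive global imitation of Lemma~\ref{Lemma 2.8} (multiplying through by $I^{-1}$) breaks down. I would therefore first prove the key claim that, \emph{when $I$ is $*$-locally principal}, one does have $((IJ)_*)_M=(IJ)_M$ for every $M\in *$-$\mathrm{Max}(D)$. For the nontrivial inclusion, take $z\in((IJ)_*)_M$; clearing the denominator and using that $*$ is of finite type, reduce to $w\in C_*$ for some finitely generated $C\subseteq IJ$, and hence to $w\in(I'J')_*$ with finitely generated $I'\subseteq I$ and $J'\subseteq J$. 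Writing $I_M=\alpha D_M$ with $\alpha\in I$, and using that the \emph{subideal} $I'$ is finitely generated, I can clear a single denominator $s'\notin M$ so that $s'I'\subseteq\alpha D$; then $s'(I'J')_*=(s'I'J')_*\subseteq(\alpha J')_*=\alpha J'_*\subseteq\alpha J$ by Lemma~\ref{Lemma 2.1}, whence $s'w\in\alpha J$ and so $z\in\alpha J_M=I_M J_M=(IJ)_M$. The finiteness of $I'$ is precisely what rescues the argument even though $I$ itself need not be finitely generated.

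With the claim in hand, parts (1) and (2) are routine localization. For (1), given $x\in(IJ)_*R\cap D$, localizing yields $x\in((IJ)_*)_M R_M\cap D_M=(IJ)_M R_M\cap D_M$; since $I_M$ is principal and $I_M R_M\cap D_M=I_M$, $J_M R_M\cap D_M=J_M$, Lemma~\ref{Lemma 2.8}(1) gives $x\in(IJ)_M$, and intersecting over all $M$ places $x$ in $\bigcap_M(IJ)_M=(IJ)_*$ (the last equality by the claim). For (2), localizing the hypothesis gives $(I_M J_M)R_M\cap D_M=I_M J_M$, so Lemma~\ref{Lemma 2.8}(2) yields $J_M R_M\cap D_M=J_M$ in each $D_M$; then $x\in JR\cap D$ localizes into each $J_M$, hence lies in $\bigcap_M J_M=J$.

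For (3) the claim is not needed, since $*$-invertibility already makes both $I_M$ and $J_M$ principal. Localizing $\sqrt{I}\subseteq\sqrt{J}$ (radicals commute with localization) gives $\sqrt{I_M}\subseteq\sqrt{J_M}$, so a generator of $I_M$ lies in $\sqrt{J_M}$ and some power satisfies $I_M^{k}=\gamma D_M\cdot J_M$ for a principal ideal $\gamma D_M$, with $k=k(M)$. From $I_M R_M\cap D_M=I_M$ and Lemma~\ref{Lemma 2.8}(1) by induction, $I_M^{k}R_M\cap D_M=I_M^{k}$; rewriting this as $(\gamma D_M\cdot J_M)R_M\cap D_M=\gamma D_M\cdot J_M$ and applying Lemma~\ref{Lemma 2.8}(2) gives $J_M R_M\cap D_M=J_M$, and intersecting over $M$ finishes. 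Finally, the displayed ``in particular'' follows from (1) and (2) by induction: taking $J=(I^n)_*$ and using $(I(I^n)_*)_*=(I^{n+1})_*$, part (1) upgrades $IR\cap D=I$ to $(I^{n+1})_*R\cap D=(I^{n+1})_*$, while part (2) runs the same identity backwards to descend from any fixed $n$ down to $n=1$.
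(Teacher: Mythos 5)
Your proof is correct. For the central identity $((IJ)_*)_M=I_MJ_M$ and for parts (1) and (2) you take essentially the paper's route: the paper likewise proves this local identity first, by taking $x\in A_*$ with $A\in f(D)$, $A\subseteq IJ\subseteq I_MJ_M=yJ_M$, and clearing a single denominator $b\in D\setminus M$ with $bA\subseteq yJ$, so that $x\in A_*=b^{-1}(bA)_*\subseteq b^{-1}(yJ)_*=b^{-1}yJ\subseteq I_MJ_M$; your detour through $C\subseteq I'J'$ with $I'$ finitely generated is just different bookkeeping for the same denominator-clearing, and your localization-plus-Lemma~\ref{Lemma 2.8} steps in (1) and (2) coincide with the paper's. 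Part (3) is where you genuinely diverge. The paper argues globally: since $*$ is of finite type and $I$ is a $*$-invertible $*$-ideal, $I$ is $*$-finite, so $\sqrt{I}\subseteq\sqrt{J}$ yields a single exponent $n$ with $(I^n)_*\subseteq J$, hence $(I^n)_*=(JL)_*$ for some $*$-invertible $*$-ideal $L$, and then parts (1) and (2) finish. You instead work locally with an exponent $k=k(M)$ allowed to depend on $M$, factor $I_M^{k}=\gamma D_M\cdot J_M$, and apply Lemma~\ref{Lemma 2.8} twice inside $D_M$. Your variant buys a small generalization: it never uses the $*$-invertibility (equivalently, $*$-finiteness) of $I$, only the standing $*$-local principality, while $*$-invertibility of $J$ is still genuinely needed to make $J_M$ principal; the paper's variant buys economy, reducing (3) to the two parts already proved. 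Finally, your explicit treatment of the ``in particular'' clause (ascent via (1) and $(I(I^n)_*)_*=(I^{n+1})_*$, descent via (2) applied with the $*$-locally principal factor $I$ itself) is correct and fills in what the paper leaves implicit; applying (2) with $I$ rather than with $(I^{n-1})_*$ neatly avoids having to verify that the latter is $*$-locally principal.
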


\begin{proof}
First we show that $((IJ)_*)_M=I_MJ_M$ for each $M\in *$-${\rm Max}(D)$. Let $M\in *$-${\rm Max}(D)$ and let $x\in (IJ)_*$. There is some nonzero $y\in I$ such that $I_M=yD_M$. Observe that $x\in A_*$ for some nonzero finitely generated ideal $A$ of $D$ with $A\subseteq IJ\subseteq I_MJ_M=yJ_M$. We have that $bA\subseteq yJ$ for some $b\in D\setminus M$, and thus $x\in A_*=b^{-1}(bA)_*\subseteq b^{-1}(yJ)_*=b^{-1}yJ\subseteq yJ_M=I_MJ_M$. This implies that $(IJ)_*\subseteq I_MJ_M$, and hence $((IJ)_*)_M=I_MJ_M$.

(1) Let $IR\cap D=I$, let $JR\cap D=J$ and let $x\in (IJ)_*R\cap D$. Since $*$ is of finite type, it is sufficient to show that $x\in ((IJ)_*)_M$ for each $M\in *$-${\rm Max}(D)$. Let $M\in *$-${\rm Max}(D)$. Then $x\in ((IJ)_*R\cap D)_M=((IJ)_*)_MR_M\cap D_M=I_MJ_MR_M\cap D_M=I_MJ_M=((IJ)_*)_M$, where the third equality holds by Lemma~\ref{Lemma 2.8}(1) (since $I_M$ is a principal ideal of $D_M$, $I_M=(IR\cap D)_M=I_MR_M\cap D_M$, $J_M=(JR\cap D)_M=J_MR_M\cap D_M$ and $R_M$ is an overring of $D_M$).

(2) Let $(IJ)_*R\cap D=(IJ)_*$ and let $x\in JR\cap D$. Since $*$ is of finite type, it remains to show that $x\in J_M$ for each $M\in *$-${\rm Max}(D)$. Let $M\in *$-${\rm Max}(D)$. Then $x\in JR\cap D\subseteq (JR\cap D)_M=J_MR_M\cap D_M$ and $I_MJ_MR_M\cap D_M=((IJ)_*)_MR_M\cap D_M=((IJ)_*R\cap D)_M=((IJ)_*)_M=I_MJ_M$. Consequently, $x\in J_MR_M\cap D_M=J_M$ by Lemma~\ref{Lemma 2.8}(2) (since $I_M$ is a principal ideal and $R_M$ is an overring of $D_M$).

(3) Let $IR\cap D=I$, let $\sqrt{I}\subseteq\sqrt{J}$ and let $I$ and $J$ be $*$-invertible. Since $*$ is of finite type and $I$ is a $*$-invertible $*$-ideal, we have that $I$ is $*$-finite. Since $I\subseteq\sqrt{J}$ and $I$ is $*$-finite, we infer that $(I^n)_*\subseteq J$ for some $n\in\mathbb{N}$. It follows that $(I^n)_*=(JL)_*$ for some $*$-invertible $*$-ideal $L$ of $D$ (since $J$ is a $*$-invertible $*$-ideal of $D$). It follows from (1) above that $(I^n)_*R\cap D=(I^n)_*$, and hence $(JL)_*R\cap D=(JL)_*$. Therefore, $JR\cap D=J$ by (2) above.
\end{proof}

\begin{corollary}\label{Corollary 2.10}\cite[Corollary 1.2]{cr20}.
Let $D$ be an integral domain, let $*$ be a star operation of finite type on $D$, let $I$ be a proper $*$-invertible valuation $*$-ideal of $D$ and let $J$ be a $*$-invertible $*$-ideal of $D$\textnormal{:}
\begin{enumerate}
\item If $L$ is a $*$-invertible $*$-ideal of $D$ with $I\subseteq J\cap L$, then $J$ and $L$ are comparable.
\item If $\sqrt{I}\subseteq\sqrt{J}$, then $I$ and $J$ are comparable.
\item $\bigcap_{n\in\mathbb{N}} (I^n)_*$ is a prime $*$-ideal of $D$.
\item If $\sqrt{I}\subsetneq\sqrt{J}$, then $I\subseteq\bigcap_{n\in\mathbb{N}} (J^n)_*$.
\end{enumerate}
\end{corollary}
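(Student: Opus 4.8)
The plan is to fix once and for all a valuation overring $V$ of $D$ furnished by the proof of Proposition~\ref{Proposition 2.3}(1): since $I$ is a proper $*$-invertible valuation $*$-ideal, there are a valuation overring $V$ of $D$ and some $M\in *$-${\rm Max}(D)$ with $IV\cap D=I$ and $D_M\subseteq V$. The engine for all four parts is Proposition~\ref{Proposition 2.9}. Whenever $H$ is a $*$-invertible $*$-ideal with $\sqrt{I}\subseteq\sqrt{H}$, part (3) of that proposition gives $HV\cap D=H$, and (since a $*$-invertible $*$-ideal is $*$-locally principal) its ``in particular'' clause upgrades this to $(H^n)_*V\cap D=(H^n)_*$ for every $n\in\mathbb{N}$. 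Thus membership questions about $*$-invertible $*$-ideals lying ``above'' $I$ can be transported into the valuation domain $V$, where all ideals are totally ordered.

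For parts (1) and (2) I would argue purely by comparability in $V$. In (2), from $\sqrt{I}\subseteq\sqrt{J}$ I obtain $JV\cap D=J$; as $IV$ and $JV$ are comparable in the valuation domain $V$, contracting back along $IV\cap D=I$ and $JV\cap D=J$ shows that $I$ and $J$ are comparable. In (1), the hypothesis $I\subseteq J\cap L$ yields $\sqrt{I}\subseteq\sqrt{J}$ and $\sqrt{I}\subseteq\sqrt{L}$, hence $JV\cap D=J$ and $LV\cap D=L$; comparing the extensions $JV$ and $LV$ inside $V$ and contracting gives that $J$ and $L$ are comparable.

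The heart of the matter is (3), and the obstacle is that $I$ is only $*$-locally principal, not principal, so $\bigcap_{n\in\mathbb{N}}(I^n)_*$ is not visibly an intersection of powers of a single element. I would remove this defect by passing to $V$. Since $I$ is $*$-invertible it is $*$-locally principal, so $I_M=aD_M$ for some $a$, and the localization identity established in the proof of Proposition~\ref{Proposition 2.9} gives, by induction, $((I^n)_*)_M=(I_M)^n=a^nD_M$ for all $n$. Because $D_M\subseteq V$, extending to $V$ yields $(I^n)_*V=((I^n)_*)_MV=a^nV$, whence $(I^n)_*=(I^n)_*V\cap D=a^nV\cap D$. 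Consequently $\bigcap_{n\in\mathbb{N}}(I^n)_*=\big(\bigcap_{n\in\mathbb{N}}a^nV\big)\cap D$. As $aV\subseteq IV\subsetneq V$, the element $a$ is a nonunit of $V$, so $\bigcap_{n\in\mathbb{N}}a^nV$ is a prime ideal of $V$ by the standard valuation-domain computation with values; its contraction to $D$ is therefore a prime ideal, and being an intersection of $*$-ideals it is a $*$-ideal. I expect the identity $(I^n)_*V=a^nV$ — i.e.\ turning $I$ into a genuine power of a principal ideal after extension to $V$ — to be the step requiring the most care, since it is exactly what makes the intersection behave like $\bigcap_n a^nV$.

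Finally, (4) follows formally from (2). For each $n\in\mathbb{N}$, the ideal $(J^n)_*$ is a $*$-invertible $*$-ideal with $\sqrt{(J^n)_*}=\sqrt{J}$, so $\sqrt{I}\subseteq\sqrt{(J^n)_*}$ and part (2) makes $I$ and $(J^n)_*$ comparable. Were $(J^n)_*\subseteq I$, I would get $\sqrt{J}=\sqrt{(J^n)_*}\subseteq\sqrt{I}$, contradicting $\sqrt{I}\subsetneq\sqrt{J}$; hence $I\subseteq(J^n)_*$ for every $n$, which is precisely $I\subseteq\bigcap_{n\in\mathbb{N}}(J^n)_*$.
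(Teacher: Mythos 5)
Your proof is correct, and for parts (1), (2) and (4) it is the paper's argument verbatim in outline: fix a valuation overring $V$ with $IV\cap D=I$, use Proposition~\ref{Proposition 2.9}(3) to get $JV\cap D=J$ (and $LV\cap D=L$) whenever $\sqrt{I}\subseteq\sqrt{J}$, compare extensions in $V$ and contract; (4) is the same formal deduction from (2). The genuine divergence is in the key identity of part (3), which you correctly single out as the delicate step. The paper gets it from $*$-finiteness and stability: by \cite[Lemma 2.1(3)]{or20} one may pass from $*$ to $\widetilde{*}$, Lemma~\ref{Lemma 2.2} supplies a $\widetilde{*}$-closed valuation overring $V$, and writing $I=L_{\widetilde{*}}$ with $L$ finitely generated gives $LV=aV$ and then $IV\subseteq (IV)_{\widetilde{*}}=(LV)_{\widetilde{*}}=(aV)_{\widetilde{*}}=aV$, whence $IV=aV$ and similarly $(I^n)_*V=a^nV$. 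You instead exploit $*$-local principality: taking the maximal $*$-ideal $M$ with $D_M\subseteq V$ produced inside the proof of Proposition~\ref{Proposition 2.3}(1), and the localization identity $((I^n)_*)_M=a^nD_M$ established inside the proof of Proposition~\ref{Proposition 2.9}, you get $(I^n)_*V=((I^n)_*)_MV=a^nV$ because every element of $D\setminus M$ is a unit of $V$; this computation is valid, and your observation that $aV\subseteq IV\subsetneq V$ (so $a$ is a nonunit of $V$ and $\bigcap_{n\in\mathbb{N}}a^nV$ is prime) matches the paper. The trade-off: the paper's route relies only on \emph{stated} results (Lemma~\ref{Lemma 2.2} and \cite{or20}), whereas yours imports two facts ($D_M\subseteq V$ and the formula $((IJ)_*)_M=I_MJ_M$) that occur only inside proofs, not in the statements of Proposition~\ref{Proposition 2.3}(1) and Proposition~\ref{Proposition 2.9}; in a polished write-up you should either re-derive $D_M\subseteq V$ (it needs the $\widetilde{*}$-closed $V$ of Lemma~\ref{Lemma 2.2}, so you do not actually escape that lemma) or extract both facts as standalone lemmas. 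Your route avoids the explicit appeal to $*$-finiteness of $I$, which is a mild simplification; the paper's avoids any dependence on a distinguished maximal $*$-ideal.
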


\begin{proof}
Note that the $*$-invertible $*$-ideals of $D$ are precisely the $\widetilde{*}$-invertible $\widetilde{*}$-ideals of $D$ \cite[Lemma 2.1(3)]{or20}. By Lemma~\ref{Lemma 2.2}, there is a $\widetilde{*}$-closed valuation overring $V$ of $D$ such that $IV\cap D=I$.

(1) Let $L$ be a $*$-invertible $*$-ideal of $D$ such that $I\subseteq J\cap L$. Then $JV\cap D=J$ and $LV\cap D=L$ by Proposition~\ref{Proposition 2.9}(3). Moreover, since $V$ is a valuation domain, $JV$ and $LV$ are comparable. Thus, $J$ and $L$ are comparable.

(2) Let $\sqrt{I}\subseteq\sqrt{J}$. Then $JV\cap D=J$ by Proposition~\ref{Proposition 2.9}(3). Since $IV$ and $JV$ are comparable, we have that $I$ and $J$ are comparable.

(3) Clearly, $\bigcap_{n\in\mathbb{N}} (I^n)_*$ is a $*$-ideal of $D$. Note that $(I^n)_*=(I^n)_{\widetilde{*}}$ for each $n\in\mathbb{N}$ by \cite[Lemma 2.1(3)]{or20}. There is a finitely generated ideal $L$ of $D$ with $I=L_{\widetilde{*}}$. Consequently, $LV=aV$ for some $a\in L$. Note that $aV=LV\subseteq IV\subseteq (IV)_{\widetilde{*}}=(LV)_{\widetilde{*}}=(aV)_{\widetilde{*}}=aV$, and hence $IV=aV$. Along similar lines, one can prove that $(I^n)_*V=a^nV$ for each $n\in\mathbb{N}$. It follows by Proposition~\ref{Proposition 2.9}(1) that $(I^n)_*V\cap D=(I^n)_*$ for each $n\in\mathbb{N}$. Note that $\bigcap_{n\in\mathbb{N}} a^nV$ is a prime ideal of $V$. Consequently, $\bigcap_{n\in\mathbb{N}} (I^n)_*=\bigcap_{n\in\mathbb{N}} ((I^n)_*V\cap D)=(\bigcap_{n\in\mathbb{N}} (I^n)_*V)\cap D$ is a prime ideal of $D$.

(4) Let $\sqrt{I}\subsetneq\sqrt{J}$ and let $n\in\mathbb{N}$. Then $\sqrt{I}\subseteq\sqrt{J}=\sqrt{(J^n)_*}$, and hence $I$ and $(J^n)_*$ are comparable by (2). If $(J^n)_*\subseteq I$, then $\sqrt{J}=\sqrt{(J^n)_*}\subseteq\sqrt{I}$, a contradiction. Therefore, $I\subseteq (J^n)_*$.
\end{proof}

In this paper we study integral domains $D$ in which each nonzero principal ideal can be written as a finite $*$-product of valuation $*$-ideals for a given star operation $*$ on $D$, and in this case, all of the valuation ideals in question must be $*$-invertible. The next result shows that such a finite $*$-product of $*$-invertible valuation $*$-ideals can be written in a specific form. For example, if $I$ is a finite $*$-product of $*$-invertible valuation $*$-ideals of $D$, say, $I=(\prod_{i=1}^n I_i)_*$ and each $I_i$ is a proper $*$-ideal of $D$, then $\mathcal{P}(I)\subseteq\{\sqrt{I_k}\mid k\in [1,n]\}$ and $n\geq |\mathcal{P}(I)|$.

\begin{proposition}\label{Proposition 2.11}
Let $D$ be an integral domain, let $*$ be a star operation of finite type on $D$ and let $I$ be a finite $*$-product of $*$-invertible valuation $*$-ideals of $D$. Then $|\mathcal{P}(I)|=\min\{m\in\mathbb{N}_0\mid I$ is a $*$-product of $m$ $*$-invertible valuation $*$-ideals of $D\}$ and there are $*$-invertible valuation $*$-ideals $(I(P))_{P\in\mathcal{P}(I)}$ of $D$ such that $I=(\prod_{P\in\mathcal{P}(I)} I(P))_*$ and $\sqrt{I(Q)}=Q$ for each $Q\in\mathcal{P}(I)$.
\end{proposition}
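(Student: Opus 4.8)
The plan is to reduce everything to a single merging step. First I would describe $\mathcal{P}(I)$ concretely, then prove that two $*$-invertible valuation $*$-ideals of comparable radical can be merged into one without leaving the class, and finally organize a given factorization into one factor per minimal prime.

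So, writing $I=(\prod_{i=1}^n I_i)_*$ with each $I_i$ a $*$-invertible valuation $*$-ideal, I may assume $D$ is not a field (else $I=D$, $\mathcal{P}(I)=\emptyset$, and both quantities are $0$) and, after discarding any factor equal to $D$, that every $I_i$ is proper. Then each $\sqrt{I_i}$ is a prime $*$-ideal by Proposition~\ref{Proposition 2.3}(2). Since $\prod_i I_i\subseteq I$, the minimal primes of $I$ and of $\prod_i I_i$ coincide: a minimal prime of $\prod_i I_i$ is one of the $\sqrt{I_i}$, hence a $*$-ideal, hence contains $I=(\prod_i I_i)_*$, while a minimal prime of $I$ is a $*$-ideal lying over $\prod_i I_i$. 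Because $\sqrt{\prod_i I_i}=\bigcap_i\sqrt{I_i}$, I conclude that $\mathcal{P}(I)$ is exactly the set of minimal members of $\{\sqrt{I_i}\mid i\in[1,n]\}$; in particular $\mathcal{P}(I)\subseteq\{\sqrt{I_i}\mid i\in[1,n]\}$.

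The crux is the following claim, which I would prove next: if $J_1,J_2$ are $*$-invertible valuation $*$-ideals with $\sqrt{J_1}\subseteq\sqrt{J_2}$, then $(J_1J_2)_*$ is a $*$-invertible valuation $*$-ideal with $\sqrt{(J_1J_2)_*}=\sqrt{J_1}$. The radical is immediate from $\sqrt{J_1J_2}=\sqrt{J_1}\cap\sqrt{J_2}=\sqrt{J_1}$, and $*$-invertibility is clear. For the valuation property I would fix a valuation overring $V$ with $J_1V\cap D=J_1$; Proposition~\ref{Proposition 2.9}(3) then gives $J_2V\cap D=J_2$, and since a $*$-invertible $*$-ideal is $*$-locally principal, Proposition~\ref{Proposition 2.9}(1) gives $(J_1J_2)_*V\cap D=(J_1J_2)_*$, so $(J_1J_2)_*$ is a valuation $*$-ideal via the same $V$. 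The point is to route the merging through Proposition~\ref{Proposition 2.9} rather than to construct a common valuation overring by hand; this is where I expect the only real difficulty to lie.

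With the claim in hand, the assembly is routine. Every $\sqrt{I_i}$ contains $\sqrt{I}=\bigcap_{Q\in\mathcal{P}(I)}Q$ and, being prime, contains some $Q\in\mathcal{P}(I)$; I would assign to each $i$ one such $Q$, always choosing $Q=\sqrt{I_i}$ when $\sqrt{I_i}\in\mathcal{P}(I)$. This partitions $[1,n]$ into blocks $(S_Q)_{Q\in\mathcal{P}(I)}$ with $Q\subseteq\sqrt{I_i}$ for $i\in S_Q$ and with at least one $i\in S_Q$ satisfying $\sqrt{I_i}=Q$. For fixed $Q$ I would start from a factor of radical exactly $Q$ and absorb the remaining members of $S_Q$ one at a time by the claim; since the running product always has radical $Q$ and every absorbed factor has radical $\supseteq Q$, each step is legitimate, and I obtain $I(Q):=(\prod_{i\in S_Q}I_i)_*$, a $*$-invertible valuation $*$-ideal with $\sqrt{I(Q)}=Q$. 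Reassembling via Lemma~\ref{Lemma 2.1} yields $I=(\prod_{Q\in\mathcal{P}(I)}I(Q))_*$, which both gives the asserted representation and shows $I$ is a $*$-product of $|\mathcal{P}(I)|$ such ideals, so the minimum is at most $|\mathcal{P}(I)|$. Conversely, in any factorization of $I$ into $m$ $*$-invertible valuation $*$-ideals, discarding the factors equal to $D$ leaves proper factors whose distinct radicals already include all of $\mathcal{P}(I)$ by the first paragraph, so that the number of proper factors, and hence $m$, is at least $|\mathcal{P}(I)|$. This establishes the equality and completes the proof.
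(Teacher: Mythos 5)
Your proof is correct, and it runs on the same core machinery as the paper's argument --- Proposition~\ref{Proposition 2.3}(2) to get prime radicals and Proposition~\ref{Proposition 2.9}(3) to transfer the valuation property to products --- but the logical organization is genuinely different. The paper starts from a factorization of \emph{minimal} length $n$, forms in one shot the block product $J=(\prod_{i,\,P\subseteq\sqrt{I_i}} I_i)_*$ for $P\in\mathcal{P}(I)$, applies Proposition~\ref{Proposition 2.9}(3) a single time (with the valuation overring of a factor whose radical equals $P=\sqrt{J}$) to see that $J$ is a valuation ideal, and then invokes minimality of $n$ to conclude each block $\{i\mid P\subseteq\sqrt{I_i}\}$ is a singleton; the count $n\le\sum_{Q\in\mathcal{P}(I)}1=|\mathcal{P}(I)|$ finishes both assertions at once, the minimal factorization itself being the desired representation. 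You instead compress an \emph{arbitrary} factorization constructively, merging two factors at a time via your claim, and obtain the minimality by a separate direct count; this buys an explicit algorithm (any factorization can be merged down to the canonical one, which is slightly more information than the paper extracts) at the cost of an iteration the paper sidesteps. Two small remarks, neither a gap: your detour through Proposition~\ref{Proposition 2.9}(1) is sound ($J_1$ is $*$-locally principal because it is $*$-invertible) but avoidable, since once $\sqrt{(J_1J_2)_*}=\sqrt{J_1}$ is known, one application of Proposition~\ref{Proposition 2.9}(3) with $I=J_1$ and $J=(J_1J_2)_*$ already yields $(J_1J_2)_*V\cap D=(J_1J_2)_*$ --- this is exactly how the paper handles a whole block at once; and your ``immediate'' radical computation tacitly uses $J_1J_2\subseteq (J_1J_2)_*\subseteq J_1\cap J_2$ to identify $\sqrt{(J_1J_2)_*}$ with $\sqrt{J_1J_2}$, which deserves one line. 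Finally, the trivial case $I=D$ also occurs when $D$ is not a field (all factors equal to $D$), but it is handled by the same one-line remark you give for fields.
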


\begin{proof}
Without restriction we can assume that $I$ is proper. Let $n$ be the smallest positive integer such that $I$ is the $*$-product of $n$ $*$-invertible valuation $*$-ideals of $D$. Then there are $*$-invertible valuation $*$-ideals $I_i$ of $D$ such that $I=(\prod_{i=1}^n I_i)_*$. First we show that $\mathcal{P}(I)\subseteq\{\sqrt{I_i}\mid i\in [1,n]\}$. Let $P\in\mathcal{P}(I)$. Then $\prod_{i=1}^n I_i\subseteq I\subseteq P$, and hence there is some $j\in [1,n]$ such that $I_j\subseteq P$. We infer that $I\subseteq I_j\subseteq\sqrt{I_j}\subseteq P$. Since $\sqrt{I_j}$ is a prime ideal of $D$ by Proposition~\ref{Proposition 2.3}(2), it follows that $P=\sqrt{I_j}$.

This implies that $|\mathcal{P}(I)|\leq n$. Next we show that for each $P\in\mathcal{P}(I)$, $|\{i\in [1,n]\mid P\subseteq\sqrt{I_i}\}=|\{i\in [1,n]\mid P=\sqrt{I_i}\}|=1$. Let $P\in\mathcal{P}(I)$. Then $P=\sqrt{I_j}$ for some $j\in [1,n]$, and thus $\{i\in [1,n]\mid P\subseteq\sqrt{I_i}\}\supseteq\{i\in [1,n]\mid P=\sqrt{I_i}\}\not=\emptyset$. Set $\mathcal{I}=\{i\in [1,n]\mid P\subseteq\sqrt{I_i}\}$ and $J=(\prod_{i=1,P\subseteq\sqrt{I_i}}^n I_i)_*$. It is sufficient to show that $\mathcal{I}$ is a singleton. Note that $\sqrt{J}=\bigcap_{i=1,P\subseteq\sqrt{I_i}}^n\sqrt{I_i}=P$ (since $\{i\in [1,n]\mid P=\sqrt{I_i}\}\not=\emptyset$). Clearly, $J$ is a $*$-invertible $*$-ideal of $D$. Since $\sqrt{I_j}=P=\sqrt{J}$, we infer by Proposition~\ref{Proposition 2.9}(3) that $J$ is a valuation ideal of $D$. Because of the minimality of $n$, we have that $\mathcal{I}$ is a singleton.

Since $[1,n]\subseteq\bigcup_{Q\in\mathcal{P}(I)}\{i\in [1,n]\mid Q\subseteq\sqrt{I_i}\}$, it follows that $n\leq\sum_{Q\in\mathcal{P}(I)}|\{i\in [1,n]\mid Q\subseteq\sqrt{I_i}\}|=|\mathcal{P}(I)|$, and hence $|\mathcal{P}(I)|=n$. Let $f:[1,n]\rightarrow\mathcal{P}(I)$ be defined by $f(i)=\sqrt{I_i}$ for each $i\in [1,n]$. Then $f$ is a well-defined bijection. For each $Q\in\mathcal{P}(I)$, set $I(Q)=I_{f^{-1}(Q)}$. Then $I=(\prod_{Q\in\mathcal{P}(I)} I(Q))_*$ and $\sqrt{I(Q)}=Q$ for each $Q\in\mathcal{P}(I)$.
\end{proof}

We continue our investigation of ideals that are finite $*$-products of $*$-invertible valuation $*$-ideals. The next result serves as a preparatory result for Theorem~\ref{Theorem 3.8}, but it is more generally applicable, since it holds for arbitrary integral domains.

\begin{proposition}\label{Proposition 2.12}
Let $D$ be an integral domain, let $*$ be a star operation of finite type on $D$ and let $\Omega$ be the set of finite $*$-products of $*$-invertible valuation $*$-ideals of $D$. Then for all $I,J,L\in\Omega$ with $(JL)_*\subseteq I$, there are some $J^{\prime},J^{\prime\prime},L^{\prime},L^{\prime\prime}\in\Omega$ such that $I=(J^{\prime}L^{\prime})_*$, $J=(J^{\prime}J^{\prime\prime})_*$ and $L=(L^{\prime}L^{\prime\prime})_*$.
\end{proposition}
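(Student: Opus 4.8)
The plan is to reduce to the case where $I$ is a single $*$-invertible valuation $*$-ideal and then to analyze that case through a valuation overring attached to $I$. Since a $*$-invertible valuation $*$-ideal and its $*$-products are unchanged when $*$ is replaced by $\widetilde{*}$, I would first assume without loss of generality that $*$ is stable, so that $(A\cap B)_*=A_*\cap B_*$ and the contraction to $D$ of a principal ideal of a $*$-closed valuation overring is again a $*$-ideal. Writing $I=(I_1\cdots I_n)_*$ with the $I_k$ proper $*$-invertible valuation $*$-ideals, I would induct on $n$; the case $n=0$ is trivial. For the inductive step I would first solve the problem for the single factor $I_1$, obtaining $I_1=(J_1'L_1')_*$ with $J_1'\mid J$ and $L_1'\mid L$, say $J=(J_1'\bar J)_*$ and $L=(L_1'\bar L)_*$. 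Cancelling the $*$-invertible ideal $(J_1'L_1')_*$ from $(JL)_*\subseteq(I_1(I_2\cdots I_n)_*)_*$ gives $(\bar J\bar L)_*\subseteq(I_2\cdots I_n)_*$, which has one fewer factor; the inductive hypothesis and recombination then finish the step. Thus everything comes down to the case $I=C$, a single $*$-invertible valuation $*$-ideal.

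The engine for the single-factor case is a quotient lemma: if $C\subseteq F$ are $*$-invertible valuation $*$-ideals, then $(F^{-1}C)_*$ is again a $*$-invertible valuation $*$-ideal. Fixing a valuation overring $V$ with $CV\cap D=C$, the inclusion $\sqrt{C}\subseteq\sqrt{F}$ and Proposition~\ref{Proposition 2.9}(3) give $FV\cap D=F$; then, since $(F(F^{-1}C)_*)_*=C$ is a $V$-valuation ideal, Proposition~\ref{Proposition 2.9}(2) yields $(F^{-1}C)_*V\cap D=(F^{-1}C)_*$. This lemma upgrades containment to divisibility inside $\Omega$: if $X\in\Omega$ and $X\subseteq F$ with $F$ a valuation $*$-ideal, then $(F^{-1}X)_*\in\Omega$. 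I would prove this by induction on the number of factors of $X$, choosing a factor $B$ of $X$ with $\sqrt{B}\subseteq\sqrt{F}$ (comparable to $F$ by Corollary~\ref{Corollary 2.10}(2)): if $B\subseteq F$ I split it off by the quotient lemma and multiply by the remaining factors, while if $F\subseteq B$ I set $G=(B^{-1}F)_*\in\Omega$, note that the remaining factor-product $X'$ satisfies $X'\subseteq G$, and recurse on $(G^{-1}X')_*=(F^{-1}X)_*$. Peeling the factors of an arbitrary $Y\in\Omega$ then shows that $X\subseteq Y$ in $\Omega$ always forces $Y\mid X$ in $\Omega$.

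For the single-factor refinement, let $V$ be a valuation overring with $CV\cap D=C$. By Proposition~\ref{Proposition 2.9}(3) every $*$-invertible $*$-ideal containing $C$ is a $V$-valuation ideal, hence lies in $\Omega$, and by Corollary~\ref{Corollary 2.10}(1) these ideals form a chain. Using the containment-implies-divisibility fact just established, it suffices to produce $J',L'\in\Omega$ with $J\subseteq J'$, $L\subseteq L'$ and $(J'L')_*=C$: for then $J'\mid J$ and $L'\mid L$ automatically, and $(J'L')_*=C$ is exactly the desired splitting. Since $J',L'\supseteq C$ both lie in the chain above $C$, their $*$-product is again a $V$-valuation ideal, so matching it with $C$ reduces to matching values in the value group $\Gamma$ of $V$ along the homomorphism $\Omega\to\Gamma_{\ge 0}$ (available because, as in the proof of Corollary~\ref{Corollary 2.10}(3), each $*$-invertible $*$-ideal extends to a principal ideal of $V$).

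The hard part will be the existence of such covers $J'$ and $L'$, that is, the Riesz-type interpolation for $\Omega$ underlying the whole statement. Writing $\gamma=w(C)$, $\gamma_1=\min(w(J),\gamma)$ and $\gamma_2=\gamma-\gamma_1$, one wants $V$-valuation ideals $J'\supseteq J$ and $L'\supseteq L$ in $\Omega$ realising $w(J')=\gamma_1$ and $w(L')=\gamma_2$. The subtlety is that a value need not be attained by a cover constructed naively: contracting $\{x\in V\mid w(x)\ge\gamma_1\}$ to $D$ gives a $V$-valuation ideal above $J$ and $C$, but its value can overshoot $\gamma_1$, so one cannot simply cut along $\Gamma$. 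I expect to resolve this by building the covers from the finitely many valuation-ideal factors of $J$ and $L$, each of which is comparable to $C$ by Corollary~\ref{Corollary 2.10}(2): the quotient lemma splits off exact pieces, while Corollary~\ref{Corollary 2.10}(4) forces every factor whose radical strictly contains $\sqrt{C}$ into $\bigcap_{m}(C^m)_*$ and thereby controls its contribution, so that $\gamma_1$ is realised by an honest divisor of $J$ rather than by an auxiliary valuation ideal. Carrying out this interpolation uniformly, so that the chosen $J'$, $L'$ genuinely divide $J$, $L$ and multiply to exactly $C$, is the principal obstacle.
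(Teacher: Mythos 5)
Your outer reduction is sound and matches the paper's: inducting on the number of valuation-ideal factors of $I$, splitting off one factor via the single-factor case, and cancelling the $*$-invertible ideal $(J_1'L_1')_*$ is exactly the paper's inductive step. Your two auxiliary lemmas are also correct: the quotient lemma (if $C\subseteq F$ are $*$-invertible valuation $*$-ideals then $(F^{-1}C)_*$ is one too) follows from Proposition~\ref{Proposition 2.9}(2),(3) just as you say, and your ``containment implies divisibility in $\Omega$'' statement is a genuinely nice packaging, proved by the same peel-one-comparable-factor recursion (via primality of the radical, Proposition~\ref{Proposition 2.3}(2), and comparability, Corollary~\ref{Corollary 2.10}(2)) that the paper uses. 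The problem is that the single-factor case $I=C$ is the heart of the proposition, and you do not prove it: you reduce it to a Riesz-type interpolation in the value group of $V$ and then explicitly leave the existence of the covers $J'\supseteq J$, $L'\supseteq L$ with $(J'L')_*=C$ open, calling it ``the principal obstacle.'' That is a genuine gap, and your own worry about it is well founded: the target $\gamma_1=\min(w(J),\gamma)$ need not be realized by any divisor of $J$ in $\Omega$, since factors $J(P)$ with $\sqrt{J(P)}\nsubseteq\sqrt{C}$ can still have positive $w$-value (nothing forces $J(P)V=V$), so $w(J)$ mixes contributions that cannot be matched against $C$, and Corollary~\ref{Corollary 2.10}(4) gives only a containment, not an exact-value splitting.

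The irony is that no interpolation is needed: the recursion pattern you already used for containment-implies-divisibility closes the gap, and it is precisely what the paper does. Given $(JL)_*\subseteq C$ with $C$ a single $*$-invertible valuation $*$-ideal, induct on $|\mathcal{P}(J)|+|\mathcal{P}(L)|$. Since $\sqrt{C}$ is prime, without loss of generality $J\subseteq\sqrt{C}$; by Proposition~\ref{Proposition 2.11} pick the canonical factor $J(Q)$ of $J$ with $J(Q)\subseteq\sqrt{C}$, which is comparable to $C$ by Corollary~\ref{Corollary 2.10}(2). If $J(Q)\subseteq C$, take $J'=C$, $L'=D$, and absorb the cofactor $(C^{-1}J(Q))_*$ (a valuation ideal by Proposition~\ref{Proposition 2.9}(2), i.e.\ your quotient lemma) into $J''$. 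If $C\subsetneq J(Q)$, write $C=(J(Q)C')_*$ with $C'$ a valuation ideal, cancel the $*$-invertible $J(Q)$ to get $(BL)_*\subseteq C'$ with $B=(\prod_{P\in\mathcal{P}(J)\setminus\{Q\}}J(P))_*$, recurse, and recombine $J'=(J(Q)B')_*$. So your machinery suffices once redirected from the one-shot value-group construction to this descent on the number of minimal primes; as submitted, however, the proposal proves the proposition only modulo its hardest step.
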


\begin{proof}
It is sufficient to show by induction that for each $m\in\mathbb{N}$ and all $I,J,L\in\Omega$ such that $I$ is a $*$-product of $m$ $*$-invertible valuation $*$-ideals of $D$ and $(JL)_*\subseteq I$, there are some $J^{\prime},J^{\prime\prime},L^{\prime},L^{\prime\prime}\in\Omega$ such that $J=(J^{\prime}J^{\prime\prime})_*$, $L=(L^{\prime}L^{\prime\prime})_*$ and $I=(J^{\prime}L^{\prime})_*$.

\medskip
First let $m=1$. By Proposition~\ref{Proposition 2.11}, it remains to show by induction that for each $n\in\mathbb{N}_0$, each $*$-invertible valuation $*$-ideal $I$ of $D$ and all $J,L\in\Omega$ with $|\mathcal{P}(J)|+|\mathcal{P}(L)|=n$ and $(JL)_*\subseteq I$, there are some $J^{\prime},J^{\prime\prime},L^{\prime},L^{\prime\prime}\in\Omega$ such that $J=(J^{\prime}J^{\prime\prime})_*$, $L=(L^{\prime}L^{\prime\prime})_*$ and $I=(J^{\prime}L^{\prime})_*$. Let $n\in\mathbb{N}_0$, let $I$ be a $*$-invertible valuation $*$-ideal of $D$ and let $J,L\in\Omega$ be such that $|\mathcal{P}(J)|+|\mathcal{P}(L)|=n$ and $(JL)_*\subseteq I$. Without restriction let $I$ be proper. Since $I$ is a valuation ideal of $D$, $\sqrt{I}$ is a prime ideal of $D$ by Proposition~\ref{Proposition 2.3}(2), and thus $J\subseteq\sqrt{I}$ or $L\subseteq\sqrt{I}$. Without restriction let $J\subseteq\sqrt{I}$. By Proposition~\ref{Proposition 2.11}, there are $*$-invertible valuation $*$-ideals $(J(P))_{P\in\mathcal{P}(J)}$ of $D$ such that $J=(\prod_{P\in\mathcal{P}(J)} J(P))_*$. Consequently, $J(Q)\subseteq\sqrt{I}$ for some $Q\in\mathcal{P}(J)$. This implies that $\sqrt{J(Q)}\subseteq\sqrt{I}$. It follows from Corollary~\ref{Corollary 2.10}(2) that $J(Q)$ and $I$ are comparable.

\medskip
{\noindent}{{\bf Case 1.}} $J(Q)\subseteq I$. Then $J(Q)=(IA)_*$ for some $*$-invertible $*$-ideal $A$ of $D$. It follows from Proposition~\ref{Proposition 2.9}(2) that $A$ is a valuation ideal of $D$. Set $J^{\prime}=I$, $J^{\prime\prime}=(A\prod_{P\in\mathcal{P}(J)\setminus\{Q\}} J(P))_*$, $L^{\prime}=D$ and $L^{\prime\prime}=L$. Then $J^{\prime},J^{\prime\prime},L^{\prime},L^{\prime\prime}\in\Omega$, $J=(J(Q)\prod_{P\in\mathcal{P}(J)\setminus\{Q\}} J(P))_*=(J^{\prime}J^{\prime\prime})_*$, $L=(L^{\prime}L^{\prime\prime})_*$ and $I=(J^{\prime}L^{\prime})_*$.

\medskip
{\noindent}{{\bf Case 2.}} $I\subsetneq J(Q)$. Then $I=(J(Q)C)_*$ for some $*$-invertible $*$-ideal $C$ of $D$. By Proposition~\ref{Proposition 2.9}(2), we infer that $C$ is a valuation ideal of $D$. Set $B=(\prod_{P\in\mathcal{P}(J)\setminus\{Q\}} J(P))_*$. Then $B\in\Omega$ and $(J(Q)BL)_*=(JL)_*\subseteq I=(J(Q)C)_*$. Since $J(Q)$ is $*$-invertible, it follows that $(BL)_*\subseteq C$. Moreover, $\mathcal{P}(B)=\mathcal{P}(J)\setminus\{Q\}$, and hence $|\mathcal{P}(B)|+|\mathcal{P}(L)|<n$. Therefore, there are $B^{\prime},B^{\prime\prime},L^{\prime},L^{\prime\prime}\in\Omega$ such that $B=(B^{\prime}B^{\prime\prime})_*$, $L=(L^{\prime}L^{\prime\prime})_*$ and $C=(B^{\prime}L^{\prime})_*$ by the induction hypothesis. Set $J^{\prime}=(J(Q)B^{\prime})_*$ and $J^{\prime\prime}=B^{\prime\prime}$. Then $J^{\prime},J^{\prime\prime}\in\Omega$, $J=(J(Q)B)_*=(J^{\prime}J^{\prime\prime})_*$ and $I=(J(Q)C)_*=(J^{\prime}L^{\prime})_*$. This concludes the proof of the base step.

\medskip
Now let $m\in\mathbb{N}$ and $I,J,L\in\Omega$ be such that $I$ is a $*$-product of $m+1$ $*$-invertible valuation $*$-ideals of $D$ and $(JL)_*\subseteq I$. Clearly, there are some $A,B\in\Omega$ such that $A$ is a $*$-product of $m$ $*$-invertible valuation $*$-ideals of $D$ and $B$ is a $*$-invertible valuation $*$-ideal of $D$ such that $I=(AB)_*$. Then $(JL)_*\subseteq I\subseteq B$. As shown before in the base step, there are $M^{\prime},M^{\prime\prime},N^{\prime},N^{\prime\prime}\in\Omega$ such that $J=(M^{\prime}M^{\prime\prime})_*$, $L=(N^{\prime}N^{\prime\prime})_*$ and $B=(M^{\prime}N^{\prime})_*$. We have that $(M^{\prime\prime}N^{\prime\prime}B)_*=(M^{\prime}M^{\prime\prime}N^{\prime}N^{\prime\prime})_*=(JL)_*\subseteq I=(AB)_*$, and hence $(M^{\prime\prime}N^{\prime\prime})_*\subseteq A$. It follows by the induction hypothesis that there are $C^{\prime},C^{\prime\prime},D^{\prime},D^{\prime\prime}\in\Omega$ such that $M^{\prime\prime}=(C^{\prime}C^{\prime\prime})_*$, $N^{\prime\prime}=(D^{\prime}D^{\prime\prime})_*$ and $A=(C^{\prime}D^{\prime})_*$. Set $J^{\prime}=(M^{\prime}C^{\prime})_*$, $J^{\prime\prime}=C^{\prime\prime}$, $L^{\prime}=(N^{\prime}D^{\prime})_*$ and $L^{\prime\prime}=D^{\prime\prime}$. Then $J^{\prime},J^{\prime\prime},L^{\prime},L^{\prime\prime}\in\Omega$, $J=(M^{\prime}M^{\prime\prime})_*=(J^{\prime}J^{\prime\prime})_*$, $L=(N^{\prime}N^{\prime\prime})_*=(L^{\prime}L^{\prime\prime})_*$ and $I=(AB)_*=(C^{\prime}D^{\prime}M^{\prime}N^{\prime})_*=(J^{\prime}L^{\prime})_*$.
\end{proof}

\section{$*$-Valuation ideal factorization domains}

A {\em $\pi$-domain} is an integral domain whose nonzero principal ideals can be written as a finite product of prime ideals \cite{a78}. Hence, each nonzero principal ideal of a $\pi$-domain can be written as a finite product of valuation ideals, because a prime ideal is a valuation ideal. In this section, we study such type of integral domains in the more general setting of star operations. We begin this section with the definition of $*$-VIFDs for which we note that an ideal $I$ of an integral domain $D$ is a fractional ideal of $D$ with $I\subseteq D$, so $D$ is also a $*$-ideal of $D$.

\begin{definition}\label{Definition 3.1}
{\em Let $D$ be an integral domain and let $*$ be a star operation on $D$. Then $D$ is called a {\em $*$-valuation ideal factorization domain} ($*$-VIFD) if each nonzero principal ideal $I$ of $D$ can be written as a finite $*$-product of valuation ideals of $D$, i.e., there are some $n\in\mathbb{N}$ and valuation ideals $I_i$ of $D$ such that $I=(\prod_{i=1}^n I_i)_*$. We say that $D$ is a {\em VIFD} if $D$ is a $d$-VIFD.}
\end{definition}

Let $*_1$ and $*_2$ be two star operations of finite type on $D$ such that $*_1\leq *_2$. It is easy to see that $(I_{*_1})_{*_2}=(I_{*_2})_{*_1}=I_{*_2}$ for all $I\in F(D)$. Hence, by definition, a $*_1$-VIFD is a $*_2$-VIFD. In particular,

\smallskip

\begin{center}
VIFD $\Rightarrow$ $*$-VIFD $\Rightarrow$ $t$-VIFD
\end{center}

\smallskip
\noindent
for any star operation $*$ of finite type.

\begin{lemma}\label{Lemma 3.2}
Let $D$ be an integral domain, let $*$ be a star operation of finite type on $D$ and let $I$ be a $*$-invertible valuation ideal of $D$.
Then $I_*$ is a $*$-invertible valuation $*$-ideal of $D$.
\end{lemma}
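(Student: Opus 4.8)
The plan is to verify the three constituent properties of $I_*$ in turn: that it is a $*$-ideal, that it is $*$-invertible, and that it is a valuation ideal. The first is immediate: since $I$ is a valuation ideal it is in particular an integral ideal of $D$, so $I\subseteq D$ and hence $I_*\subseteq D_*=D$; together with $(I_*)_*=I_*$ this already shows that $I_*$ is a $*$-ideal of $D$.

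For $*$-invertibility I would first record that $(I_*)^{-1}=I^{-1}$. The inclusion $(I_*)^{-1}\subseteq I^{-1}$ follows from $I\subseteq I_*$. For the reverse, if $x\in I^{-1}$ then $xI\subseteq D$, so by Lemma~\ref{Lemma 2.1} (applied to the nonzero $x\in K$) one gets $xI_*=(xI)_*\subseteq D_*=D$, whence $x\in(I_*)^{-1}$. Now using the identity $(AB)_*=(A_*B)_*$ from Lemma~\ref{Lemma 2.1}, I compute
\[
(I_*(I_*)^{-1})_*=(I_*I^{-1})_*=(II^{-1})_*=D,
\]
the last equality being the $*$-invertibility of $I$. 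Thus $I_*$ is $*$-invertible.

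The valuation-ideal property is the step that carries the actual content, and the idea is to transport it from $I$ to $I_*$ via Proposition~\ref{Proposition 2.9}(3). Since $I$ is a valuation ideal, there is a valuation overring $V$ of $D$ with $IV\cap D=I$. From $I\subseteq I_*$ we get $\sqrt{I}\subseteq\sqrt{I_*}$, and both $I$ and $I_*$ are $*$-invertible by the previous step. Applying Proposition~\ref{Proposition 2.9}(3) with $R=V$ and $J=I_*$ then yields $I_*V\cap D=I_*$, and since $V$ is a valuation overring this exhibits $I_*$ as a valuation ideal of $D$, completing the argument.

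I do not expect a serious obstacle here; the only point requiring care is ensuring that all hypotheses of Proposition~\ref{Proposition 2.9}(3) are genuinely in place, and in particular that it is $I_*$ itself (not merely $I$) that is $*$-invertible. This is precisely why the $*$-invertibility step is settled before the valuation-ideal step, so that it is available as an input to Proposition~\ref{Proposition 2.9}(3).
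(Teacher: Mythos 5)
Your first two steps are correct: $I_*$ is a $*$-ideal, $(I_*)^{-1}=I^{-1}$, and $(I_*(I_*)^{-1})_*=(I_*I^{-1})_*=(II^{-1})_*=D$ all check out via Lemma~\ref{Lemma 2.1}. The gap is in the third step, and it is exactly at the point you flagged as needing care, but the hypothesis that fails is a different one from the one you checked. Proposition~\ref{Proposition 2.9} requires its ideal $I$ to be a nonzero $*$-locally principal \emph{$*$-ideal} of $D$, and in your application the reference ideal is the lemma's $I$, which is only assumed to be a $*$-invertible valuation \emph{ideal}; nothing gives $I_*=I$. (If $*$-invertible valuation ideals were automatically $*$-ideals, Lemma~\ref{Lemma 3.2} would be vacuous: it is invoked in Proposition~\ref{Proposition 3.3} precisely to upgrade the valuation ideals occurring in a $*$-product to valuation $*$-ideals.) Nor is the hypothesis cosmetic: in the proof of Proposition~\ref{Proposition 2.9}(3), the passage from $IR\cap D=I$ to $(I^n)_*R\cap D=(I^n)_*$ goes through part (1) by induction, and for a non-$*$-ideal $I$ the base case of that induction is $I_*R\cap D=I_*$ --- which, with $R=V$ and $J=I_*$, is precisely the statement you are trying to prove. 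Swapping roles and taking the proposition's $I$ to be $I_*$ is equally circular, since then the hypothesis $I_*V\cap D=I_*$ is the conclusion.

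The structural obstruction is worth naming: working with the original $V$, you can only get $I_*V\cap D\subseteq (I_*V)_*\cap D=(IV)_*\cap D$, and to close the loop you would need $(IV)_*\cap D\subseteq (IV\cap D)_*$; but for an arbitrary star operation of finite type only the reverse inclusion $(A\cap B)_*\subseteq A_*\cap B_*$ holds, and the inclusion you need is exactly stability. This is why the paper's proof does not argue with $*$ and $V$ directly: it passes to the stable operation $\widetilde{*}$, notes that $I$ is $\widetilde{*}$-invertible because $*$-${\rm Max}(D)=\widetilde{*}$-${\rm Max}(D)$, replaces $V$ by the valuation overring $W=V_{\widetilde{*}}$ (as in Lemma~\ref{Lemma 2.2}), computes $I_{\widetilde{*}}W\cap D\subseteq (I_{\widetilde{*}}W)_{\widetilde{*}}\cap D=(IV)_{\widetilde{*}}\cap D_{\widetilde{*}}=(IV\cap D)_{\widetilde{*}}=I_{\widetilde{*}}$ using stability, and then transfers back via $I_*=I_{\widetilde{*}}$, since the $*$-invertible $*$-ideals coincide with the $\widetilde{*}$-invertible $\widetilde{*}$-ideals. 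Your asserted identity $I_*V\cap D=I_*$ is in fact true, but only as an output of this stable-operation argument (from $V\subseteq W$ one gets $I_*V\cap D\subseteq I_*W\cap D=I_*$); it is not something Proposition~\ref{Proposition 2.9}(3) can deliver from your hypotheses.
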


\begin{proof}
Since $*$-${\rm Max}(D)=\widetilde{*}$-${\rm Max}(D)$, a $*$-invertible ideal of $D$ is $\widetilde{*}$-invertible, so $I$ is a $\widetilde{*}$-invertible ideal of $D$. There exists a valuation overring $V$ of $D$ such that $IV\cap D=I$. Set $W=V_{\widetilde{*}}$ and note that $W$ is a valuation overring of $D$. Since $\widetilde{*}$ is stable, we have that $I_{\widetilde{*}}\subseteq I_{\widetilde{*}}W\cap D\subseteq (I_{\widetilde{*}}W)_{\widetilde{*}}\cap D=(IV)_{\widetilde{*}}\cap D_{\widetilde{*}}=(IV\cap D)_{\widetilde{*}}=I_{\widetilde{*}}$, and hence $I_{\widetilde{*}}W\cap D=I_{\widetilde{*}}$. We infer that $I_{\widetilde{*}}$ is a $\widetilde{*}$-invertible valuation ${\widetilde{*}}$-ideal of $D$. Therefore, $I_{\widetilde{*}}$ is a $*$-invertible valuation $*$-ideal of $D$. Finally, note that $I_{\widetilde{*}}=(I_{\widetilde{*}})_*=I_*$.
\end{proof}

We next give a first elementary characterization of $*$-VIFDs. Note that these characterizations view the concepts of $*$-VIFDs from three different angles. First, we can replace finite $*$-products of valuation ideals by finite $*$-products of valuation $*$-ideals (and vice versa). Second, we can lift the existence of representations of arbitrary nonzero principal ideals as finite $*$-products of valuation ideals to arbitrary $*$-invertible $*$-ideals. Finally, we prove the interchangeability of the star operations $*$ and $\widetilde{*}$ in this characterization.

\begin{proposition}\label{Proposition 3.3}
Let $D$ be an integral domain and let $*$ be a star operation of finite type on $D$\textnormal{:} The following statements are equivalent.
\begin{enumerate}
\item $D$ is a $*$-VIFD.
\item $D$ is a $\widetilde{*}$-VIFD.
\item Each nonzero principal ideal of $D$ is a finite $*$-product of valuation $*$-ideals.
\item Each $*$-invertible $*$-ideal of $D$ is a finite $*$-product of valuation $*$-ideals.
\item Each $*$-invertible $*$-ideal of $D$ is a finite $*$-product of valuation ideals.
\end{enumerate}
\end{proposition}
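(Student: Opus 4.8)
The plan is to prove the cycle of implications $(1)\Rightarrow(3)\Rightarrow(4)\Rightarrow(5)\Rightarrow(1)$ together with $(1)\Leftrightarrow(2)$, exploiting the tools already established earlier in the paper. The overarching idea is that $*$-invertible valuation ideals close up nicely under the $*$-operation (Lemma~\ref{Lemma 3.2}) and that finite $*$-products of such ideals are well-behaved (Propositions~\ref{Proposition 2.11} and~\ref{Proposition 2.12}). First I would record the basic observation that every valuation ideal appearing in a factorization of a nonzero principal ideal $I=(\prod_{i=1}^n I_i)_*$ is automatically $*$-invertible: since $I=aD$ is $*$-invertible and $I=(\prod I_i)_*$, each $I_i$ divides a $*$-invertible $*$-ideal in the group $T_*(D)$, hence is itself $*$-invertible. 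This reconciles the bare notion of ``valuation ideal'' in Definition~\ref{Definition 3.1} with the $*$-invertible valuation $*$-ideals that Propositions~\ref{Proposition 2.11} and~\ref{Proposition 2.12} are phrased for.

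For $(1)\Rightarrow(3)$, given $I=(\prod_{i=1}^n I_i)_*$ with each $I_i$ a valuation ideal, I would replace each $I_i$ by $(I_i)_*$. By the observation above each $I_i$ is $*$-invertible, so by Lemma~\ref{Lemma 3.2} each $(I_i)_*$ is a $*$-invertible valuation $*$-ideal, and $(\prod_i (I_i)_*)_*=(\prod_i I_i)_*=I$ by Lemma~\ref{Lemma 2.1}. Thus $I$ is a finite $*$-product of valuation $*$-ideals. The implication $(4)\Rightarrow(5)$ is immediate since a valuation $*$-ideal is in particular a valuation ideal, and $(5)\Rightarrow(1)$ is immediate because every nonzero principal ideal is a $*$-invertible $*$-ideal. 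The only genuinely substantive step is $(3)\Rightarrow(4)$: lifting factorization from principal ideals to arbitrary $*$-invertible $*$-ideals.

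For $(3)\Rightarrow(4)$, let $J$ be a $*$-invertible $*$-ideal; then $J$ is $*$-finite, so there is a nonzero $a\in D$ with $aJ^{-1}$ an integral $*$-ideal, i.e.\ $aD=(J\cdot aJ^{-1})_*$ with both factors integral $*$-invertible $*$-ideals. Here is where I would invoke the cancellation-and-splitting machinery of Proposition~\ref{Proposition 2.12}. By $(3)$ the principal ideal $aD$ is a finite $*$-product of valuation $*$-ideals, so $aD\in\Omega$ in the notation of Proposition~\ref{Proposition 2.12}. Writing $aD=(J\cdot(aJ^{-1}))_*$ and applying the proposition with $I=aD$, $L=J$, and a factorization of $aJ^{-1}$ (which I must first exhibit as an element of $\Omega$) would decompose $J$ itself as a product of factors lying in $\Omega$. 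The technical care needed is to present $aJ^{-1}$ as a member of $\Omega$; one route is to note that since $aD\in\Omega$ and $(J\cdot aJ^{-1})_*=aD\subseteq aJ^{-1}$, Proposition~\ref{Proposition 2.12} (applied with the trivial factor $D$) forces $aJ^{-1}$ to be a finite $*$-product of valuation $*$-ideals as well. Once both $aD$ and a candidate factor lie in $\Omega$, the proposition yields $J\in\Omega$, which is exactly $(4)$.

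Finally, the equivalence $(1)\Leftrightarrow(2)$ follows from the fact that $\widetilde{*}$ is a stable star operation of finite type with $*$-${\rm Max}(D)=\widetilde{*}$-${\rm Max}(D)$ and $J_{\widetilde{*}}\subseteq J_*$ for all $J$, together with the already-proved equivalence of $(1)$ with the intrinsic conditions $(4)$ and $(5)$: since $(4)$ and $(5)$ refer to $*$-invertible $*$-ideals, and the $*$-invertible $*$-ideals coincide with the $\widetilde{*}$-invertible $\widetilde{*}$-ideals (cited as \cite[Lemma 2.1(3)]{or20}), the characterizing condition $(5)$ is literally insensitive to replacing $*$ by $\widetilde{*}$, giving $(1)\Leftrightarrow(2)$ for free. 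I expect the main obstacle to be the bookkeeping in $(3)\Rightarrow(4)$: correctly setting up the application of Proposition~\ref{Proposition 2.12} so that every ideal fed into it is certified to lie in $\Omega$, and tracking the splitting $J=(J'J'')_*$ so that the leftover factors recombine into a genuine finite $*$-product of valuation $*$-ideals rather than merely $*$-invertible ones.
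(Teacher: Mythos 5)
Your cycle structure is fine at the easy nodes: the preliminary observation that each factor in a $*$-factorization of a $*$-invertible ideal is itself $*$-invertible is correct, $(1)\Rightarrow(3)$ via Lemma~\ref{Lemma 3.2} matches the paper, and $(4)\Rightarrow(5)\Rightarrow(1)$ is trivial as you say. But the step you yourself flag as the substantive one, $(3)\Rightarrow(4)$, contains a genuine circularity. Proposition~\ref{Proposition 2.12} is a splitting statement \emph{within} $\Omega$: its hypotheses require all three ideals $I$, $J$, $L$ to already lie in $\Omega$, and it then refines their factorizations compatibly. It cannot be used to certify that a divisor of a member of $\Omega$ lies in $\Omega$. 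Your bootstrap for $aJ^{-1}$ --- ``since $(J\cdot aJ^{-1})_*=aD\subseteq aJ^{-1}$, apply Proposition~\ref{Proposition 2.12} with the trivial factor $D$'' --- instantiates the proposition with the unknown ideal in a slot where membership in $\Omega$ is assumed, not concluded: with $I=aJ^{-1}$ you need $aJ^{-1}\in\Omega$ as a hypothesis, and with $I=aD$, $L=J$ you need $J\in\Omega$ as a hypothesis. Either way you assume exactly what you are trying to prove. What you would actually need is the $*$-Schreier property for \emph{arbitrary} $*$-invertible $*$-ideals, but in this paper that property is a \emph{consequence} of Proposition~\ref{Proposition 3.3} (it is the forward direction of Theorem~\ref{Theorem 3.8}, proved by combining Proposition~\ref{Proposition 3.3} with Proposition~\ref{Proposition 2.12}), so the dependency cannot be reversed without a new argument.

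The paper's actual proof of $(3)\Rightarrow(4)$ is a local--global argument, and this is the idea missing from your sketch. By Corollary~\ref{Corollary 2.4}, each proper $*$-invertible valuation $*$-ideal lies in a \emph{unique} maximal $t$-ideal; hence under $(3)$ every nonzero principal ideal, and therefore every $*$-invertible $*$-ideal $I$, lies in only finitely many maximal $t$-ideals. One then picks, for each such maximal $t$-ideal $N$, a local generator $b$ of $I_N$ (using that $I$ is $t$-locally principal), factors $bD$ by $(3)$, discards the factors not contained in $N$, and patches the finitely many local factorizations into a single global $*$-product; the patching works precisely because proper $*$-invertible valuation $*$-ideals see only one maximal $t$-ideal each, and because $*$-invertible $*$-ideals are $t$-ideals determined by their localizations at maximal $t$-ideals. (Your Proposition~\ref{Proposition 2.9}(2) does handle the case $n=1$ of a single valuation factor, but splitting a divisor across several factors is exactly the Schreier-type content that is unavailable at this stage.) Note also that your proof of $(1)\Leftrightarrow(2)$ is routed through $(4)$/$(5)$ and so inherits the gap; it is easily repaired independently, as in the paper: $(2)\Rightarrow(1)$ follows from $\widetilde{*}\leq *$, and $(3)\Rightarrow(2)$ from the identity $(IJ)_{\widetilde{*}}=((IJ)_{\widetilde{*}})_*=(IJ)_*$ for $*$-invertible $*$-ideals $I$, $J$, using that $*$-invertible $*$-ideals coincide with $\widetilde{*}$-invertible $\widetilde{*}$-ideals.
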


\begin{proof}
(1) $\Rightarrow$ (3) This is an immediate consequence of Lemma~\ref{Lemma 3.2}.

(2) $\Rightarrow$ (1) This follows from the fact that $\widetilde{*}\leq *$.

(3) $\Rightarrow$ (2) This is an immediate consequence of the following observation: If $I$ and $J$ are $*$-invertible $*$-ideals of $D$, then $I$ and $J$ are $\widetilde{*}$-invertible $\widetilde{*}$-ideals of $D$ and $(IJ)_{\widetilde{*}}=((IJ)_{\widetilde{*}})_*=(IJ)_*$ (since $(IJ)_{\widetilde{*}}$ is a $\widetilde{*}$-invertible $\widetilde{*}$-ideal of $D$).

(3) $\Rightarrow$ (4) Let $I$ be a proper $*$-invertible $*$-ideal of $D$. Choose a nonzero $a\in I$. Clearly, $aD=(\prod_{j=1}^m J_j)_*$ with $m\in\mathbb{N}$ and proper $*$-invertible valuation $*$-ideals $J_j$. Observe that $J_j$ is a $t$-invertible $t$-ideal for each $j\in [1,m]$. Consequently, for each $j\in [1,m]$, $J_j$ is contained in a unique maximal $t$-ideal by Corollary~\ref{Corollary 2.4}. We infer that $aD$ is contained in only finitely many maximal $t$-ideals of $D$, and hence $I$ is contained in only finitely many maximal $t$-ideals of $D$. Since $I$ is a $t$-invertible $t$-ideal of $D$, we have that $I_M$ is a principal ideal of $D_M$ for each $M\in t$-${\rm Max}(D)$. Let $N\in t$-${\rm Max}(D)$ be such that $I\subseteq N$. Then $I_N=bD_N$ for some $b\in I$. Note that $bD=(\prod_{i=1}^n I_i)_*$ for $n\in\mathbb{N}$ and $*$-invertible valuation $*$-ideals $I_i$ of $D$. Without restriction we can assume that there is some $r\in [1,n]$ such that $I_i\subseteq N$ for each $i\in [1,r]$ and $I_i\nsubseteq N$ for each $i\in [r+1,n]$. Observe that $I_N=((\prod_{i=1}^r I_i)_*)_N$. Let $\ell\in\mathbb{N}$ and let $(N_i)_{i=1}^{\ell}$ be the distinct maximal $t$-ideals of $D$ that contain $I$. Then for each $i\in [1,\ell]$ there is some $m_i\in\mathbb{N}$ and some finite $*$-product $(\prod_{j=1}^{m_i} J_{i,j})_*$ of $*$-invertible valuation $*$-ideals of $D$ such that $J_{i,j}\subseteq N_i$ for each $j\in [1,m_i]$ and such that $I_{N_i}=((\prod_{j=1}^{m_i} J_{i,j})_*)_{N_i}$. Note that for each $N\in t$-${\rm Max}(D)$, we have that $I_N=((\prod_{i=1}^{\ell}\prod_{j=1}^{m_i} J_{i,j})_*)_N$ (since every proper $*$-invertible valuation $*$-ideal of $D$ is contained in a unique maximal $t$-ideal of $D$). Since $I$ and $(\prod_{i=1}^{\ell}\prod_{j=1}^{m_i} J_{i,j})_*$ are $*$-invertible $*$-ideals of $D$ (and hence $t$-ideals of $D$), this implies that $I=(\prod_{i=1}^{\ell}\prod_{j=1}^{m_i} J_{i,j})_*$ is a finite $*$-product of valuation $*$-ideals of $D$.

(4) $\Rightarrow$ (5) $\Rightarrow$ (1) This is obvious.
\end{proof}

\begin{corollary}\label{Corollary 3.4}
Let $D$ be an integral domain and let $*_1$ and $*_2$ be star operations of finite type on $D$ such that $\widetilde{*_1}=\widetilde{*_2}$. Then $D$ is a $*_1$-VIFD if and only if $D$ is a $*_2$-VIFD.
\end{corollary}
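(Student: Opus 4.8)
The plan is to deduce everything formally from the equivalence of conditions (1) and (2) in Proposition~\ref{Proposition 3.3}, which asserts that for \emph{any} star operation $*$ of finite type on $D$, the domain $D$ is a $*$-VIFD if and only if $D$ is a $\widetilde{*}$-VIFD. The role of the hypothesis $\widetilde{*_1}=\widetilde{*_2}$ is simply to identify the two associated stable finite-type operations, so that the ``bridge'' conditions produced by Proposition~\ref{Proposition 3.3} coincide.

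First I would observe that since $*_1$ is a star operation of finite type, Proposition~\ref{Proposition 3.3} (equivalence of (1) and (2)) applies and yields: $D$ is a $*_1$-VIFD if and only if $D$ is a $\widetilde{*_1}$-VIFD. Next I would apply the same equivalence to $*_2$, which is also a star operation of finite type, obtaining: $D$ is a $*_2$-VIFD if and only if $D$ is a $\widetilde{*_2}$-VIFD. At this point the only remaining ingredient is the assumption $\widetilde{*_1}=\widetilde{*_2}$, which makes the notion of a $\widetilde{*_1}$-VIFD literally the same as that of a $\widetilde{*_2}$-VIFD.

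Chaining these three facts gives the chain of equivalences
\[
D \text{ is a } *_1\text{-VIFD} \iff D \text{ is a } \widetilde{*_1}\text{-VIFD} \iff D \text{ is a } \widetilde{*_2}\text{-VIFD} \iff D \text{ is a } *_2\text{-VIFD},
\]
which is exactly the claim. I do not anticipate any genuine obstacle here: all of the substantive work—namely, the interchangeability of $*$ and $\widetilde{*}$ in the definition of a $*$-VIFD—has already been carried out in Proposition~\ref{Proposition 3.3}, and this corollary is a purely formal consequence. The only point meriting a brief check is that both $*_1$ and $*_2$ are indeed of finite type, so that Proposition~\ref{Proposition 3.3} is applicable to each; this is guaranteed by the hypotheses of the statement.
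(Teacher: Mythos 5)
Your proposal is correct and follows exactly the paper's argument: the paper likewise derives the corollary as an immediate consequence of Proposition~\ref{Proposition 3.3}, chaining the equivalences $*_1$-VIFD $\Leftrightarrow$ $\widetilde{*_1}$-VIFD $\Leftrightarrow$ $\widetilde{*_2}$-VIFD $\Leftrightarrow$ $*_2$-VIFD via the hypothesis $\widetilde{*_1}=\widetilde{*_2}$. There is nothing to add.
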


\begin{proof}
It is an immediate consequence of Proposition~\ref{Proposition 3.3} that $D$ is a $*_1$-VIFD if and only if $D$ is a $\widetilde{*_1}$-VIFD if and only if $D$ is a $\widetilde{*_2}$-VIFD if and only if $D$ is a $*_2$-VIFD.
\end{proof}

\begin{corollary}\label{Corollary 3.5}
Let $D$ be an integral domain. The following statements are equivalent\textnormal{:}
\begin{enumerate}
\item $D$ is a VFD.
\item For each star operation $*$ of finite type on $D$, ${\rm Cl}_*(D)=\{0\}$ and every $*$-invertible $*$-ideal of $D$ is a finite $*$-product of valuation ideals.
\item For each star operation $*$ of finite type on $D$, $D$ is a $*$-VIFD and ${\rm Cl}_*(D)=\{0\}$.
\item There exists a star operation $*$ of finite type on $D$ such that ${\rm Cl}_*(D)=\{0\}$ and every $*$-invertible $*$-ideal of $D$ is a finite $*$-product of valuation ideals.
\item There exists a star operation $*$ of finite type on $D$ such that $D$ is a $*$-VIFD and ${\rm Cl}_*(D)=\{0\}$.
\end{enumerate}
\end{corollary}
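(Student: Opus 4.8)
The plan is to establish the cycle $(1)\Rightarrow(3)\Rightarrow(5)\Rightarrow(1)$ and to hook in $(2)$ and $(4)$ by a formal comparison. First I would record the purely bookkeeping reductions. By Proposition~\ref{Proposition 3.3}, for a fixed finite-type $*$ the condition ``$D$ is a $*$-VIFD'' is equivalent to ``every $*$-invertible $*$-ideal of $D$ is a finite $*$-product of valuation ideals'' (this is $(1)\Leftrightarrow(5)$ there). Substituting this equivalence into the quantified statements yields $(2)\Leftrightarrow(3)$ and $(4)\Leftrightarrow(5)$ at once, since in each pair the remaining conjunct ${\rm Cl}_*(D)=\{0\}$ is identical. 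Moreover $(3)\Rightarrow(5)$ is trivial because the $d$-operation is a star operation of finite type. Hence everything reduces to the two substantive implications $(5)\Rightarrow(1)$ and $(1)\Rightarrow(3)$.

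For $(5)\Rightarrow(1)$, fix a finite-type $*$ with $D$ a $*$-VIFD and ${\rm Cl}_*(D)=\{0\}$, and let $a\in D$ be a nonzero nonunit. Then $aD$ is a $*$-invertible $*$-ideal, so by Proposition~\ref{Proposition 3.3} it is a finite $*$-product $aD=(\prod_{i=1}^n J_i)_*$ of $*$-invertible valuation $*$-ideals $J_i$ (after discarding the factors equal to $D$). Since ${\rm Cl}_*(D)=\{0\}$, each $J_i$ is principal, say $J_i=u_iD$; as $J_i$ is a proper valuation ideal, $u_i$ is a valuation element. A $*$-product of principal ideals is principal, so $aD=(u_1\cdots u_n)D$, and absorbing the resulting unit into one factor shows that $a$ is a finite product of valuation elements. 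Thus $D$ is a VFD.

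The implication $(1)\Rightarrow(3)$ is where the real work lies. The $*$-VIFD part is immediate: if $a=u_1\cdots u_n$ with each $u_i$ a valuation element, then $aD=(aD)_*=(\prod_i u_iD)_*$ is a finite $*$-product of valuation ideals. The crux is ${\rm Cl}_*(D)=\{0\}$, i.e.\ that every $*$-invertible $*$-ideal is principal. Using Proposition~\ref{Proposition 3.3}(4) together with the fact that any factor of a $*$-invertible product is itself $*$-invertible, it suffices to prove that every $*$-invertible valuation $*$-ideal $J$ is principal. I would first observe that such a $J$ is automatically a $v$-ideal: for $x\in J_v$ one has $xJ^{-1}J\subseteq J$, hence $xD=(x(JJ^{-1}))_*\subseteq J_*=J$, so $J=J_v$; combined with $\widetilde{*}\le w\le t\le v$ and $J=J_{\widetilde{*}}$ this forces $J=J_w=J_t$. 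Therefore $J$ is a $t$-invertible valuation $t$-ideal, so by Corollary~\ref{Corollary 2.4} it lies in a \emph{unique} maximal $t$-ideal $M$, and $J=\bigcap_{N\in t\text{-}{\rm Max}(D)}J_N$ with $J_N=D_N$ for every $N\neq M$.

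To produce a generator I would choose a nonzero $a\in J$ with $J_M=aD_M$ and factor $a=u_1\cdots u_k$ into valuation elements via the VFD hypothesis. Each $u_i$ is homogeneous by Corollary~\ref{Corollary 2.6}(1), hence lies in a unique maximal $t$-ideal; setting $b=\prod_{u_i\in M}u_i$, every factor of $b$ has $M$ as its unique maximal $t$-ideal. Comparing localizations over $t\text{-}{\rm Max}(D)$ then finishes the argument: at $M$ the units $u_i\notin M$ drop out, giving $bD_M=aD_M=J_M$, while at each $N\neq M$ one has $b\notin N$, so $bD_N=D_N=J_N$. Since both $bD$ and $J$ equal the intersection of their localizations at the maximal $t$-ideals, $J=bD$ is principal, as desired. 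The main obstacle throughout is precisely this last implication: legitimizing the passage to the $t$-local picture (the $v$-ideal property and Corollary~\ref{Corollary 2.4}, which is needed because Example~\ref{Example 2.5} shows uniqueness can fail for the given $*$) and then matching the VFD factorization of $a$ against $J$ locally, where the homogeneity of valuation elements is exactly what lets the irrelevant factors be stripped away cleanly.
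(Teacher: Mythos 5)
Your proposal is correct, and its skeleton (the bookkeeping equivalences via Proposition~\ref{Proposition 3.3}, the trivial $(3)\Rightarrow(5)$, and the argument for $(5)\Rightarrow(1)$) matches the paper's proof; the genuine divergence is at the crux $(1)\Rightarrow(2)/(3)$. The paper settles the class-group claim in one line by quoting \cite[Corollary 2.3(1)]{cr20}, which says a VFD has trivial $t$-class group, and then deducing ${\rm Cl}_*(D)=\{0\}$ from the fact that every $*$-invertible $*$-ideal is a $t$-invertible $t$-ideal. You instead prove this from scratch inside the present paper's machinery: after reducing, via Proposition~\ref{Proposition 3.3} and the observation that a $*$-factor of a $*$-invertible $*$-product is itself $*$-invertible, to the principality of a single proper $*$-invertible valuation $*$-ideal $J$, you note $J=J_v$ (your computation $xJ^{-1}\subseteq D$, $xD=(xJJ^{-1})_*\subseteq J$ is the standard proof that $*$-invertible $*$-ideals are divisorial), so $J$ is a $t$-invertible valuation $t$-ideal contained in a unique maximal $t$-ideal $M$ by Corollary~\ref{Corollary 2.4}; you then match a VFD factorization $a=u_1\cdots u_k$ of a local generator of $J_M$ against $J$ locally over $t$-${\rm Max}(D)$, using homogeneity of valuation elements (Corollary~\ref{Corollary 2.6}(1)) to see that $b=\prod_{u_i\in M}u_i$ avoids every maximal $t$-ideal $N\neq M$, and the identities $J=J_w=\bigcap_N J_N$ and $bD=(bD)_w=\bigcap_N bD_N$ to conclude $J=bD$. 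All steps check out; two trivia you leave implicit are that $b$ is a nonempty product (since $a\in J\subseteq M$ and $M$ is prime, some $u_i\in M$; equivalently $J_M\subsetneq D_M$ rules out $b$ being a unit) and that the chosen local generator $a$ is a nonunit for the same reason. What each route buys: the paper's citation is shorter, whereas your argument is self-contained and in effect reproves \cite[Corollary 2.3(1)]{cr20} --- indeed its natural $*$-operation generalization --- from Corollaries~\ref{Corollary 2.4} and \ref{Corollary 2.6} alone, which nicely exposes why triviality of ${\rm Cl}_*(D)$ is forced by, rather than merely compatible with, the VFD hypothesis.
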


\begin{proof}
(1) $\Rightarrow$ (2) Let $*$ be a star operation of finite type on $D$. Recall from \cite[Corollary 2.3(1)]{cr20} that ${\rm Cl}_t(D)=\{0\}$, so ${\rm Cl}_*(D)=\{0\}$. Now let $J$ be a proper $*$-invertible $*$-ideal of $D$. Then $J=aD$ for some nonzero nonunit $a\in D$. Since $D$ is a VFD, $J$ is a finite product of principal valuation ideals of $D$. Thus, $J$ is a finite $*$-product of principal valuation ideals of $D$.

(2) $\Rightarrow$ (3), (4) $\Rightarrow$ (5) These follow directly from the fact that a nonzero principal ideal is a $*$-invertible $*$-ideal.

(2) $\Rightarrow$ (4), (3) $\Rightarrow$ (5) Obvious, since $d$ is a star operation of finite type on $D$.

(5) $\Rightarrow$ (1) Let $a\in D$ be a nonzero nonunit. Then $aD=(\prod_{i=1}^n I_i)_*$ for some $*$-invertible valuation $*$-ideals $I_i$ of $D$ by Proposition~\ref{Proposition 3.3}. Clearly, each $I_i$ is principal, and hence $aD$ is a finite $*$-product of principal valuation ideals of $D$. Consequently, $aD$ is a finite product of principal valuation ideals of $D$. Thus, $a$ is a finite product of valuation elements of $D$.
\end{proof}

\begin{corollary}\label{Corollary 3.6}
Let $D$ be an integral domain and let $*_1$ and $*_2$ be star operations of finite type on $D$ such that $*_1\leq *_2$. Then $D$ is a VFD if and only if $D$ is a $*_1$-VIFD and ${\rm Cl}_{*_2}(D)=\{0\}$. In particular, $D$ is a VFD if and only if $D$ is a VIFD and ${\rm Pic}(D)=\{0\}$.
\end{corollary}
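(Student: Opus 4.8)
The plan is to deduce both implications directly from Corollary~\ref{Corollary 3.5}, invoking the hypothesis $*_1\leq *_2$ only to pass between the two operations, and to obtain the ``in particular'' clause by specializing to the $d$-operation.

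First I would prove the forward implication. Assuming $D$ is a VFD, the equivalence (1)~$\Leftrightarrow$~(3) of Corollary~\ref{Corollary 3.5} tells us that for \emph{every} star operation $*$ of finite type on $D$, $D$ is a $*$-VIFD and ${\rm Cl}_*(D)=\{0\}$. Applying this with $*=*_1$ shows that $D$ is a $*_1$-VIFD, and applying it with $*=*_2$ shows that ${\rm Cl}_{*_2}(D)=\{0\}$; note that here the hypothesis $*_1\leq *_2$ is not even needed.

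For the converse I would start from the assumption that $D$ is a $*_1$-VIFD with ${\rm Cl}_{*_2}(D)=\{0\}$. Since $*_1\leq *_2$, the remark following Definition~\ref{Definition 3.1} (namely $(I_{*_1})_{*_2}=I_{*_2}$, so that a $*_1$-VIFD is a $*_2$-VIFD) shows that $D$ is a $*_2$-VIFD. Thus $*_2$ is a star operation of finite type for which $D$ is a $*$-VIFD and ${\rm Cl}_*(D)=\{0\}$, which is exactly condition (5) of Corollary~\ref{Corollary 3.5}. The implication (5)~$\Rightarrow$~(1) then gives that $D$ is a VFD, completing the main equivalence.

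Finally, for the ``in particular'' statement I would specialize to $*_1=*_2=d$, which is legitimate since $d$ is a star operation of finite type and $d\leq d$. The main equivalence then reads: $D$ is a VFD if and only if $D$ is a $d$-VIFD and ${\rm Cl}_d(D)=\{0\}$. A $d$-VIFD is by definition a VIFD, and since $d$ is the identity operation the $d$-invertible fractional $d$-ideals are precisely the invertible fractional ideals, whence $T_d(D)={\rm Inv}(D)$ and ${\rm Cl}_d(D)={\rm Inv}(D)/{\rm Prin}(D)={\rm Pic}(D)$. Therefore the condition ${\rm Cl}_d(D)=\{0\}$ is the same as ${\rm Pic}(D)=\{0\}$, giving the stated form. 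I expect no serious obstacle here: the whole corollary is a repackaging of Corollary~\ref{Corollary 3.5}, and the only two points needing a moment's care are the monotonicity remark that upgrades a $*_1$-VIFD to a $*_2$-VIFD and the identification ${\rm Cl}_d(D)={\rm Pic}(D)$.
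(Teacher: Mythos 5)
Your proof is correct and takes essentially the same route as the paper: both implications are immediate applications of Corollary~\ref{Corollary 3.5} together with the monotonicity remark following Definition~\ref{Definition 3.1}, and your identification ${\rm Cl}_d(D)={\rm Pic}(D)$ for the ``in particular'' clause is exactly right. The only cosmetic difference is in the converse, where the paper transfers the class-group hypothesis downward (since ${\rm Cl}_{*_1}(D)$ embeds in ${\rm Cl}_{*_2}(D)$, the hypothesis ${\rm Cl}_{*_2}(D)=\{0\}$ forces ${\rm Cl}_{*_1}(D)=\{0\}$) and invokes Corollary~\ref{Corollary 3.5} at $*_1$, whereas you transfer the VIFD hypothesis upward (a $*_1$-VIFD is a $*_2$-VIFD) and invoke it at $*_2$; both transfers are one-liners and either works.
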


\begin{proof}
If $D$ is a VFD, then it is an immediate consequence of Corollary~\ref{Corollary 3.5} that $D$ is a $*_1$-VIFD and ${\rm Cl}_{*_2}(D)=\{0\}$. Now let $D$ be a $*_1$-VIFD and ${\rm Cl}_{*_2}(D)=\{0\}$. Then ${\rm Cl}_{*_1}(D)=\{0\}$, and hence $D$ is a VFD by Corollary~\ref{Corollary 3.5}. The additional statement is clear.
\end{proof}

A $\pi$-domain is a VIFD, because a prime ideal is a valuation ideal. We next study the relationship between a VFD and a VIFD, which is an analog of the fact that a UFD is a $\pi$-domain with trivial Picard group.

\begin{corollary}\label{Corollary 3.7}
Let $D$ be a Krull domain\textnormal{:}
\begin{enumerate}
\item $D$ is a $t$-VIFD.
\item $D$ is a VFD if and only if $D$ is a UFD.
\item $D$ is a VIFD if and only if $D$ is a $\pi$-domain.
\end{enumerate}
\end{corollary}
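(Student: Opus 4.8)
The plan is to handle the three parts in order, leaning on the classical divisor-theoretic structure of Krull domains together with Corollary~\ref{Corollary 3.6}. For (1), recall that in a Krull domain every nonzero divisorial ideal is a finite $t$-product (equivalently, an intersection) of symbolic powers of height-one primes; in particular, for a nonzero nonunit $a\in D$ one has $aD=(\prod_P P^{(n_P)})_t$, where $P$ ranges over the finitely many height-one primes containing $a$, $n_P=v_P(a)$, and $P^{(n)}=P^nD_P\cap D$. Thus it suffices to observe that each symbolic power is a valuation ideal: with the valuation overring $V=D_P$ we have $P^{(n)}V=P^nD_P$, so $P^{(n)}V\cap D=(P^nD_P)\cap D=P^{(n)}$. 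Hence every nonzero principal ideal is a finite $t$-product of valuation ideals, and $D$ is a $t$-VIFD.

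For (2), I would apply Corollary~\ref{Corollary 3.6} with $*_1=*_2=t$: $D$ is a VFD if and only if $D$ is a $t$-VIFD and ${\rm Cl}_t(D)=\{0\}$. By part (1) the first condition holds automatically, so $D$ is a VFD if and only if ${\rm Cl}_t(D)=\{0\}$. Since for a Krull domain ${\rm Cl}_t(D)$ is the usual divisor class group, which vanishes exactly when $D$ is a UFD, the claim follows.

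For (3), the implication \emph{$\pi$-domain $\Rightarrow$ VIFD} is already recorded in the text (prime ideals are valuation ideals and the relevant product is an ordinary product). For the converse, assume $D$ is a Krull VIFD, and I will show every height-one prime $P$ is invertible. Choose $a\in D$ with $v_P(a)=1$ and write $aD=I_1\cdots I_n$ with each $I_i$ a valuation ideal. Since $aD$ is invertible, each $I_i$ is invertible, hence divisorial. Passing to the DVR $D_P$ and comparing values, $v_P(a)=\sum_i v_P(I_i)=1$ forces exactly one factor, say $I_j$, to lie in $P$, with $(I_j)_P=PD_P$, while the others localize to $D_P$. By Proposition~\ref{Proposition 2.3}(2), $\sqrt{I_j}$ is a nonzero prime contained in the height-one prime $P$, so $\sqrt{I_j}=P$; consequently $(I_j)_P=PD_P=P_P$ and $(I_j)_Q=D_Q=P_Q$ for every other height-one prime $Q$. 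As $I_j$ and $P$ are both divisorial and agree at all height-one localizations, they coincide, whence $P=I_j$ is invertible. With all height-one primes invertible, the factorization $aD=(\prod_P P^{(v_P(a))})_t$ of part (1) collapses to $aD=\prod_P P^{v_P(a)}$ (for invertible $P$ one has $P^{(n)}=P^n$, and the $t$-product of invertible ideals is their ordinary product), exhibiting every nonzero principal ideal as a finite product of prime ideals; thus $D$ is a $\pi$-domain.

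The main obstacle is the key step in (3): showing that the distinguished valuation factor $I_j$ with $\sqrt{I_j}=P$ is exactly $P$, rather than merely a higher symbolic power. This is where invertibility is essential, since it upgrades $I_j$ to a divisorial ideal recoverable from its height-one localizations, and the normalization $v_P(a)=1$ is precisely what pins the local exponent down to $1$.
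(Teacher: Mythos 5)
Your proof is correct and, in outline, parallels the paper's: your part (1) is the standard symbolic-power factorization that the paper simply declares ``clear,'' and your part (2) via Corollary~\ref{Corollary 3.6} with $*_1=*_2=t$ is the same argument the paper runs through Corollary~\ref{Corollary 3.5} (Krull is UFD iff ${\rm Cl}_t(D)=\{0\}$). The genuine divergence is in the converse of (3). Both you and the paper reduce to showing each height-one prime $P$ is invertible, and both normalize by choosing $a$ with $aD_P=P_P$ and extracting from the factorization of $aD$ an invertible valuation factor inside $P$. But the paper identifies that factor $Q$ with $P$ in one stroke by invoking Proposition~\ref{Proposition 2.3}(1) to get $Q_P\cap D=Q$, whence $P\subseteq P_P\cap D=aD_P\cap D\subseteq Q_P\cap D=Q\subseteq P$; you instead count values in the DVR $D_P$ to force $(I_j)_P=PD_P$ with all other factors localizing trivially, use Proposition~\ref{Proposition 2.3}(2) to get $\sqrt{I_j}=P$, and conclude $I_j=P$ from the classical fact that divisorial ideals of a Krull domain are determined by their localizations at height-one primes. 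Your route is a bit longer but stays entirely within standard Krull-domain machinery, bypassing the paper's localization lemma at that step. A second difference: the paper passes from invertible height-one primes to the $\pi$-property by citing \cite[Theorem 1]{a78}, whereas you verify it directly by collapsing $aD=\bigl(\prod_P P^{(v_P(a))}\bigr)_t$ into an ordinary product of powers of primes (using $P^{(n)}=P^n$ for invertible $P$, and that a product of invertible divisorial ideals is already a $t$-product) --- a self-contained finish replacing the external citation. All steps in your argument check out.
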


\begin{proof}
(1) This is clear.

(2) A Krull domain is a UFD if and only if its $t$-class group is trivial. Consequently, the result follows from (1) and Corollary~\ref{Corollary 3.5}.

(3) Observe that $t$-$\dim(D)\leq 1$. In general, a $\pi$-domain is a VIFD. Conversely, suppose that $D$ is a VIFD. It suffices to show that each height-one prime ideal of $D$ is invertible by \cite[Theorem 1]{a78}. Let $P$ be a height-one prime ideal. Choose $a\in P$ such that $aD_P=P_P$. Then $aD$ is a finite product of valuation ideals, so $P$ contains an invertible valuation ideal, say $Q$, containing $a$. By Proposition~\ref{Proposition 2.3}, $Q_P\cap D=Q$, and since $P_P=aD_P\subseteq Q_P$, it follows that $Q=P$. Thus, $P$ is invertible.
\end{proof}

Following \cite{ad14,dz11}, we say that $D$ is a {\em $*$-Schreier domain} if for all $*$-invertible $*$-ideals $I$, $J$ and $L$ of $D$ such that $(JL)_*\subseteq I$, there are some $*$-invertible $*$-ideals $J^{\prime}$ and $L^{\prime}$ of $D$ such that $J\subseteq J^{\prime}$, $L\subseteq L^{\prime}$ and $I=(J^{\prime}L^{\prime})_*$. Recall from \cite[Corollary 3.3]{ad14} that $D$ is a Schreier domain if and only if $D$ is an integrally closed $d$-Schreier domain with ${\rm Pic}(D)=\{0\}$, so there is a clear distinction between the concepts of Schreier and $d$-Schreier domains; see \cite[Proposition 2]{dz11}.

\begin{theorem}\label{Theorem 3.8}
Let $D$ be an integral domain and let $*$ be a star operation of finite type on $D$. Then $D$ is a $*$-VIFD if and only if $D$ is a $*$-Schreier domain and each nonzero prime $*$-ideal of $D$ contains a $*$-invertible valuation $(*$-$)$ideal of $D$.
\end{theorem}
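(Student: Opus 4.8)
The plan is to prove the two implications separately, using Proposition~\ref{Proposition 2.12} as the main engine for the forward direction and the comparability results of Corollary~\ref{Corollary 2.10} together with Proposition~\ref{Proposition 2.11} for the converse.

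\emph{The forward direction.} Assume $D$ is a $*$-VIFD and write $\Omega$ for the set of finite $*$-products of $*$-invertible valuation $*$-ideals of $D$, as in Proposition~\ref{Proposition 2.12}. First I would record that every element of $\Omega$ is an integral $*$-invertible $*$-ideal, since a valuation ideal $I$ satisfies $I=IV\cap D\subseteq D$ and the $*$-closure of an integral ideal is integral. By Proposition~\ref{Proposition 3.3}(4), every $*$-invertible $*$-ideal of $D$ lies in $\Omega$. To verify the $*$-Schreier condition, take $*$-invertible $*$-ideals $I,J,L$ with $(JL)_*\subseteq I$; then $I,J,L\in\Omega$, so Proposition~\ref{Proposition 2.12} yields $J',J'',L',L''\in\Omega$ with $I=(J'L')_*$, $J=(J'J'')_*$ and $L=(L'L'')_*$. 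Since $J',J''\subseteq D$ we obtain $J=(J'J'')_*\subseteq J'$, and likewise $L\subseteq L'$, while $J'$ and $L'$ are $*$-invertible $*$-ideals; this is precisely the $*$-Schreier property. For the second condition, given a nonzero prime $*$-ideal $P$ I would pick a nonzero $a\in P$, write $aD=(\prod_{i=1}^n I_i)_*$ with each $I_i$ a $*$-invertible valuation $*$-ideal (Proposition~\ref{Proposition 3.3}, noting each factor divides the $*$-invertible ideal $aD$ and is hence $*$-invertible), and observe that $\prod_{i=1}^n I_i\subseteq aD\subseteq P$ forces some $I_j\subseteq P$.

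\emph{The converse.} Assume $D$ is $*$-Schreier and each nonzero prime $*$-ideal contains a $*$-invertible valuation $*$-ideal (using Lemma~\ref{Lemma 3.2} to upgrade valuation ideals to valuation $*$-ideals). By Proposition~\ref{Proposition 3.3} it suffices to write an arbitrary nonzero nonunit principal ideal $aD$ as a finite $*$-product of $*$-invertible valuation $*$-ideals. The strategy I would follow is to produce one valuation factor per minimal prime: for a minimal prime $P$ of $aD$ I would localize at $P$, so that $\sqrt{(aD)_P}=P_P$ becomes the unique minimal prime and the radicals are aligned so that Corollary~\ref{Corollary 2.10}(2) applies to a valuation $*$-ideal supplied by the prime hypothesis and forces it to be comparable to $aD$. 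The $*$-Schreier property, in the refined form of Proposition~\ref{Proposition 2.12}, is then used to split off a genuine valuation divisor, and I would descend back to $D$ using that $*$-invertible $*$-ideals are $t$-ideals, hence determined by their localizations at the maximal $t$-ideals.

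I expect the main obstacle to be the converse, in two respects. First, one must establish finiteness: that $\mathcal{P}(aD)$ is finite and, more, that there is enough local control around $a$ for the finitely many local contributions to be reassembled. Second, and this is the technical heart, one must produce for each minimal prime $P$ of $aD$ a $*$-invertible valuation $*$-ideal $V_P$ that actually \emph{divides} $aD$ (so $aD\subseteq V_P$) with $\sqrt{V_P}=P$; the hypothesis only supplies a valuation $*$-ideal sitting \emph{inside} $P$, whose radical may be strictly smaller, and Corollary~\ref{Corollary 2.10}(2) aligns radicals only in the favorable direction. I would overcome this via the localization step above, extracting the valuation divisor where $P_P$ is the unique minimal prime and descending, while Corollary~\ref{Corollary 2.10}(1),(4) control the incomparable radicals among distinct minimal primes. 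Once the $V_P$ are in hand, their rigidity (Corollary~\ref{Corollary 2.10}(1)) makes the factors for distinct $P$ suitably independent, and Proposition~\ref{Proposition 2.11} together with the $*$-Schreier refinement identifies $\bigl(\prod_{P\in\mathcal{P}(aD)} V_P\bigr)_*$ with $aD$, completing the proof that $D$ is a $*$-VIFD.
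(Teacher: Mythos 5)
Your forward direction is correct and is essentially the paper's argument: Proposition~\ref{Proposition 3.3} puts every $*$-invertible $*$-ideal in the set $\Omega$ of finite $*$-products of $*$-invertible valuation $*$-ideals, Proposition~\ref{Proposition 2.12} then yields the $*$-Schreier property (your observation that $J=(J^{\prime}J^{\prime\prime})_*\subseteq J^{\prime}$ because $J^{\prime\prime}\subseteq D$ is exactly the point the paper leaves implicit), and the prime condition follows by factoring $aD$ for $a\in P$.

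The converse, however, has a genuine gap, and you have in fact put your finger on it yourself: the hypothesis supplies, inside each nonzero prime $*$-ideal, some $*$-invertible valuation $*$-ideal whose radical may be strictly smaller, and nothing in your sketch converts this into a valuation divisor of $aD$. The proposed fix via localization does not close the hole. First, the finiteness of $\mathcal{P}(aD)$ is not available a priori (it is a consequence of the factorization, via Corollary~\ref{Corollary 2.4}, not an ingredient), and you give no mechanism to prove it. Second, even granting that $aD_P$ becomes a valuation ideal of $D_P$ for each $P\in\mathcal{P}(aD)$ (and note that your iterative splitting inside $D_P$ has no termination argument, since $D_P$ need not be a valuation domain or Noetherian here --- the treed/P$*$MD setting of Theorem~\ref{Theorem 3.15} is precisely what you do not have), there is no way to descend: reassembling local data into a global $*$-factorization $aD=(\prod_P I_P)_*$ with each $I_P$ a $*$-invertible valuation $*$-ideal requires finite $*$-character or $*$-${\rm h}$-locality, as in the proof of Theorem~\ref{Theorem 3.15}(1)$\Rightarrow$(2), and neither is known at this stage. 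Your final appeal to Proposition~\ref{Proposition 2.11} is also circular, since that proposition applies only to ideals already known to lie in $\Omega$. The paper's actual proof avoids producing valuation divisors altogether: assuming some $*$-invertible $*$-ideal $I\notin\Omega$, it applies Zorn's lemma to $\Sigma=\{J\mid J$ a $*$-ideal, $I\subseteq J$, $A\nsubseteq J$ for all $A\in\Omega\}$ (inductively ordered because members of $\Omega$ are $*$-finite), shows a maximal element is a nonzero prime $*$-ideal by the standard multiplicative-avoidance argument, and derives a contradiction with the hypothesis; the $*$-Schreier property is used exactly once, to show $I\in\Sigma$ in the first place --- namely, that a $*$-invertible $*$-ideal containing a member of $\Omega$ lies in $\Omega$, where Proposition~\ref{Proposition 2.9}(3) upgrades the Schreier factors to valuation ideals. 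Replacing your localization plan with this saturation-plus-Zorn argument is the missing idea.
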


\begin{proof}
($\Rightarrow$) First let $D$ be a $*$-VIFD. Then every $*$-invertible $*$-ideal of $D$ is a finite $*$-product of $*$-invertible valuation $*$-ideals of $D$ by Proposition~\ref{Proposition 3.3}. In particular, the set of $*$-invertible $*$-ideals is the set of finite $*$-products of $*$-invertible valuation $*$-ideals of $D$. We infer by Proposition~\ref{Proposition 2.12} that $D$ is a $*$-Schreier domain. Now let $P$ be a nonzero prime $*$-ideal of $D$. Then there is a nonzero nonunit $a\in D$ such that $aD\subseteq P$. Since $aD$ is a finite $*$-product of $*$-invertible valuation $*$-ideals of $D$, we have that $P$ contains a $*$-invertible valuation $*$-ideal of $D$.

($\Leftarrow$) Now let $D$ be a $*$-Schreier domain such that each nonzero prime $*$-ideal of $D$ contains a $*$-invertible valuation $*$-ideal of $D$. Let $\Omega$ be the set of all finite $*$-products of $*$-invertible valuation $*$-ideals of $D$. Assume that $D$ is not a $*$-VIFD. Then there exists a $*$-invertible $*$-ideal $I$ of $D$ such that $I\not\in\Omega$. Let $\Sigma=\{J\mid J$ is a $*$-ideal of $D$ such that $I\subseteq J$ and $A\nsubseteq J$ for each $A\in\Omega\}$. Assume that $I\not\in\Sigma$. Then there are $n\in\mathbb{N}$ and $*$-invertible valuation $*$-ideals $I_i$ of $D$ such that $(\prod_{i=1}^n I_i)_*\subseteq I$. Since $D$ is a $*$-Schreier domain, it follows (by induction) that there are some $*$-invertible $*$-ideals $J_i$ of $D$ such that $I=(\prod_{i=1}^n J_i)_*$ and $I_j\subseteq J_j$ for each $j\in [1,n]$. We infer by Proposition~\ref{Proposition 2.9}(3) that $J_j$ is a valuation ideal of $D$ for each $j\in [1,n]$. Therefore, $I\in\Omega$, a contradiction. Hence, $I\in\Sigma$. Then $\Sigma\not=\emptyset$, and since $*$ is of finite type and each element of $\Omega$ is $*$-finite, we have that $\Sigma$ is ordered inductively (under inclusion). Consequently, there is a maximal element $P\in\Sigma$ by Zorn's lemma. We show that $P$ is a prime $*$-ideal of $D$. Clearly, $P$ is a proper $*$-ideal of $D$. Assume that $P$ is not a prime ideal of $D$, then there are $a,b\in D$ such that $ab\in P$ and $a,b\not\in P$. We have that $P\subsetneq (P+aD)_*$ and $P\subsetneq (P+bD)_*$, and hence $(P+aD)_*\not\in\Sigma$ and $(P+bD)_*\not\in\Sigma$. Consequently, there are $A,B\in\Omega$ such that $A\subseteq (P+aD)_*$ and $B\subseteq (P+bD)_*$. This implies that $(AB)_*\subseteq (P^2+aP+bP+abD)_*\subseteq P$ and $(AB)_*\in\Omega$, a contradiction. Hence, $P$ is a prime $*$-ideal of $D$. Now since $I\subseteq P$, we have that $P$ is nonzero, and thus $P$ contains a $*$-invertible valuation $*$-ideal $J$ of $D$. Note that $J\in\Omega$, a contradiction.
\end{proof}

The next result is a valuation ideal analog of \cite[Proposition 1.7(4)]{cr20} that if $a\in D$ is a valuation element, then either $a$ is a unit of $D_S$ or $a$ is a valuation element of $D_S$ for any multiplicatively closed subset $S$ of $D$.

\begin{lemma}\label{Lemma 3.9}
Let $D$ be an integral domain, let $S$ be a multiplicatively closed subset of $D$ and let $I$ be a valuation ideal of $D$. Then $I_S$ is a valuation ideal of $D_S$.
\end{lemma}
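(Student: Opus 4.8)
The plan is to manufacture, from a valuation overring $V$ of $D$ that witnesses $IV\cap D=I$, an explicit valuation overring of $D_S$ that witnesses the corresponding equality for $I_S$. By hypothesis there is a valuation overring $V$ of $D$ with $IV\cap D=I$. The tempting first move is to reuse $V$ itself, but this fails in general: $V$ need not be an overring of $D_S$, because an element $s\in S$ need not be a unit of $V$, so $\frac{1}{s}\notin V$. This mismatch is the main obstacle, and I would remove it by localizing $V$ at $S$.

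Concretely, I would set $W=V_S=S^{-1}V$ and first record that $W$ is a valuation overring of $D_S$. Indeed, $W$ is a localization of the valuation domain $V$, hence an overring of $V$ inside $K$, and every overring of a valuation domain is again a valuation domain; moreover $D_S=S^{-1}D\subseteq S^{-1}V=W\subseteq K$, and $D_S$ has quotient field $K$, so $W$ is a valuation overring of $D_S$. By construction every $s\in S$ is now a unit of $W$, which is precisely what the naive choice lacked.

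The core of the argument is then the identity $I_SW\cap D_S=I_S$. The inclusion $I_S\subseteq I_SW\cap D_S$ is immediate. For the reverse inclusion I would compute $I_SW=S^{-1}I\cdot S^{-1}V=S^{-1}(IV)$, which reduces the claim to showing that localization commutes with the relevant intersection, i.e. $S^{-1}(IV)\cap S^{-1}D\subseteq S^{-1}(IV\cap D)$. I expect this to be the only step needing genuine (if short) work: given $x\in S^{-1}(IV)\cap S^{-1}D$, write $x=\frac{a}{s}=\frac{b}{t}$ with $a\in IV$, $b\in D$ and $s,t\in S$; clearing denominators in $K$ gives $ta=sb$, and since $t\in D\subseteq V$ while $s,b\in D$, this common value lies in $IV\cap D=I$, whence $x=\frac{b}{t}=\frac{sb}{st}\in S^{-1}I=I_S$.

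Finally, combining $IV\cap D=I$ with $W=S^{-1}V$ and the intersection identity just established yields $I_SW\cap D_S=S^{-1}(IV\cap D)=I_S$, so $W$ witnesses that $I_S$ is a valuation ideal of $D_S$. I would note that this argument requires no case distinction: it handles the degenerate possibilities $I=0$ and $I_S=D_S$ uniformly, since the computation above is insensitive to them.
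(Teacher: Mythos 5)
Your proposal is correct and is essentially the paper's own argument: the paper likewise takes $W=V_S$, notes it is a valuation overring of $D_S$, and computes $I_S=(IV\cap D)_S=I_SV_S\cap D_S$. You merely spell out the exactness-of-localization step $(IV)_S\cap D_S=(IV\cap D)_S$ that the paper uses without comment.
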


\begin{proof}
Let $V$ be a valuation overring of $D$ such that $IV\cap D=I$. Then $I_S=(IV\cap D)_S=I_SV_S\cap D_S$ and $V_S$ is a valuation overring of $D_S$. Thus, $I_S$ is a valuation ideal of $D_S$.
\end{proof}

Let $*$ be a star operation of finite type on $D$ and let $*_S:F(D_S)\rightarrow F(D_S)$ be defined by $(I_S)_{*_S}=(I_*)_S$ for each $I\in F(D)$. Then $*_S$ is a star operation of finite type on $D_S$ \cite[4.4 Theorem]{h98}. If $P$ is a prime ideal of $D$ such that $S=D\setminus P$, then we write $*_P$ instead of $*_S$.

\begin{proposition}\label{Proposition 3.10}
Let $D$ be an integral domain, let $*$ be a star operation of finite type on $D$ and let $P$ be a prime $*$-ideal of $D$. If $D$ is a $*$-VIFD, then $D_P$ is a VFD.
\end{proposition}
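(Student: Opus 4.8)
The plan is to reduce everything to Corollary~\ref{Corollary 3.6}, applied to the localization $D_P$ together with the induced star operation $*_P$, taking $*_1=*_2=*_P$ (which is legitimate since $*_P\leq *_P$). As recorded just before the statement, $*_P$ is a star operation of finite type on $D_P$, so Corollary~\ref{Corollary 3.6} says that $D_P$ is a VFD if and only if $D_P$ is a $*_P$-VIFD and ${\rm Cl}_{*_P}(D_P)=\{0\}$. I would therefore verify these two conditions separately.

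First, to see that $D_P$ is a $*_P$-VIFD, I would take an arbitrary nonzero principal ideal of $D_P$, which can be written as $aD_P$ for some nonzero $a\in D$. Since $D$ is a $*$-VIFD, write $aD=(\prod_{i=1}^n I_i)_*$ with each $I_i$ a valuation ideal of $D$. Localizing at $P$ and using the defining identity $(B_*)_P=(B_P)_{*_P}$ for the induced operation, I obtain $aD_P=(\prod_{i=1}^n (I_i)_P)_{*_P}$. By Lemma~\ref{Lemma 3.9}, each $(I_i)_P$ is a valuation ideal of $D_P$, so $aD_P$ is a finite $*_P$-product of valuation ideals of $D_P$, as required.

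Second, to see that ${\rm Cl}_{*_P}(D_P)=\{0\}$, the key observation is that $PD_P$ is the \emph{unique} maximal $*_P$-ideal of $D_P$: it is the maximal ideal of the quasi-local ring $D_P$, and it is a $*_P$-ideal because $P$ is a $*$-ideal of $D$, giving $(PD_P)_{*_P}=(P_*)_P=P_P=PD_P$. Now any $*_P$-invertible $*_P$-ideal $J$ of $D_P$ is $*_P$-locally principal; since $D_P$ localized at its only maximal $*_P$-ideal $PD_P$ is $D_P$ itself, $J$ is already principal. Hence every $*_P$-invertible $*_P$-ideal of $D_P$ is principal, so the $*_P$-class group is trivial. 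Combining the two conditions with Corollary~\ref{Corollary 3.6} yields that $D_P$ is a VFD.

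The step requiring the most care is the second one: one must confirm that $PD_P$ genuinely serves as the unique maximal $*_P$-ideal of $D_P$, which rests on the fact that $P$ being a prime $*$-ideal of $D$ forces $PD_P$ to be a $*_P$-ideal of $D_P$ through the localization formula for $*_P$. Once this is secured, the triviality of ${\rm Cl}_{*_P}(D_P)$ and the final appeal to Corollary~\ref{Corollary 3.6} are immediate; the first condition is essentially a routine localization argument built on Lemma~\ref{Lemma 3.9}.
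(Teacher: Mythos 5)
Your proof is correct, but its second half takes a different route from the paper's. The first step is exactly the paper's computation: localize the factorization $aD=(\prod_{i=1}^n I_i)_*$ using $(B_*)_P=(B_P)_{*_P}$ and apply Lemma~\ref{Lemma 3.9}. From there the paper never touches class groups: it observes that each factor $I_i$, being a $*$-factor of the principal ideal $aD$, is $*$-invertible and hence $*$-locally principal, so $(I_i)_P=a_iD_P$ is already principal (the hypothesis that $P$ is a prime $*$-ideal ensures $P$ lies under a maximal $*$-ideal, so localizing at $P$ refines localizing there); the displayed computation then exhibits $a$, up to a unit of $D_P$, directly as $\prod_{i=1}^n a_i$ with each $a_i$ a unit or a valuation element of $D_P$ by Lemma~\ref{Lemma 3.9}, which is the definition of a VFD. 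You instead prove that $D_P$ is a $*_P$-VIFD, show ${\rm Cl}_{*_P}(D_P)=\{0\}$ by identifying $PD_P$ as the unique maximal $*_P$-ideal --- which is precisely where the hypothesis $P_*=P$ enters your argument, via $(P_P)_{*_P}=(P_*)_P=P_P$ --- and then invoke Corollary~\ref{Corollary 3.6} (equivalently Corollary~\ref{Corollary 3.5}). Both arguments ultimately rest on the same ingredient, namely that $*$-invertible $*$-ideals are $*$-locally principal for a finite type operation ($*_P$ is of finite type by \cite[4.4 Theorem]{h98}, as the paper records before the statement); your version isolates the reusable observation that a quasi-local domain whose maximal ideal is a star ideal has trivial star class group, and it makes transparent exactly where the assumption that $P$ is a $*$-ideal (rather than an arbitrary prime) is needed, at the cost of routing through Corollary~\ref{Corollary 3.6}, whereas the paper's argument is shorter and produces the valuation-element factorization of $a$ explicitly. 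One cosmetic point: the local-principality fact is stated in the paper for integral $*$-ideals, so for a fractional $*_P$-invertible $*_P$-ideal $J$ you should first scale by a nonzero $d$ with $dJ\subseteq D_P$; since principality is preserved under scaling, this is immediate.
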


\begin{proof}
Let $a\in D$ be a nonzero nonunit of $D_P$. Then $aD=(\prod_{i=1}^n I_i)_*$ for some $n\in\mathbb{N}$ and valuation $*$-ideals $I_i$ of $D$. If $j\in [1,n]$, then $I_j$ is $*$-invertible, and hence $(I_j)_P=a_jD_P$ for some $a_j\in I_j$. Therefore,

\begin{eqnarray*}
aD_P &=& \Big(\Big(\prod_{i=1}^n I_i\Big)_*\Big)_P=\Big(\Big(\prod_{i=1}^n I_i\Big)_P\Big)_{*_P}\\
&=& \Big(\prod_{i=1}^n (I_i)_P\Big)_{*_P}\\
&=& \Big(\prod_{i=1}^n (a_iD_P)\Big)_{*_P}\\
&=& \Big(\prod_{i=1}^n a_i\Big)D_P.
\end{eqnarray*}

\noindent
We infer by Lemma~\ref{Lemma 3.9} that $a_i$ is either a unit of $D_P$ or a valuation element of $D_P$ for each $i\in [1,n]$. Consequently, $D_P$ is a VFD.
\end{proof}

An integral domain is called a {\em Mori domain} if it satisfies the ACC on $t$-ideals. Moreover, we say that $D$ is {\em completely integrally closed} if for each $x\in K$ for which there is some nonzero $c\in D$ such that $cx^n\in D$ for all $n\in\mathbb{N}$, it follows that $x\in D$. Observe that $D$ is a {\em Krull domain} if and only if $D$ is a completely integrally closed Mori domain \cite[Theorem 2.3.11]{gh06}. The purpose of the next result is to generalize \cite[Corollary 2.4]{cr20} that characterizes when a VFD is a Mori domain.

\begin{corollary}\label{Corollary 3.11}
Let $D$ be a $t$-VIFD. The following statements are equivalent\textnormal{:}
\begin{enumerate}
\item $D$ is a Mori domain.
\item $D_M$ is a UFD for each $M\in t$-${\rm Max}(D)$.
\item $D$ is a Krull domain.
\end{enumerate}
\end{corollary}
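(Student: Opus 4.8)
The plan is to prove the cycle of implications (1) $\Rightarrow$ (2) $\Rightarrow$ (3) $\Rightarrow$ (1), exploiting that a $t$-VIFD localizes well at its maximal $t$-ideals via Proposition~\ref{Proposition 3.10}. The guiding idea is that the $t$-VIFD hypothesis forces finite $t$-character and VFD localizations, while each of the three conditions is precisely what is needed to upgrade these local pieces to \emph{Krull} data.

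For (1) $\Rightarrow$ (2), let $M\in t$-${\rm Max}(D)$; this is a prime $t$-ideal, so Proposition~\ref{Proposition 3.10} (applied with $\ast=t$) gives that $D_M$ is a VFD. Since localizations of Mori domains are again Mori, $D_M$ is a Mori domain; and because a Mori domain satisfies the ascending chain condition on principal ideals and is therefore atomic, $D_M$ is an \emph{atomic} VFD. By \cite[Corollary 2.4]{cr20} an atomic VFD is a UFD, so $D_M$ is a UFD. The only ingredient external to the excerpt here is the stability of the Mori property under localization, which is standard; everything else is internal.

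For (2) $\Rightarrow$ (3) I would first record that a $t$-VIFD has finite $t$-character. Writing a nonzero nonunit $a$ as $aD=(\prod_{i=1}^n I_i)_t$ where, by Lemma~\ref{Lemma 3.2} and Proposition~\ref{Proposition 3.3}, each $I_i$ is a proper $t$-invertible valuation $t$-ideal, Corollary~\ref{Corollary 2.4} shows each $I_i$ lies in a unique maximal $t$-ideal $M_i$; any maximal $t$-ideal containing $aD$ must contain some $I_j$ and hence equal $M_j$, so $a$ lies in only finitely many maximal $t$-ideals (this is exactly the argument already used inside the proof of Proposition~\ref{Proposition 3.3}). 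Now use $D=\bigcap_{M\in t\text{-}{\rm Max}(D)} D_M$. Each $D_M$ is a UFD, hence a Krull domain, so $D_M$ is a locally finite intersection of its localizations $(D_M)_Q$ at the height-one primes $Q$ of $D_M$, and each such $(D_M)_Q$ is a DVR with quotient field $K$. Substituting yields $D=\bigcap_M\bigcap_Q (D_M)_Q$, an intersection of DVRs with quotient field $K$. This family is locally finite: a nonzero nonunit $x\in D$ is a nonunit of $D_M$ only when $x\in M$, which happens for finitely many $M$ by finite $t$-character, and for each such $M$ the Krull domain $D_M$ makes $x$ a nonunit of only finitely many $(D_M)_Q$. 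Hence $D$ is a Krull domain by the standard characterization of Krull domains as locally finite intersections of DVRs \cite{gh06}.

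Finally, (3) $\Rightarrow$ (1) is immediate from the recalled fact that a Krull domain is exactly a completely integrally closed Mori domain \cite[Theorem 2.3.11]{gh06}. I expect the main obstacle to lie in the bookkeeping of (2) $\Rightarrow$ (3): one must carefully verify that the doubly-indexed family $\{(D_M)_Q\}$ is genuinely locally finite, correctly combining the finite $t$-character of $D$ with the internal finite character of each Krull localization $D_M$; the remaining steps are either direct applications of the cited results or the single standard external fact that the Mori property survives localization.
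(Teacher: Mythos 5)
Your proposal is correct and takes essentially the same route as the paper: (1) $\Rightarrow$ (2) via Proposition~\ref{Proposition 3.10} together with localization of the Mori property and the atomic-VFD-is-UFD result of \cite{cr20}, (2) $\Rightarrow$ (3) via finite $t$-character from Corollary~\ref{Corollary 2.4} and the locally finite intersection $D=\bigcap_{M\in t\textnormal{-}{\rm Max}(D)} D_M$ of Krull domains, and (3) $\Rightarrow$ (1) from the characterization of Krull domains as completely integrally closed Mori domains. The only difference is that you spell out details the paper leaves implicit, namely Mori $\Rightarrow$ ACCP $\Rightarrow$ atomic and the explicit verification that the doubly indexed family of DVRs $(D_M)_Q$ is locally finite.
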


\begin{proof}
(1) $\Rightarrow$ (2) Let $M\in t$-${\rm Max}(D)$. Then $D_M$ is a Mori domain \cite[Proposition 2.10.4.2]{gh06}. Moreover, $D_M$ is a VFD by Proposition~\ref{Proposition 3.10}. Thus, $D_M$ is a UFD \cite[Corollary 2.4]{cr20}.

(2) $\Rightarrow$ (3) Clearly, $D_M$ is a Krull domain for each $M\in t$-${\rm Max}(D)$. Furthermore, $D$ is of finite $t$-character by Corollary~\ref{Corollary 2.4}. Consequently, $D$ is a Krull domain.

(3) $\Rightarrow$ (1) This is obvious.
\end{proof}

Let $*$ be a star operation of finite type on $D$. Next, we are going to show that a $*$-VIFD is an integrally closed weakly Matlis domain. However, since $*\leq t$, a $*$-VIFD is a $t$-VIFD, so it suffices to show that a $t$-VIFD is an integrally closed weakly Matlis domain.

\begin{proposition}\label{Proposition 3.12}
Let $D$ be an integral domain such that every nonzero prime $t$-ideal of $D$ contains a $t$-invertible valuation ideal of $D$. Then $D$ is an integrally closed weakly Matlis domain.
\end{proposition}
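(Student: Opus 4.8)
The plan is to establish, separately, the two defining conditions of a weakly Matlis (equivalently $t$-${\rm h}$-local) domain---unique containment of prime $t$-ideals in maximal $t$-ideals, and finite $t$-character---together with integral closedness. The unique-containment condition is immediate. If $P$ is a nonzero prime $t$-ideal, then by hypothesis $P$ contains a $t$-invertible valuation ideal $I$, and by Lemma~\ref{Lemma 3.2} the ideal $I_t\subseteq P_t=P$ is a proper $t$-invertible valuation $t$-ideal. Corollary~\ref{Corollary 2.4} yields a \emph{unique} maximal $t$-ideal containing $I_t$, so every maximal $t$-ideal containing $P$ (hence $I_t$) must be that one; thus $P$ lies in a unique maximal $t$-ideal.

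For integral closedness I would argue by contradiction through the conductor. Suppose $x\in K$ is integral over $D$ but $x\notin D$. Then $D[x]$ is a finitely generated $D$-module, so $\mathfrak{c}=(D:D[x])$ is a nonzero proper ideal; being the inverse of the fractional ideal $D[x]$, it is divisorial, hence a $t$-ideal. Pick a minimal prime $P$ of $\mathfrak{c}$, a nonzero prime $t$-ideal, and pass to $D_P$, where $\sqrt{\mathfrak{c}_P}=P_P$. Since $P$ lies under a unique maximal $t$-ideal where the ambient $t$-invertible valuation ideal is principal, localizing (with Lemma~\ref{Lemma 3.2} and Lemma~\ref{Lemma 3.9}) furnishes a valuation element $c$ of $D_P$ with $c\in P_P=\sqrt{\mathfrak{c}_P}$ and a valuation overring $V$ of $D_P$ with $cV\cap D_P=cD_P$; then $c^nV\cap D_P=c^nD_P$ for all $n$ by Lemma~\ref{Lemma 2.8}(1). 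Choosing $n$ with $c^n\in\mathfrak{c}_P$ gives $c^nx\in D_P$, while $x\in V$ because valuation overrings contain the integral closure, so $c^nx\in c^nV\cap D_P=c^nD_P$ and therefore $x\in D_P$---contradicting $\mathfrak{c}_P\subsetneq D_P$. Hence $D$ is integrally closed.

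It remains to prove finite $t$-character, which I expect to be the main obstacle. First I would reduce to showing that $aD$ has only finitely many minimal primes for each nonzero nonunit $a$: every maximal $t$-ideal containing $a$ lies above some minimal prime of $aD$ (a prime $t$-ideal, since $aD$ is a $t$-ideal), and by the unique-containment property already proved it is the \emph{only} maximal $t$-ideal above that minimal prime; hence distinct maximal $t$-ideals over $a$ sit over distinct minimal primes, and $|\{M\in t\text{-}{\rm Max}(D)\mid a\in M\}|\le|\mathcal{P}(aD)|$. For a minimal prime $P$ of $aD$ one has $\sqrt{aD_P}=P_P$; localizing the $t$-invertible valuation ideal inside $P$ gives a principal valuation ideal $cD_P$ of $D_P$ with $\sqrt{cD_P}\subseteq P_P=\sqrt{aD_P}$, so Corollary~\ref{Corollary 2.10}(2) makes $cD_P$ and $aD_P$ comparable. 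A short case analysis then shows $P_P$ is the radical of a valuation element of $D_P$: if $aD_P\subseteq cD_P$ then $\sqrt{cD_P}=P_P$, while if $cD_P\subseteq aD_P$, writing $c=ae$, Lemma~\ref{Lemma 2.8}(2) shows $a$ itself is a valuation element; contracting a suitable valuation overring, $P$ is in either case the radical of a valuation ideal of $D$.

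The genuinely hard step is turning this local information into global finiteness of $\mathcal{P}(aD)$. The route I would pursue uses the homogeneity of $t$-invertible valuation $t$-ideals (Corollary~\ref{Corollary 2.4}): in the case where the comparison places $a$ inside the valuation ideal, the relevant $t$-invertible valuation $t$-ideal $I$ satisfies $\sqrt{I}=P$ globally and $a^m\in I$, so $I$ can be split off $a^mD$ as a $t$-factor and one attempts to iterate, each step removing one minimal prime, with the complementary case absorbed by the contracted valuation ideal. To force termination I would couple this with a maximality/Zorn argument in the spirit of the proof of Theorem~\ref{Theorem 3.8}, aiming to show that infinitely many minimal primes of $aD$ would produce a prime $t$-ideal avoiding all the associated valuation ideals, contradicting the hypothesis. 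Making this globalization precise---rather than the unique-containment and integral-closedness steps, which are routine---is where the real work lies.
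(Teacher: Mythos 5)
Your first two components are correct. The unique-containment step via Lemma~\ref{Lemma 3.2} and Corollary~\ref{Corollary 2.4} is exactly right, and your conductor argument for integral closedness is a genuine, more elementary alternative to the paper's: the paper instead considers the family $\Omega$ of $t$-invertible $t$-ideals $I$ with $I\overline{D}\cap D=I$, shows via Proposition~\ref{Proposition 2.9} that $\Omega$ is multiplicatively closed and divisor-closed, and runs a Zorn argument to conclude that $\Omega$ exhausts all $t$-invertible $t$-ideals (a prime $t$-ideal maximal among the $t$-ideals avoiding $\Omega$ would contain a $t$-invertible valuation $t$-ideal $B$, but $B\in\Omega$ because every valuation overring contains $\overline{D}$), whence $b\overline{D}\cap D=bD$ for every nonzero $b\in D$. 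Your localization at a minimal prime $P$ of $(D:D[x])$, using $c^nV\cap D_P=c^nD_P$ from Lemma~\ref{Lemma 2.8}(1) to force $x\in D_P$, reaches the same conclusion more directly and is sound.

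The genuine gap is finite $t$-character, and it is not a presentational gap but the mathematical heart of the statement, as you yourself flag. Your reduction to finiteness of $\mathcal{P}(aD)$ is fine (unique containment makes $M\mapsto P_M$ injective), but the proposed globalization would not work as sketched: splitting a valuation $t$-factor off $a^mD$ presupposes that $a^mD$ admits a $t$-factorization into $t$-invertible valuation $t$-ideals, which is essentially the $t$-VIFD property---precisely what is \emph{not} assumed in this proposition---and the Zorn argument of Theorem~\ref{Theorem 3.8} produces a prime $t$-ideal avoiding a fixed divisor-closed family of ideals, a tool suited to proving factorization statements, not to bounding the number of minimal primes; an infinite family $\mathcal{P}(aD)$ does not obviously yield such an avoiding prime, since each member of $\mathcal{P}(aD)$ individually contains a valuation ideal. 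The paper closes the argument exactly where you stopped, by citation: Proposition~\ref{Proposition 2.3}(1) shows that each nonzero prime $t$-ideal contains a $t$-invertible $t$-ideal $I$ with $I_M\cap D=I$ for some $M\in t$-${\rm Max}(D)$, and then \cite[Theorem 4.3]{amz92} delivers that $D$ is weakly Matlis, finite $t$-character included. So to complete your proof you must either import that theorem (or the equivalent homogeneous-ideal machinery of \cite{az19,xaz22}) or reprove its finite-character content, which is a substantial standalone argument rather than a routine globalization.
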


\begin{proof}
It is an immediate consequence of Lemma~\ref{Lemma 3.2} that every nonzero prime $t$-ideal of $D$ contains a $t$-invertible valuation $t$-ideal. Let $\overline{D}$ be the integral closure of $D$ and let $\Omega$ be the set of $t$-invertible $t$-ideals $I$ of $D$ such that $I\overline{D}\cap D=I$. It follows from Proposition~\ref{Proposition 2.9} that $\Omega$ is a multiplicatively closed and divisor-closed subset of the monoid of $t$-invertible $t$-ideals of $D$. (The notions of multiplicatively closed and divisor-closed can be defined in analogy for monoids. For instance, see \cite{gh06}.) Assume that $\Omega$ is not the set of all $t$-invertible $t$-ideals of $D$. Then there exists a $t$-invertible $t$-ideal $J$ of $D$ such that $J\not\in\Omega$. Since $\Omega$ is divisor-closed, we infer that $L\nsubseteq J$ for each $L\in\Omega$. Let $\Sigma=\{A\mid A$ is a $t$-ideal of $D$ such that $J\subseteq A$ and $L\nsubseteq A$ for each $L\in\Omega\}$. It is clear that $J\in\Sigma$. Observe that $\Sigma$ is ordered inductively under inclusion (since each element of $\Omega$ is $t$-finite). Consequently, $\Sigma$ has a maximal element $Q$ by Zorn's lemma. It is straightforward to show that $Q$ is a nonzero prime $t$-ideal of $D$; e.g., as in the proof of Theorem~\ref{Theorem 3.8}. Hence, $Q$ contains a $t$-invertible valuation $t$-ideal $B$ of $D$. On the other hand, we have that $B\in\Omega$, a contradiction. We infer that $\Omega$ is the set of all $t$-invertible $t$-ideals of $D$. Next we show that $D$ is integrally closed. It remains to prove that $\overline{D}\subseteq D$. Let $x\in\overline{D}$. Then $x=\frac{a}{b}$ for some $a\in D$ and some nonzero $b\in D$. Observe that $bD\in\Omega$. It follows that $a=bx\in b\overline{D}\cap D=(bD)\overline{D}\cap D=bD$. Thus, $x\in D$.

Finally, we show that $D$ is weakly Matlis. It follows from Proposition~\ref{Proposition 2.3}(1) that for each nonzero prime $t$-ideal $P$ of $D$, there exists a $t$-invertible $t$-ideal $I$ of $D$ and an $M\in t$-${\rm Max}(D)$ such that $I\subseteq P$ and $I_M\cap D=I$. We infer by \cite[Theorem 4.3]{amz92} that $D$ is weakly Matlis.
\end{proof}

It is an immediate consequence of Proposition~\ref{Proposition 3.12} that if every nonzero prime $*$-ideal of $D$ contains a $*$-invertible valuation ideal, then $D$ is an integrally closed weakly Matlis domain. Nevertheless, it follows from Example~\ref{Example 2.5} that $D$ need not be $*$-${\rm h}$-local.

\begin{corollary}\label{Corollary 3.13}
Let $D$ be an integral domain and let $*$ be a star operation of finite type on $D$. If $D$ is a $*$-VIFD, then $D$ is an integrally closed weakly Matlis domain.
\end{corollary}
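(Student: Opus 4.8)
The plan is to reduce everything to Proposition~\ref{Proposition 3.12}, which already packages the substantive content (integral closure via the multiplicatively closed, divisor-closed set $\Omega$, and weak Matlis-ness via \cite[Theorem 4.3]{amz92}). So the only thing I need to supply is that the hypothesis of Proposition~\ref{Proposition 3.12} holds, namely that every nonzero prime $t$-ideal of $D$ contains a $t$-invertible valuation ideal.

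First I would pass from $*$ to $t$. Since $*$ is of finite type we have $* \leq t$, and as observed after Definition~\ref{Definition 3.1} a $*$-VIFD is automatically a $t$-VIFD; concretely $(I_*)_t = I_t$ for every $I \in F(D)$, so any $*$-factorization of a nonzero principal ideal into valuation ideals becomes a $t$-factorization upon applying $t$. Hence it suffices to treat the case $* = t$.

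Next I would verify the hypothesis of Proposition~\ref{Proposition 3.12} for a $t$-VIFD. The quickest route is to invoke Theorem~\ref{Theorem 3.8} with the $t$-operation in place of $*$, which yields directly that each nonzero prime $t$-ideal of $D$ contains a $t$-invertible valuation $t$-ideal. If one prefers to avoid the full strength of Theorem~\ref{Theorem 3.8}, the same conclusion follows from a short pigeonhole argument: a nonzero prime $t$-ideal $P$ contains a nonzero nonunit $a$, and writing $aD = (\prod_{i=1}^n I_i)_t$ with each $I_i$ a valuation ideal, each $I_i$ is in fact $t$-invertible (being a $t$-factor of the $t$-invertible ideal $aD$); since $\prod_{i=1}^n I_i \subseteq aD \subseteq P$ and $P$ is prime, some $I_j \subseteq P$.

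Finally, applying Proposition~\ref{Proposition 3.12} gives that $D$ is an integrally closed weakly Matlis domain. I do not expect a genuine obstacle here: the real work is already done in Proposition~\ref{Proposition 3.12}, and what remains is the reduction to the $t$-operation together with the observation that the factors of a $t$-factorization of a principal ideal are themselves $t$-invertible. I would also note, following the discussion immediately after Proposition~\ref{Proposition 3.12}, that the conclusion cannot in general be upgraded to $*$-${\rm h}$-locality, as Example~\ref{Example 2.5} shows.
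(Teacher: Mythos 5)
Your proposal is correct and matches the paper's route: the paper derives Corollary~\ref{Corollary 3.13} as an immediate consequence of Proposition~\ref{Proposition 3.12}, using the reduction from $*$ to $t$ stated just before that proposition, and your pigeonhole verification that each nonzero prime $t$-ideal contains a $t$-invertible valuation ideal is exactly the argument the paper itself uses in the forward direction of Theorem~\ref{Theorem 3.8} (the paper merely leaves it implicit in the corollary's one-line proof). Your added remarks on $t$-invertibility of the factors and on Example~\ref{Example 2.5} are accurate but not needed beyond what the paper records.
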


\begin{proof}
This is an immediate consequence of Proposition~\ref{Proposition 3.12}.
\end{proof}

We say that $D$ is a {\em $*$-treed domain} if the set of prime $*$-ideals of $D$ is treed under inclusion. Hence, $D$ is $*$-treed if and only if ${\rm Spec}(D_M)$ is linearly ordered under inclusion for all maximal $*$-ideals $M$ of $D$. The class of $*$-treed domains includes P$*$MDs, integral domains of $*$-dimension one, and treed domains. Moreover, $D$ is said to be a {\em ring of Krull type} if $D$ is a P$v$MD of finite $t$-character and $D$ is called an {\em independent ring of Krull type} if $D$ is a weakly Matlis P$v$MD. Next we recall the connections between P$*$MDs and P$v$MDs, as well as the connections between $*$-treed domains and $t$-treed domains. Observe that $\mathbb{R}[X,Y]$ is a P$v$MD (and thus $t$-treed), but it is not treed (and hence it is not a Pr\"ufer domain).

\begin{remark}\label{Remark 3.14}
{\em Let $D$ be an integral domain and let $*$ be a star operation of finite type on $D$\textnormal{:}
\begin{enumerate}
\item $D$ is a P$*$MD if and only if $D$ is a P$v$MD and $*=t$.
\item $D$ is $*$-treed if and only if $D$ is $t$-treed and $\widetilde{*}=w$.
\end{enumerate}}
\end{remark}

\begin{proof}
(1) This follows from \cite[Proposition 3.15]{fjs03}.

(2) ($\Rightarrow$) Let $D$ be $*$-treed. It is sufficient to show that $*$-${\rm Spec}(D)=t$-${\rm Spec}(D)$. (Then obviously $D$ is $t$-treed and $*$-${\rm Max}(D)=t$-${\rm Max}(D)$, and thus $\widetilde{*}=w$ by \cite[Lemma 2.1(2)]{or20}.) Clearly, $t$-${\rm Spec}(D)\subseteq *$-${\rm Spec}(D)$. Now let $P\in *$-${\rm Spec}(D)$. There is some $Q\in t$-${\rm Spec}(D)$ such that $Q\subseteq P$ and such that for each $Q^{\prime}\in t$-${\rm Spec}(D)$ with $Q\subseteq Q^{\prime}\subseteq P$, it follows that $Q^{\prime}=Q$. Assume that $Q\subsetneq P$. There are some $x\in P\setminus Q$ and $Q^{\prime}\in\mathcal{P}(xD)$ such that $Q^{\prime}\subseteq P$. Observe that $Q^{\prime}\in t$-${\rm Spec}(D)\subseteq *$-${\rm Spec}(D)$. Since $D$ is $*$-treed and $Q^{\prime}\nsubseteq Q$, we have that $Q\subseteq Q^{\prime}$. Consequently, $Q^{\prime}=Q$, a contradiction. We infer that $P=Q\in t$-${\rm Spec}(D)$.

($\Leftarrow$) Let $D$ be $t$-treed and let $\widetilde{*}=w$. It follows from \cite[Lemma 2.1(2)]{or20} that $*$-${\rm Max}(D)=\widetilde{*}$-${\rm Max}(D)=w$-${\rm Max}(D)=t$-${\rm Max}(D)$. Since $D$ is $t$-treed, we obtain that ${\rm Spec}(D_M)$ is linearly ordered for each $M\in t$-${\rm Max}(D)$, and hence ${\rm Spec}(D_M)$ is linearly ordered for each $M\in *$-${\rm Max}(D)$. This implies that $D$ is $*$-treed.
\end{proof}

\begin{theorem}\label{Theorem 3.15}
Let $D$ be an integral domain and let $*$ be a star operation of finite type on $D$. The following statements are equivalent\textnormal{:}
\begin{enumerate}
\item $D$ is a $*$-${\rm h}$-local P$*$MD.
\item Every nonzero $(*$-finite$)$ $*$-ideal of $D$ can be written as a finite $*$-product of $*$-comaximal valuation ideals.
\item Every nonzero principal ideal of $D$ can be written as a finite $*$-product of $*$-comaximal valuation ideals.
\item $D$ is $*$-treed and $D$ is a $*$-VIFD.
\item $D$ is $*$-treed and every nonzero prime ideal of $D$ contains a $*$-invertible valuation ideal.
\item $D$ is $*$-treed and every nonzero $*$-ideal of $D$ is a finite $*$-product of valuation ideals.
\end{enumerate}
\end{theorem}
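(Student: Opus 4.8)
The plan is to prove the cyclic chain $(1)\Rightarrow(2)\Rightarrow(3)\Rightarrow(4)\Rightarrow(5)\Rightarrow(1)$ and to attach $(6)$ by the two extra arrows $(1)\Rightarrow(6)$ and $(6)\Rightarrow(4)$; since $(4)\Rightarrow(5)\Rightarrow(1)$, this renders all six statements equivalent. Several links are immediate: a nonzero principal ideal is a nonzero $*$-finite $*$-ideal, so $(2)\Rightarrow(3)$ is trivial, and $(6)\Rightarrow(4)$ is trivial because the factorization of principal ideals granted by $(6)$ is exactly the $*$-VIFD property, the treedness being carried over verbatim.

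For $(4)\Rightarrow(5)$ I would keep the hypothesis that $D$ is $*$-treed and establish the valuation-ideal condition: given a nonzero prime $P$, pick a nonzero $a\in P$, write $aD=(\prod_{i=1}^n V_i)_*$ with each $V_i$ a valuation ideal, note each $V_i$ is $*$-invertible (a factor of the $*$-invertible ideal $aD$), and observe that $\prod_{i=1}^n V_i\subseteq aD\subseteq P$ forces some $V_j\subseteq P$; this $V_j$ is the required $*$-invertible valuation ideal. For $(1)\Rightarrow(2)$ and $(1)\Rightarrow(6)$ I would use the weakly Matlis (that is, $t$-$\mathrm h$-local) machinery. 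A nonzero $*$-ideal $I$ contains a nonzero element $a$, and by finite $*$-character only finitely many maximal $*$-ideals $M_1,\dots,M_k$ can contain $I$; since $D$ is a P$*$MD each $D_{M_i}$ is a valuation domain, so the components $I(M_i):=I_{M_i}\cap D$ are valuation ideals of $D$. The $\mathrm h$-local structure then gives $I=\bigcap_{i=1}^k I(M_i)$ with the $I(M_i)$ pairwise $*$-comaximal, whence $I=(\prod_{i=1}^k I(M_i))_*$; here I would invoke the weakly Matlis factorization of \cite{az19,xaz22,amz92} for the identity ``$*$-comaximal product $=$ intersection'' rather than reprove it. Restricting to $*$-finite (in particular principal) $I$ yields $(2)$, and the unrestricted version yields $(6)$.

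The implication $(3)\Rightarrow(4)$ is where I would cash in Gilmer--Ohm. Fix a maximal $*$-ideal $M$ and a nonzero principal $aD=(\prod_{i=1}^n V_i)_*$ with the $V_i$ pairwise $*$-comaximal valuation ideals. Comaximality forces at most one $V_j\subseteq M$, so localizing gives $aD_M=((V_j)_M)_{*_M}$ (or $aD_M=D_M$). By Lemma~\ref{Lemma 3.9} the ideal $(V_j)_M$ is a valuation ideal of $D_M$, and it is $*_M$-invertible, so Lemma~\ref{Lemma 3.2} applied to $D_M$ and $*_M$ shows that $aD_M$ is a valuation ideal of $D_M$. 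Thus every nonzero principal ideal of $D_M$ is a valuation ideal, and \cite[Corollary 2.4]{go65} makes $D_M$ a valuation domain; hence $D$ is a P$*$MD and, in particular, $*$-treed, while $(3)$ visibly gives that $D$ is a $*$-VIFD.

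The main obstacle is $(5)\Rightarrow(1)$, which I would split into the $*$-$\mathrm h$-local part and the P$*$MD part. For the former, every nonzero prime $t$-ideal contains a $*$-invertible (hence $t$-invertible) valuation ideal, so Proposition~\ref{Proposition 3.12} makes $D$ integrally closed and weakly Matlis, that is, $t$-$\mathrm h$-local; because $D$ is $*$-treed, Remark~\ref{Remark 3.14}(2) identifies $*$-$\mathrm{Spec}(D)$ with $t$-$\mathrm{Spec}(D)$, so $t$-$\mathrm h$-local upgrades to $*$-$\mathrm h$-local. For the P$*$MD part I would prove the key lemma that a quasi-local treed domain $R$ in which every nonzero prime contains a valuation element is a valuation domain: for a nonzero nonunit $b$, treedness makes $\sqrt{bR}$ the unique minimal, hence smallest, prime over $bR$; it contains a valuation element $a$ with $aV\cap R=aR$ and $\sqrt{aR}\subseteq\sqrt{bR}$, and then Proposition~\ref{Proposition 2.9}(3), applied with the $d$-operation, yields $bV\cap R=bR$, so $b$ is itself a valuation element, and Gilmer--Ohm \cite[Corollary 2.4]{go65} makes $R$ a valuation domain. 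To deploy this with $R=D_M$ for $M\in *$-$\mathrm{Max}(D)$, I would localize $(5)$ via Lemma~\ref{Lemma 3.9}, noting that a $*_M$-invertible valuation ideal in the quasi-local $D_M$ is principal, so every nonzero prime of the treed domain $D_M$ contains a valuation element; the lemma then gives $D_M$ valuation, and $D$ a P$*$MD. The delicate points I expect to watch are the comaximal-product-equals-intersection step in the $\mathrm h$-local glueing and the reduction of $*_M$-invertibility to principality in $D_M$; the valuation-element lemma itself is short once Proposition~\ref{Proposition 2.9}(3) and Proposition~\ref{Proposition 2.3}(2) are in hand.
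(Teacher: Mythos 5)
Your proposal is correct and takes essentially the same route as the paper: the identical implication scheme ($(1)\Rightarrow(2)\Rightarrow(3)\Rightarrow(4)\Rightarrow(5)\Rightarrow(1)$ with $(6)$ attached via $(1)\Rightarrow(6)$ and $(6)\Rightarrow(4)$), the same ${\rm h}$-local decomposition $I=\bigcap_{i=1}^k(I_{M_i}\cap D)$ into $*$-comaximal valuation components for $(1)\Rightarrow(2),(6)$, the same localization-plus-Gilmer--Ohm argument for $(3)\Rightarrow(4)$, and Proposition~\ref{Proposition 3.12} together with the quasi-local radical-comparison step for $(5)\Rightarrow(1)$. The only deviations are cosmetic: you obtain the valuation-element transfer from Proposition~\ref{Proposition 2.9}(3) with $*=d$ where the paper cites \cite[Proposition 1.1(3)]{cr20}, and you make the upgrade from weakly Matlis to $*$-${\rm h}$-local explicit via Remark~\ref{Remark 3.14}(2) where the paper absorbs it into the P$*$MD conclusion.
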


\begin{proof}
(1) $\Rightarrow$ (2), (6) Since $D$ is a P$*$MD, it is clear that $D$ is $*$-treed. It remains to show that every nonzero proper $*$-ideal of $D$ is a finite $*$-product of $*$-comaximal valuation ideals. Let $I$ be a nonzero proper $*$-ideal of $D$. Then $\bigcap_{M\in *\textnormal{-}{\rm Max}(D)}I_M=I$, and since $D$ is of finite $*$-character, there are only finitely many maximal $*$-ideals, say, $(M_i)_{i=1}^k$, such that $I=(\bigcap_{i=1}^k I_{M_i})\cap D$. Let $I_i=I_{M_i}\cap D$ for $i\in [1,k]$. Then, since $D_{M_i}$ is a valuation domain, each $I_i$ is a valuation $*$-ideal. Note that if $i,j\in [1,k]$ are distinct, then $(D_{M_i})_{M_j}$ is the quotient field of $D$. Hence, $(I_i+I_j)_*=D$ for all distinct $i,j\in [1,k]$, and thus $\bigcap_{i=1}^k I_i=(\prod_{i=1}^k I_i)_*$. Therefore, $I=(\prod_{i=1}^k I_i)_*$ is a finite $*$-product of the $*$-comaximal valuation ideals $I_i$.

(2) $\Rightarrow$ (3) This is clear.

(3) $\Rightarrow$ (4) Let $M$ be a maximal $*$-ideal of $D$ and let $a\in M$ be nonzero. Then, by assumption, $aD=(\prod_{i=1}^n Q_i)_*$ for some $n\in\mathbb{N}$ and proper $*$-comaximal valuation ideals $Q_i$ of $D$. Note that $Q_i$ and $Q_j$ are $*$-comaximal for each distinct $i,j\in [1,n]$, so $M$ contains exactly one of the $Q_i$'s, say $Q_1$ for convenience. Consequently,

\[
aD_M=\Big(\Big(\prod_{i=1}^n Q_i\Big)_*\Big)_M=\Big(\prod_{i=1}^n (Q_i)_M\Big)_{*_M}=(Q_1)_M
\]
\noindent
and $(Q_1)_M$ is a valuation ideal of $D_M$ by Lemma~\ref{Lemma 3.9}, which means that $a$ is a valuation element of $D_M$. Thus, every nonzero nonunit of $D_M$ is a valuation element, and hence $D_M$ is a valuation domain \cite[Corollary 2.4]{go65}. Therefore, $D$ is $*$-treed.

(4) $\Rightarrow$ (5) Let $P$ be a nonzero prime ideal of $D$, and choose a nonzero $a\in D$. Then $aD=(\prod_{i=1}^n Q_i)_*$ for some $n\in\mathbb{N}$ and valuation ideals $Q_i$ of $D$. Clearly, for each $i\in [1,n]$, $Q_i$ is $*$-invertible and $P$ contains at least one of the $Q_i$'s.

(5) $\Rightarrow$ (1) It follows from Proposition~\ref{Proposition 3.12} that $D$ is a weakly Matlis domain. It suffices to show that $D$ is a P$*$MD; equivalently, $D_M$ is a valuation domain for each $M\in *$-${\rm Max}(D)$. Now let $M$ be a maximal $*$-ideal of $D$. Then ${\rm Spec}(D_M)$ is linearly ordered under inclusion because $*$-${\rm Spec}(D)$ is treed, and each prime ideal of $D_M$ contains a valuation element by assumption and Lemma~\ref{Lemma 3.9}. Now if $b\in D_M$ is a nonzero nonunit, then $\sqrt{bD_M}$ is a prime ideal, so there is a valuation element $c\in\sqrt{bD_M}$. Hence, $\sqrt{cD_M}\subseteq\sqrt{bD_M}$, and thus $b$ is a valuation element of $D_M$ \cite[Proposition 1.1(3)]{cr20}. Thus, $D_M$ is a valuation domain \cite[Corollary 1.4]{cr20}.

(6) $\Rightarrow$ (4) This is obvious.
\end{proof}

\begin{corollary}\label{Corollary 3.16}
Let $D$ be a $t$-treed domain. Then $D$ is a VFD if and only if ${\rm Cl}_t(D)=\{0\}$ and every nonzero prime ideal of $D$ contains a valuation element.
\end{corollary}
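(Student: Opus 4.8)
The plan is to derive this from Corollary~\ref{Corollary 3.6} and Theorem~\ref{Theorem 3.15}, both specialized to $*=t$, together with the elementary fact that a nonzero nonunit $a\in D$ is a valuation element if and only if $aD$ is a valuation ideal, and that such a principal ideal is automatically invertible, hence $t$-invertible. In this way the condition ``every nonzero prime ideal contains a valuation element'' will be matched with condition (5) of Theorem~\ref{Theorem 3.15} (with $*=t$), namely that every nonzero prime ideal contains a $t$-invertible valuation ideal. Note that the standing hypothesis that $D$ is $t$-treed is exactly what is needed to invoke Theorem~\ref{Theorem 3.15}.

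First I would treat the forward implication. Suppose $D$ is a VFD. By Corollary~\ref{Corollary 3.5} (applied with $*=t$) we immediately obtain ${\rm Cl}_t(D)=\{0\}$. For the second condition, let $P$ be a nonzero prime ideal of $D$ and choose a nonzero $a\in P$; since $P$ is proper, $a$ is a nonzero nonunit, so by definition of a VFD we may write $a=v_1\cdots v_n$ with each $v_i$ a valuation element of $D$. As $v_1\cdots v_n\in P$ and $P$ is prime, some $v_i$ lies in $P$, exhibiting a valuation element in $P$.

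For the backward implication, assume ${\rm Cl}_t(D)=\{0\}$ and that every nonzero prime ideal of $D$ contains a valuation element. If $a\in P$ is such a valuation element, then $aD$ is a valuation ideal which, being principal, is invertible and therefore $t$-invertible; hence every nonzero prime ideal of $D$ contains a $t$-invertible valuation ideal. Since $D$ is $t$-treed by hypothesis, the equivalence of (4) and (5) in Theorem~\ref{Theorem 3.15} (with $*=t$) shows that $D$ is a $t$-VIFD. Applying Corollary~\ref{Corollary 3.6} with $*_1=*_2=t$ (a $t$-VIFD with trivial $t$-class group is a VFD) then yields the conclusion.

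I do not expect a genuine obstacle here, since the substantive work is carried out in Theorem~\ref{Theorem 3.15} and Corollary~\ref{Corollary 3.6}. The only points that need care are the bookkeeping that links a valuation element to the $t$-invertible valuation ideal it generates (so that condition (5) of Theorem~\ref{Theorem 3.15} is met) and the use of primality to pass from a factorization of an arbitrary nonzero nonunit in $P$ to a single valuation element lying in $P$.
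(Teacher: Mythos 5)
Your proposal is correct, and both directions go through. The forward direction matches the paper's (the paper dismisses the prime-ideal condition as clear and cites \cite[Corollary 2.3(1)]{cr20} for ${\rm Cl}_t(D)=\{0\}$, which is exactly what Corollary~\ref{Corollary 3.5} packages). In the backward direction you and the paper share the same pivot---a valuation element $a$ gives the principal, hence $t$-invertible, valuation ideal $aD$, so condition (5) of Theorem~\ref{Theorem 3.15} with $*=t$ is satisfied---but you then diverge: you use the implication $(5)\Rightarrow(4)$ to conclude that $D$ is a $t$-VIFD and finish with Corollary~\ref{Corollary 3.6} (equivalently Corollary~\ref{Corollary 3.5}, $(5)\Rightarrow(1)$), whereas the paper uses $(5)\Rightarrow(1)$ of Theorem~\ref{Theorem 3.15} to conclude that $D$ is an independent ring of Krull type and then invokes the external result \cite[Theorem 3.4]{cr20}, namely that an independent ring of Krull type with trivial $t$-class group is a weakly Matlis GCD-domain and hence a VFD. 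Your route has the advantage of staying entirely inside the present paper's machinery (the VIFD-to-VFD passage of Corollaries~\ref{Corollary 3.5} and~\ref{Corollary 3.6}), at the cost of not exposing the structural byproduct the paper's argument records in passing, that such a $D$ is in fact a weakly Matlis GCD-domain.
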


\begin{proof}
($\Rightarrow$) It is clear that every nonzero prime ideal of $D$ contains a valuation element. Thus, the result follows because a VFD has a trivial $t$-class group \cite[Corollary 2.3(1)]{cr20}.

($\Leftarrow$) If $a\in D$ is a valuation element, then $aD$ is a $t$-invertible valuation ideal. Hence, $D$ is an independent ring of Krull type by Theorem~\ref{Theorem 3.15}. Therefore, ${\rm Cl}_t(D)=\{0\}$ implies that $D$ is a weakly Matlis GCD-domain, so $D$ is a VFD \cite[Theorem 3.4]{cr20}.
\end{proof}

\begin{corollary}\label{Corollary 3.17}
Let $D$ be an integral domain and let $*$ be a star operation of finite type on $D$ such that $*$-$\dim(D)=1$. The following statements are equivalent\textnormal{:}
\begin{enumerate}
\item $D$ is a P$*$MD of finite $*$-character.
\item $D$ is a $*$-VIFD.
\item Every nonzero prime ideal of $D$ contains a $*$-invertible valuation ideal.
\item Each nonzero $*$-ideal of $D$ is a finite $*$-product of valuation ideals.
\end{enumerate}
\end{corollary}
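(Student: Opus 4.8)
The plan is to deduce the whole corollary from Theorem~\ref{Theorem 3.15}, so the heart of the matter is to check that the hypothesis $*$-$\dim(D)=1$ forces $D$ to be $*$-treed. Once this is established, the four conditions of the corollary are precisely conditions (1), (4), (5) and (6) of Theorem~\ref{Theorem 3.15} after a small translation of the $*$-${\rm h}$-local hypothesis, and the equivalences follow immediately.

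First I would prove that $D$ is $*$-treed, i.e.\ that ${\rm Spec}(D_M)$ is linearly ordered under inclusion for every maximal $*$-ideal $M$ of $D$. In fact I expect to prove the sharper statement that the only primes of $D$ contained in a maximal $*$-ideal $M$ are $(0)$ and $M$ itself. To this end, let $Q$ be a nonzero prime ideal with $Q\subseteq M$ and pick a nonzero $x\in Q$. Since $xD$ is a principal (hence $*$-finite) $*$-ideal contained in $Q$, there is a prime ideal $P$ minimal over $xD$ with $P\subseteq Q$. Because each prime ideal minimal over a $*_f$-ideal is a $*_f$-ideal and $*=*_f$, the ideal $P$ is a prime $*$-ideal, and since $*$-$\dim(D)=1$ it is a maximal $*$-ideal. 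As $P\subseteq M$ with $P$ and $M$ both proper $*$-ideals and $P$ maximal among these, we get $P=M$, whence $M=P\subseteq Q\subseteq M$ and thus $Q=M$. Therefore ${\rm Spec}(D_M)=\{(0),MD_M\}$ is linearly ordered and $D$ is $*$-treed. This treedness argument is the main (and essentially the only) obstacle, the delicate point being to exhibit, below $Q$, a minimal prime of $xD$ that is guaranteed to be a $*$-ideal.

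Next I would record the translation of condition~(1). Under $*$-$\dim(D)=1$ every nonzero prime $*$-ideal of $D$ is a maximal $*$-ideal, and hence is contained in a unique maximal $*$-ideal, namely itself. Consequently $D$ is $*$-${\rm h}$-local if and only if $D$ is of finite $*$-character, so ``$D$ is a $*$-${\rm h}$-local P$*$MD'' (statement~(1) of Theorem~\ref{Theorem 3.15}) is the same as ``$D$ is a P$*$MD of finite $*$-character'' (statement~(1) of the present corollary).

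Finally I would assemble the equivalences. Having shown that $D$ is automatically $*$-treed, the conjunct ``$D$ is $*$-treed'' appearing in statements (4), (5) and (6) of Theorem~\ref{Theorem 3.15} is satisfied for free, so those three statements reduce exactly to statements (2), (3) and (4) of the corollary, respectively, while statement (1) of Theorem~\ref{Theorem 3.15} reduces to statement (1) of the corollary by the previous paragraph. Since statements (1), (4), (5) and (6) of Theorem~\ref{Theorem 3.15} are equivalent, the four statements of the corollary are equivalent as well.
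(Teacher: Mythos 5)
Your proposal is correct and follows essentially the same route as the paper: the paper's proof likewise notes that $*$-$\dim(D)=1$ implies $D$ is $*$-treed and that $*$-${\rm h}$-locality reduces to finite $*$-character (every nonzero prime $*$-ideal being a maximal $*$-ideal), and then invokes Theorem~\ref{Theorem 3.15}. The only difference is that you supply a full argument for the $*$-treedness (indeed the sharper fact that ${\rm Spec}(D_M)=\{(0),MD_M\}$ for each $M\in *$-${\rm Max}(D)$, using that primes minimal over the $*$-ideal $xD$ are $*$-ideals), whereas the paper simply asserts this implication, having earlier listed domains of $*$-dimension one among the $*$-treed domains; your argument for it is correct.
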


\begin{proof}
Since $*$-$\dim(D)=1$, we have that $D$ is $*$-treed. Moreover, $D$ is $*$-${\rm h}$-local if and only if $D$ is of finite $*$-character (since every nonzero prime $*$-ideal of $D$ is a maximal $*$-ideal). Now the equivalence is an immediate consequence of Theorem~\ref{Theorem 3.15}.
\end{proof}

Next we characterize when a $*$-VIFD with $*$-$\dim(D)=1$ is atomic, which is a variant of \cite[Theorem 4.3]{ch18}, because $D$ is a $*$-VIFD if $D$ is a P$*$MD of finite $*$-character and $*$-$\dim(D)=1$ by Corollary~\ref{Corollary 3.17}. We first need a definition. Assume that $D$ is a P$*$MD of finite $*$-character and $*$-$\dim(D)=1$. Let $M\in *$-${\rm Max}(D)$. Then $D_M$ is a one-dimensional valuation domain, and hence there exists a valuation ${\rm v}_M:K\setminus\{0\}\rightarrow\mathbb{R}$ (where $\mathbb{R}$ is the additive group of real numbers) such that $D_M\setminus\{0\}=\{x\in K\setminus\{0\}\mid {\rm v}_M(x)\geq 0\}$. For each $x\in K\setminus\{0\}$, let $||x||=\sum_{M\in *\textnormal{-}{\rm Max}(D)} {\rm v}_M(x)$ (this is well-defined, since $D$ is of finite $*$-character). Observe that for each nonzero $a,b\in D$, $a\mid_D b$ if and only if ${\rm v}_N(a)\leq {\rm v}_N(b)$ for each $N\in *$-${\rm Max}(D)$. Moreover, $||xy||=||x||+||y||$ for all $x,y\in K\setminus\{0\}$. Let $\mathbb{R}_{>0}$ denote the set of positive real numbers.

We say that $D$ {\em satisfies the ACCP} if $D$ satisfies the ACC on principal ideals of $D$. Moreover, $D$ is called a {\em BF-domain} if $D$ is atomic and for each nonzero nonunit $b\in D$, there is some $n\in\mathbb{N}$ such that $b$ is not a product of more than $n$ atoms of $D$. The next result can be proved along the same lines as \cite[Theorem 4.3]{ch18}, but for the sake of completeness, we include a proof.

\begin{proposition}\label{Proposition 3.18}
Let $D$ be an integral domain and let $*$ be a star operation of finite type on $D$ such that $D$ is a P$*$MD, $D$ is of finite $*$-character and $*$-$\dim(D)=1$. The following statements are equivalent\textnormal{:}
\begin{enumerate}
\item For each nonzero nonunit $a\in D$, there exists some $r\in\mathbb{R}_{>0}$ such that $||b||\geq r$ for each nonzero nonunit $b\in D$ such that $b\mid_D a$.
\item $D$ is a BF-domain.
\item $D$ satisfies the ACCP.
\item $D$ is atomic.
\end{enumerate}
\end{proposition}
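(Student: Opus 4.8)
The plan is to prove the cycle (1) $\Rightarrow$ (2) $\Rightarrow$ (3) $\Rightarrow$ (4) $\Rightarrow$ (1). The implications (2) $\Rightarrow$ (3) and (3) $\Rightarrow$ (4) are the classical facts that a BF-domain satisfies the ACCP and that a domain satisfying the ACCP is atomic; these hold for an arbitrary integral domain, so I would simply invoke them. Thus the real content lies in (1) $\Rightarrow$ (2) and in (4) $\Rightarrow$ (1), and both are driven by the additive length map $||\cdot||$ together with the divisibility criterion $a\mid_D b\Leftrightarrow {\rm v}_N(a)\leq {\rm v}_N(b)$ for all $N\in *\textnormal{-}{\rm Max}(D)$ recorded just before the statement (note also that ${\rm v}_N$ vanishes on $D^{\times}$, so associate elements have equal $||\cdot||$).

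For (1) $\Rightarrow$ (2), I would fix a nonzero nonunit $a$ and take $r\in\mathbb{R}_{>0}$ as in (1). If $a=c_1\cdots c_k$ with each $c_j$ a nonunit, then each $c_j\mid_D a$ is a nonunit, so $||c_j||\geq r$, while $||a||=\sum_{j=1}^k ||c_j||$ by additivity of $||\cdot||$ (a finite sum, by finite $*$-character). Hence $k\leq ||a||/r$, so \emph{every} factorization of $a$ into nonunits has length at most $\lfloor ||a||/r\rfloor$. Taking a factorization of maximal length then forces each factor to be an atom (otherwise one could refine it to a strictly longer one), so $a$ is a product of atoms; and the number of atoms in any such factorization is bounded by $||a||/r$. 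Thus $D$ is a BF-domain.

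The main obstacle is (4) $\Rightarrow$ (1), which I would prove by contraposition. Assume $D$ is atomic but (1) fails at some nonzero nonunit $a$. For each $r>0$ there is a nonunit $b\mid_D a$ with $||b||<r$; factoring $b$ into atoms yields an atom $u\mid_D a$ with $||u||\leq ||b||<r$. Hence there is a sequence of atoms $u_n\mid_D a$ with $||u_n||\to 0$. Each $u_n$ has finite support contained in the finite support of $a$, so by the pigeonhole principle I may pass to a subsequence along which all $u_n$ share one fixed nonempty support $\Sigma\subseteq *\textnormal{-}{\rm Max}(D)$; for every $M\in\Sigma$ one then has $0<{\rm v}_M(u_n)\leq ||u_n||\to 0$. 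Choosing $n$ with ${\rm v}_M(u_n)<{\rm v}_M(u_1)$ for each of the finitely many $M\in\Sigma$ gives ${\rm v}_M(u_n)\leq {\rm v}_M(u_1)$ for all $M\in *\textnormal{-}{\rm Max}(D)$, i.e. $u_n\mid_D u_1$ by the divisibility criterion. Since $u_1$ is an atom and $u_n$ is a nonunit, $u_n$ and $u_1$ are associates, whence $||u_n||=||u_1||$; but $||u_n||<||u_1||$, a contradiction. Therefore (1) holds.

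I expect the key insight, and the step most likely to be mishandled, to be exactly this last reduction. One is tempted to manufacture an element of $D$ supported at a single valuation in order to exploit density of its value group, which runs straight into the $*$-class-group obstruction. The componentwise divisibility criterion stated before the proposition removes this difficulty entirely: it lets one compare the atoms $u_n$ and $u_1$ directly and conclude $u_n\mid_D u_1$ without constructing any new elements, and the pigeonhole step (fixing a common support and arranging the strict inequalities simultaneously) is precisely what makes that comparison legitimate.
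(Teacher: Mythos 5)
Your proposal is correct and follows essentially the same route as the paper: (1) $\Rightarrow$ (2) via additivity of $||\cdot||$ bounding factorization lengths, (2) $\Rightarrow$ (3) $\Rightarrow$ (4) by the classical implications, and (4) $\Rightarrow$ (1) by comparing a small atom with a same-support atom through the componentwise divisibility criterion, forcing associatedness and a norm contradiction. The only cosmetic difference is in (4) $\Rightarrow$ (1): the paper argues directly, taking $r=\min\{{\rm v}_P(u)\mid u\in\mathcal{A},\,P\in\mathcal{P}(uD)\}$ over a finite set $\mathcal{A}$ of atoms representing each possible support inside $\mathcal{P}(aD)$, whereas you argue the contrapositive with a sequence of atoms $u_n$ with $||u_n||\to 0$ and a pigeonhole on supports -- the core mechanism is identical.
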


\begin{proof}
(1) $\Rightarrow$ (2) Let $a\in D$ be a nonzero nonunit and let $t=\inf\{||b||\mid b\in D$ is a nonzero nonunit such that $b\mid_D a\}$. Note that $t>0$. Set $k=\lfloor\frac{||a||}{t}\rfloor$. It is sufficient to show that if $\ell\in\mathbb{N}$ is such that $a$ is a product of $\ell$ nonunits of $D$, then $\ell\leq k$. Let $\ell\in\mathbb{N}$ be such that $a$ is a product of $\ell$ nonunits of $D$. Then $a=\prod_{i=1}^{\ell} a_i$ for some nonunits $a_i\in D$. Observe that $t\ell\leq\sum_{i=1}^{\ell} ||a_i||=||a||$, and thus $\ell\leq k$.

(2) $\Rightarrow$ (3) $\Rightarrow$ (4) This follows from \cite[Propositions 1.1.4 and 1.3.2]{gh06}.

(4) $\Rightarrow$ (1) Let $a\in D$ be a nonzero nonunit. Note that $\mathcal{P}(aD)$ is the set of maximal $*$-ideals of $D$ that contain $aD$ (since $*$-$\dim(D)=1$). Set $\Omega=\{\mathcal{P}(uD)\mid u\in D$ is an atom such that $\mathcal{P}(uD)\subseteq\mathcal{P}(aD)\}$. Then $\Omega$ is finite (since $D$ is of finite $*$-character) and $\Omega\not=\emptyset$ (since $D$ is atomic). Consequently, there exists a finite nonempty set $\mathcal{A}$ of atoms of $D$ such that $\Omega=\{\mathcal{P}(uD)\mid u\in\mathcal{A}\}$. Note that $\{{\rm v}_P(u)\mid u\in\mathcal{A},P\in\mathcal{P}(uD)\}$ is finite and nonempty, since $\mathcal{A}$ is finite and nonempty and $D$ is of finite $*$-character. Set $r=\min\{{\rm v}_P(u)\mid u\in\mathcal{A},P\in\mathcal{P}(uD)\}$. Then $r\in\mathbb{R}_{>0}$.

Assume that there exists a nonzero nonunit $b\in D$ with $||b||<r$ and $b\mid_D a$. Since $D$ is atomic, there is an atom $v\in D$ with $v\mid_D b$. We have that $\mathcal{P}(vD)\in\Omega$ (since $v\mid_D a$), and hence $\mathcal{P}(vD)=\mathcal{P}(uD)$ for some $u\in\mathcal{A}$. Let $M\in *$-${\rm Max}(D)$. If $v\not\in M$, then $u\not\in M$, and thus ${\rm v}_M(v)=0={\rm v}_M(u)$. If $v\in M$, then $u\in M$, and hence ${\rm v}_M(v)\leq ||v||\leq ||b||\leq r\leq {\rm v}_M(u)$. In any case, we have that ${\rm v}_N(v)\leq {\rm v}_N(u)$ for each $N\in *$-${\rm Max}(D)$. Consequently, $v\mid_D u$. This implies that $u=v\varepsilon$ for some $\varepsilon\in D^{\times}$ (since $u$ and $v$ are atoms of $D$). There exists some $P\in\mathcal{P}(vD)=\mathcal{P}(uD)$. Note that ${\rm v}_P(u)={\rm v}_P(v)\leq ||v||\leq ||b||<r\leq {\rm v}_P(u)$, a contradiction.
\end{proof}

\smallskip
\section{VIFDs and $t$-VIFDs}

We begin this section with easy examples of VIFDs, which are Dedekind domains. Dedekind domains are integral domains whose nonzero ideals can be written as a finite product of prime ideals. Then Dedekind domains are $\pi$-domains, and $D$ is a Dedekind domain if and only if $D$ is a $\pi$-domain of Krull dimension at most one.

\begin{example}\label{Example 4.1}
{\em Let $D$ be a Dedekind domain. Then $D$ is a VIFD because each prime ideal is a valuation ideal. Moreover, note that a Dedekind domain is a PID if and only if its ideal class group is trivial. Note also that if $D$ is a VFD, then ${\rm Cl}_t(D)=\{0\}$ \cite[Corollary 2.3(1)]{cr20}. Hence, $D$ is a VFD if and only if $D$ is a PID.}
\end{example}

As in \cite{o00}, we say that $D$ is a {\em ZPUI domain} if every nonzero proper ideal $I$ of $D$ can be written as $I=J\prod_{i=1}^n P_i$, where $J$ is an invertible ideal of $D$, $n\in\mathbb{N}$ and the $(P_i)_{i=1}^n$ are prime ideals of $D$. It is known that $D$ is a ZPUI domain if and only if $D$ is a strongly discrete ${\rm h}$-local Pr\"ufer domain \cite[Theorem 2.3]{o00}. As a $w$-operation analog, we say that $D$ is a {\em $w$-ZPUI domain} if every nonzero proper $w$-ideal of $D$ can be written as $I=(J\prod_{i=1}^n P_i)_{w}$ for some $w$-invertible ideal $J$ of $D$, $n\in\mathbb{N}$ and $(P_i)_{i=1}^n$ pairwise $w$-comaximal prime $w$-ideals of $D$ \cite[Definition 3.1]{cc23}. It is known that $D$ is a $w$-ZPUI domain if and only if $D$ is a strongly discrete independent ring of Krull type \cite[Theorem 3.5]{cc23}.

Following \cite{egz08}, we say that $D$ is a {\em unique representation domain} (URD) if each $t$-invertible $t$-ideal of $D$ can be uniquely expressed as a finite $t$-product of pairwise $t$-comaximal $t$-ideals with prime radical. Then $D$ is a URD if and only if each nonzero principal ideal of $D$ can be written as a finite $t$-product of pairwise $t$-comaximal $t$-ideals with prime radical, if and only if $D$ is $t$-treed and each nonzero principal of $D$ has only finitely many minimal prime ideals \cite[Corollary 2.12]{egz08}. Hence, we have the following corollary, while a URD need not be a $w$-VIFD; see, for example, \cite[Corollary 2.17]{egz08} and Corollary~\ref{Corollary 4.6}.

\begin{proposition}\label{Proposition 4.2}
Let $D$ be an integral domain\textnormal{:}
\begin{enumerate}
\item If $D$ is a ZPUI domain, then $D$ is a VIFD.
\item If $D$ is a $w$-ZPUI domain, then $D$ is a $w$-VIFD.
\item If $D$ is a $w$-VIFD that is $t$-treed, then $D$ is a URD.
\end{enumerate}
\end{proposition}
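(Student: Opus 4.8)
The plan is to reduce each of the three parts to the structural characterizations already in hand, principally Theorem~\ref{Theorem 3.15}, together with the quoted descriptions of $(w\textnormal{-})$ZPUI domains and URDs. For part (1), I would first recall that a ZPUI domain is exactly a strongly discrete ${\rm h}$-local Pr\"ufer domain \cite[Theorem 2.3]{o00}, so in particular it is an ${\rm h}$-local Pr\"ufer domain. Read through the $d$-operation, a Pr\"ufer domain is precisely a P$d$MD (every nonzero finitely generated ideal is $d$-invertible, i.e.\ invertible) and ${\rm h}$-local means $d$-${\rm h}$-local. Hence $D$ is a $d$-${\rm h}$-local P$d$MD, and the implication $(1)\Rightarrow(4)$ of Theorem~\ref{Theorem 3.15} with $*=d$ gives that $D$ is a $d$-VIFD, i.e.\ a VIFD.

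For part (2), I would proceed along the same lines but with the $t$-operation. A $w$-ZPUI domain is a strongly discrete independent ring of Krull type \cite[Theorem 3.5]{cc23}, hence in particular a weakly Matlis P$v$MD. Now weakly Matlis means $t$-${\rm h}$-local, and by Remark~\ref{Remark 3.14}(1) a P$v$MD is the same as a P$t$MD, so $D$ is a $t$-${\rm h}$-local P$t$MD. Theorem~\ref{Theorem 3.15} with $*=t$ then yields that $D$ is a $t$-VIFD. The remaining point is to upgrade this to a $w$-VIFD: since $\widetilde{t}=w=\widetilde{w}$, Corollary~\ref{Corollary 3.4} shows that being a $t$-VIFD and being a $w$-VIFD are equivalent, so $D$ is a $w$-VIFD. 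This $t/w$ interchange is the one genuinely non-formal step, and it is exactly what Corollary~\ref{Corollary 3.4} is designed to handle (note that the direct implication $w$-VIFD $\Rightarrow$ $t$-VIFD is the wrong direction here).

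For part (3), I would invoke the characterization of URDs: $D$ is a URD if and only if $D$ is $t$-treed and every nonzero principal ideal of $D$ has only finitely many minimal prime ideals \cite[Corollary 2.12]{egz08}. The $t$-treed hypothesis is given, so it remains to verify the finiteness of minimal primes. Since $D$ is a $w$-VIFD it is a $t$-VIFD (Corollary~\ref{Corollary 3.4}), so by Proposition~\ref{Proposition 3.3} each nonzero principal ideal $aD$ can be written as a finite $t$-product of valuation $t$-ideals, which are necessarily $t$-invertible (their $t$-product is the $t$-invertible ideal $aD$). Proposition~\ref{Proposition 2.11} then bounds $|\mathcal{P}(aD)|$ by the number of factors, so $aD$ has only finitely many minimal primes. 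Combining this with $t$-treedness gives that $D$ is a URD.

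I expect no serious obstacle here: the real content of Proposition~\ref{Proposition 4.2} is that the valuation-ideal factorization conditions sit neatly between the $(w\textnormal{-})$ZPUI conditions and the URD condition, and every implication is a translation into the language of a suitable star operation followed by an appeal to Theorem~\ref{Theorem 3.15}, Corollary~\ref{Corollary 3.4}, or Proposition~\ref{Proposition 2.11}. The main care needed is bookkeeping: matching \emph{Pr\"ufer}/\emph{PvMD} to \emph{P$d$MD}/\emph{P$t$MD} and \emph{${\rm h}$-local}/\emph{weakly Matlis} to \emph{$d$-${\rm h}$-local}/\emph{$t$-${\rm h}$-local}, and correctly invoking the $t$-$w$ equivalence of VIFD notions in part (2).
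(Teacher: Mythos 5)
Your proposal is correct, and for parts (1) and (2) it is essentially the paper's argument: the paper also reduces ZPUI to ``${\rm h}$-local Pr\"ufer'' and $w$-ZPUI to ``independent ring of Krull type'' and then invokes Theorem~\ref{Theorem 3.15}; your extra step of applying the theorem at $*=t$ and then converting $t$-VIFD to $w$-VIFD via Corollary~\ref{Corollary 3.4} (using $\widetilde{t}=w=\widetilde{w}$) is a harmless variant of applying the theorem directly at $*=w$, where $w$-${\rm h}$-local P$w$MD follows since $w$-${\rm Max}(D)=t$-${\rm Max}(D)$, and you correctly flag that the cheap implication $w$-VIFD $\Rightarrow$ $t$-VIFD points the wrong way. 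For part (3) you take a genuinely different route: the paper cites Proposition~\ref{Proposition 2.3}(2), Lemma~\ref{Lemma 3.2} and Theorem~\ref{Theorem 3.15}, i.e.\ it uses the $*$-treed hypothesis to get, via Theorem~\ref{Theorem 3.15}, that each nonzero principal ideal is a finite $t$-product of pairwise $t$-comaximal valuation $t$-ideals with prime radical, matching the first characterization of URDs quoted from \cite[Corollary 2.12]{egz08}; you instead use the second quoted characterization ($t$-treed plus finitely many minimal primes over each principal ideal) and obtain the finiteness of $\mathcal{P}(aD)$ from Proposition~\ref{Proposition 3.3} together with Proposition~\ref{Proposition 2.11}, after correctly observing that the factors are $t$-invertible because their $t$-product $aD$ is. Your route is slightly more economical in that it bypasses the ${\rm h}$-local P$*$MD structure supplied by Theorem~\ref{Theorem 3.15} and needs only the combinatorics of $*$-products of $*$-invertible valuation $*$-ideals, whereas the paper's route yields the stronger comaximal factorization data along the way; both are complete proofs.
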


\begin{proof}
(1) Let $D$ be a ZPUI domain. Then $D$ is an ${\rm h}$-local Pr\"ufer domain, and thus the result follows by Theorem~\ref{Theorem 3.15}.

(2) A $w$-ZPUI domain is an independent ring of Krull type. Thus, if $D$ is a $w$-ZPUI domain, then $D$ is a $w$-VIFD by Theorem~\ref{Theorem 3.15}.

(3) This follows from Proposition~\ref{Proposition 2.3}(2), Lemma~\ref{Lemma 3.2} and Theorem~\ref{Theorem 3.15}.
\end{proof}

The next result shows that the localization of a $w$-VIFD is also a $w$-VIFD as in the case of VIFD that every localization of a VIFD is a VIFD by Lemma~\ref{Lemma 3.9}.

\begin{proposition}\label{Proposition 4.3}
Let $D$ be a $w$-VIFD and let $S$ be a multiplicatively closed subset of $D$\textnormal{:}
\begin{enumerate}
\item $D_S$ is a $w$-VIFD.
\item If ${\rm Cl}_t(D_S)=\{0\}$, then $D_S$ is a VFD.
\item If $S=D\setminus\bigcup_{i=1}^n M_i$ for some $n\in\mathbb{N}$ and maximal $t$-ideals $M_i$ of $D$, then $D_S$ is a VFD.
\end{enumerate}
\end{proposition}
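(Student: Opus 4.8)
The plan is to prove (1) by a direct localization argument, to obtain (2) immediately from (1) together with Corollary~\ref{Corollary 3.6}, and to reduce (3) to (2) by verifying that ${\rm Cl}_t(D_S)=\{0\}$.

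For (1), it suffices to factor $aD_S$ for a nonzero $a\in D$, since every nonzero principal ideal of $D_S$ has this form. Because $D$ is a $w$-VIFD, I write $aD=(\prod_{i=1}^n I_i)_w$ with each $I_i$ a valuation ideal of $D$. Let $w_S$ be the induced star operation of finite type on $D_S$, so that $(J_S)_{w_S}=(J_w)_S$ for each $J\in F(D)$; applying this to $J=\prod_{i=1}^n I_i$ and using that localization commutes with finite products of ideals yields $aD_S=(aD)_S=(\prod_{i=1}^n (I_i)_S)_{w_S}$. By Lemma~\ref{Lemma 3.9} each $(I_i)_S$ is a valuation ideal of $D_S$, so $D_S$ is a $w_S$-VIFD. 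I then invoke two facts on $D_S$: first, by Proposition~\ref{Proposition 3.3} a $w_S$-VIFD is a $\widetilde{w_S}$-VIFD; second, $\widetilde{w_S}\leq w$ on $D_S$, so a $\widetilde{w_S}$-VIFD is a $w$-VIFD by the monotonicity of the VIFD property under $\leq$. Hence $D_S$ is a $w$-VIFD.

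Statement (2) is then immediate: by (1) the ring $D_S$ is a $w$-VIFD, and since $w\leq t$ on $D_S$, Corollary~\ref{Corollary 3.6} applied to $D_S$ with $*_1=w$ and $*_2=t$ shows that $D_S$ is a VFD once ${\rm Cl}_t(D_S)=\{0\}$. For (3), it therefore remains to verify ${\rm Cl}_t(D_S)=\{0\}$. I would first record that a $w$-VIFD is of finite $t$-character: by Proposition~\ref{Proposition 3.3} each $aD$ is a finite $w$-product of $t$-invertible valuation $t$-ideals, each contained in a unique maximal $t$-ideal by Corollary~\ref{Corollary 2.4}, so $aD$ lies in only finitely many maximal $t$-ideals. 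Consequently the maximal $t$-ideals of $D_S$ are exactly the surviving $(M_i)_S$, so $D_S$ has only finitely many maximal $t$-ideals. Finally, a domain with only finitely many maximal $t$-ideals has trivial $t$-class group (the $t$-analog of the fact that a semilocal domain has trivial Picard group): a $t$-invertible $t$-ideal is $t$-locally principal, and a gluing argument over the finitely many pairwise incomparable maximal $t$-ideals produces a global generator. Thus ${\rm Cl}_t(D_S)=\{0\}$ and $D_S$ is a VFD.

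The bookkeeping in (1) and the deduction of (2) are routine. The main obstacle is the last step of (3): identifying the maximal $t$-ideals of $D_S$ precisely as the surviving $(M_i)_S$, which requires the good behavior of the $w$-operation under localization for a finite-character domain, and proving that finitely many maximal $t$-ideals force ${\rm Cl}_t(D_S)=\{0\}$. For the latter I would either cite the known $t$-semilocal result or carry out the gluing directly, using that each $t$-invertible valuation $t$-ideal of $D_S$ is contained in a unique maximal $t$-ideal (Corollary~\ref{Corollary 2.4}) to reduce, via the $w$-VIFD factorization, to showing that such valuation $t$-ideals are principal; the delicate point is that distinct maximal $t$-ideals need not be comaximal as ordinary ideals, so the avoidance argument must be phrased $t$-theoretically.
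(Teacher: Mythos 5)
Parts (1) and (2) of your proposal are correct and essentially the paper's argument: the paper also localizes the $w$-factorization $aD=(\prod_{i=1}^n I_i)_w$ via the induced operation $w_S$ and Lemma~\ref{Lemma 3.9}, then uses Proposition~\ref{Proposition 3.3} to land back at ``$w$-VIFD'' (the paper converts $(\prod_i (I_i)_S)_{w_S}$ into a $t$-product by noting a $w_S$-invertible $w_S$-ideal is a $t$-ideal, while you pass through $\widetilde{w_S}\leq w$; both steps are fine), and (2) is the same application of Corollary~\ref{Corollary 3.5}/Corollary~\ref{Corollary 3.6}.

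Part (3), however, has a genuine gap. Your route reduces (3) to (2) by claiming that the maximal $t$-ideals of $D_S$ are exactly the $(M_i)_S$, so that $D_S$ is semi-$t$-local and hence has trivial $t$-class group. The second half of this is fine (if $t$-${\rm Max}$ is finite, then every maximal ideal is a maximal $t$-ideal, so $t$-invertible ideals are honestly invertible and ${\rm Pic}$ of a semiquasilocal domain vanishes; your worry about non-comaximal maximal $t$-ideals dissolves here). The unproved step is the first half: while ${\rm Max}(D_S)=\{(M_i)_S\}$ is clear from prime avoidance, it does \emph{not} follow that each $(M_i)_S$ is a $t$-ideal of $D_S$, because the $t$-operation is not compatible with localization in general. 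This is precisely Zafrullah's ``well-behaved prime $t$-ideals'' phenomenon: there exist domains with a maximal $t$-ideal $M$ such that $MD_M$ is not a $t$-ideal of $D_M$, and you neither prove nor cite that $w$-VIFDs avoid this pathology (finite $t$-character alone does not). Moreover, without identifying $t$-${\rm Max}(D_S)$, semiquasilocality is useless for the class group: a local domain can have nontrivial $t$-class group (e.g., a local Krull domain such as $k[[x,y,z]]/(xy-z^2)$ has ${\rm Cl}_t\cong\mathbb{Z}/2\mathbb{Z}$). The paper's proof sidesteps all of this by working upstairs in $D$, where the $M_i$ genuinely are maximal $t$-ideals: if $I$ is a $t$-invertible ideal of $D$, then $(II^{-1})_t=D$ forces $II^{-1}\nsubseteq M_i$ for every $i$, so by prime avoidance $II^{-1}$ meets $S$ and $I_S$ is invertible, hence principal since ${\rm Pic}(D_S)=\{0\}$. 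Feeding this into the computation from (1), each $(I_i)_S$ is a \emph{principal} valuation ideal, so every nonzero principal ideal of $D_S$ is a finite product of principal valuation ideals and $D_S$ is a VFD directly --- no computation of ${\rm Cl}_t(D_S)$ is needed (its triviality then follows a posteriori from the VFD property). You should replace your reduction to (2) by this extension argument.
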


\begin{proof}
(1) Let $A$ be a nonzero principal ideal of $D_S$. Then $A=aD_S$ for some nonzero nonunit $a$ of $D$. Hence, by assumption, $aD=(\prod_{i=1}^n I_i)_w$ for some valuation ideals $I_i$ of $D$, and since $\prod_{i=1}^n I_i$ is $t$-invertible, we have that

\[
A=\Big(\Big(\prod_{i=1}^n I_i\Big)_w\Big)_S=\Big(\Big(\prod_{i=1}^n I_i\Big)_S\Big)_{w_S}=\Big(\prod_{i=1}^n (I_i)_S\Big)_{w_S}=\Big(\prod_{i=1}^n (I_i)_S\Big)_t.
\]

\noindent
The last equality holds, since $(\prod_{i=1}^n (I_i)_S)_{w_S}$ is a $w_S$-invertible $w_S$-ideal of $D_S$, and thus it is a $t$-ideal of $D_S$. Note that $(I_i)_S$ is a valuation ideal by Lemma~\ref{Lemma 3.9}, and hence $D_S$ is a $w$-VIFD by Proposition~\ref{Proposition 3.3}.

(2) This follows from (1) and Corollary~\ref{Corollary 3.5}.

(3) If $I$ is a $t$-invertible ideal of $D$, then $II^{-1}\nsubseteq\bigcup_{i=1}^n M_i$, and hence $I_S$ is invertible. Thus, $I_S$ is principal because ${\rm Pic}(D_S)=\{0\}$. Thus, by the proof of (1) above, every nonzero principal ideal of $D_S$ can be written as a finite ($t$-)product of principal valuation ideals, which implies that $D_S$ is a VFD.
\end{proof}

Let $D[X]$ be the polynomial ring over $D$. A nonzero prime ideal $Q$ of $D[X]$ is called an {\em upper to zero} in $D[X]$ if $Q\cap D=(0)$. Following \cite{hz89}, we say that $D$ is a {\em UMT-domain} if each upper to zero in $D[X]$ is a maximal $t$-ideal of $D[X]$. It is known that $D$ is a P$v$MD if and only if $D$ is an integrally closed UMT-domain \cite[Proposition 3.2]{hz89}.

\begin{lemma}\label{Lemma 4.4}
Let $D$ be a UMT-domain, let $D[X]$ be the polynomial ring over $D$ and let $N_v=\{f\in D[X]\mid f\neq 0$ and $c(f)_v=D\}$. Then $D$ is a $t$-treed domain if and only if $D[X]_{N_v}$ is treed.
\end{lemma}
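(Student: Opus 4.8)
The plan is to reduce both ``treed'' conditions to statements about localizations at maximal (\,$t$-)ideals and then match the corresponding spectra. Recall from the theory of the Nagata ring that the maximal ideals of $D[X]_{N_v}$ are exactly the ideals $M[X]_{N_v}$ with $M\in t\textnormal{-}{\rm Max}(D)$, and that $M\mapsto M[X]_{N_v}$ is a bijection from $t\textnormal{-}{\rm Max}(D)$ onto ${\rm Max}(D[X]_{N_v})$. Since being treed is a local condition at maximal ideals, $D[X]_{N_v}$ is treed if and only if ${\rm Spec}\big((D[X]_{N_v})_{M[X]_{N_v}}\big)$ is linearly ordered for every $M\in t\textnormal{-}{\rm Max}(D)$; and by the description of $t$-treed recalled in Section~1, $D$ is $t$-treed if and only if ${\rm Spec}(D_M)$ is linearly ordered for every $M\in t\textnormal{-}{\rm Max}(D)$. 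So I would fix $M\in t\textnormal{-}{\rm Max}(D)$ and prove that ${\rm Spec}(D_M)$ and ${\rm Spec}\big((D[X]_{N_v})_{M[X]_{N_v}}\big)$ are order-isomorphic; ranging over $M$ then gives the equivalence.

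Next I would produce the order-isomorphism. A prime of $D[X]_{N_v}$ contained in $M[X]_{N_v}$ has the form $QD[X]_{N_v}$ for a prime $Q$ of $D[X]$ with $Q\subseteq M[X]$ and $Q\cap N_v=\emptyset$; set $P=Q\cap D\subseteq M$. The extended primes always survive: since $M_t=M\neq D$, every finitely generated $I\subseteq M$ has $I_v\neq D$, so no $f\in M[X]$ lies in $N_v$, i.e.\ $M[X]\cap N_v=\emptyset$, and hence $P[X]\cap N_v=\emptyset$ for all $P\subseteq M$. Thus $P\mapsto P[X]D[X]_{N_v}$ is an order-embedding of ${\rm Spec}(D_M)=\{P\in{\rm Spec}(D)\mid P\subseteq M\}$ into ${\rm Spec}\big((D[X]_{N_v})_{M[X]_{N_v}}\big)$ (injective and order-reflecting because contraction to $D$ recovers $P$). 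The whole content of the lemma is that this embedding is \emph{onto}, equivalently that every surviving prime $Q\subseteq M[X]$ equals $(Q\cap D)[X]$; that is, no upper survives in $N_v$. This is precisely where the UMT hypothesis is used.

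For an upper to zero ($P=0$) the argument is immediate: by the UMT hypothesis every upper to zero $Q$ is a maximal $t$-ideal of $D[X]$, so a surviving such $Q$ would be a nonzero prime of $D[X]_{N_v}$ inside the maximal ideal $M[X]_{N_v}$, forcing $Q=M[X]$ and contradicting $Q\cap D=0\neq M$. For an upper $Q$ to a nonzero prime $P$, the claim that $Q$ meets $N_v$ is the genuine technical core; I would deduce it from the UMT property by localizing at $P$ (UMT is inherited by localizations), reducing to an upper over the quasi-local UMT-domain $D_P$ and then tracking the content condition $c(\cdot)_v=D$ via the content characterization of UMT-domains from \cite{hz89}. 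I expect this classification of the surviving primes — cleanly ruling out uppers to nonzero primes — to be the main obstacle and the one essential use of UMT, all remaining steps being formal. Once surjectivity is established, the embedding is an order-isomorphism ${\rm Spec}(D_M)\cong{\rm Spec}\big((D[X]_{N_v})_{M[X]_{N_v}}\big)$, so one spectrum is linearly ordered exactly when the other is, and quantifying over $M\in t\textnormal{-}{\rm Max}(D)$ yields that $D$ is $t$-treed if and only if $D[X]_{N_v}$ is treed.
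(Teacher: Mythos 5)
Your reduction is in substance the same route the paper takes: the paper's entire proof is a one-line appeal to \cite[Theorem 3.1]{hz89}, which asserts that $D$ is a UMT-domain if and only if \emph{every} prime ideal of $D[X]_{N_v}$ is extended from $D$; combined with Kang's description ${\rm Max}(D[X]_{N_v})=\{M[X]_{N_v}\mid M\in t\textnormal{-}{\rm Max}(D)\}$ and the local characterizations of treed and $t$-treed, this yields exactly your order-isomorphism ${\rm Spec}(D_M)\cong{\rm Spec}\big((D[X]_{N_v})_{M[X]_{N_v}}\big)$ for each $M\in t\textnormal{-}{\rm Max}(D)$. All of your formal bookkeeping is correct: the survival criterion ($P\subseteq M$ with $M$ a $t$-ideal forces $c(f)_v\subseteq M$ for $f\in P[X]$, so $P[X]\cap N_v=\emptyset$), the order-embedding $P\mapsto P[X]D[X]_{N_v}$ with contraction as inverse, and the observation that the lemma reduces to surjectivity of this embedding.

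The gap is that you leave the crux unproved. The claim that an upper $Q$ to a \emph{nonzero} prime $P$ must meet $N_v$ --- equivalently, that every surviving prime of $D[X]$ is extended --- is exactly where the lemma lives, and you offer only a plan (localize at $P$, invoke inheritance of UMT under localization, ``track the content condition''), explicitly flagging it as the expected main obstacle. As a self-contained argument this is a genuine hole: the localization sketch omits the actual mechanism (in \cite{hz89} this direction runs through the integral closure of $D_P$ being Pr\"ufer, not through a formal content computation), and inheritance of UMT by localization is itself a nontrivial result from the same source. The honest repair is simply to cite \cite[Theorem 3.1]{hz89} in full, since the missing step is that theorem verbatim --- which is what the paper does, and which also subsumes your separate treatment of uppers to zero. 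One smaller slip in that separate treatment: from ``$QD[X]_{N_v}$ lies inside the maximal ideal $M[X]_{N_v}$'' you cannot conclude $Q=M[X]$ by containment alone; you need that $M[X]$ is a proper $t$-ideal of $D[X]$ (because $(M[X])_t=M_t[X]=M[X]$) together with the $t$-maximality of $Q$ supplied by the UMT hypothesis, and then $Q\cap D=(0)\neq M$ gives the contradiction.
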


\begin{proof}
The result follows directly from the fact that $D$ is a UMT-domain if and only if each prime ideal of $D[X]_{N_v}$ is extended from $D$ \cite[Theorem 3.1]{hz89}.
\end{proof}

It is easy to see that if $D$ is a Krull domain, then $D$ is a $t$-VIFD, $t$-${\rm Spec}(D)$ is treed, and $D$ is an independent ring of Krull type. Now we characterize when $D$ is a $t$-VIFD under the assumption that $D$ is $t$-treed.

\begin{theorem}\label{Theorem 4.5}
Let $D$ be a $t$-treed domain, let $D[X]$ be the polynomial ring over $D$ and let $N_v=\{f\in D[X]\mid f\neq 0$ and $c(f)_v=D\}$. The following statements are equivalent\textnormal{:}
\begin{enumerate}
\item Every nonzero $t$-ideal of $D$ is a finite $t$-product of valuation ideals.
\item $D$ is a $t$-VIFD.
\item $D$ is an independent ring of Krull type.
\item $D[X]$ is an independent ring of Krull type.
\item $D[X]$ is a $t$-VIFD and $D$ is a UMT-domain.
\item $D[X]_{N_v}$ is a VFD and $D$ is a UMT-domain.
\item $D[X]_{N_v}$ is an ${\rm h}$-local Pr\"ufer domain.
\item Every nonzero prime ideal of $D$ contains a $t$-invertible valuation ideal.
\end{enumerate}
\end{theorem}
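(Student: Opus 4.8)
The plan is to split the eight conditions into three clusters---those intrinsic to $D$ (namely (1), (2), (3), (8)), those over $D[X]$ ((4), (5)), and those over the Nagata ring $D[X]_{N_v}$ ((6), (7))---and to link the clusters by transferring the independent-ring-of-Krull-type property along $D\subseteq D[X]\subseteq D[X]_{N_v}$. First I would dispose of the intrinsic cluster in one stroke. Since $D$ is $t$-treed by hypothesis, I apply Theorem~\ref{Theorem 3.15} with $*=t$: recalling that a P$t$MD is exactly a P$v$MD by Remark~\ref{Remark 3.14}(1) and that $t$-${\rm h}$-local means weakly Matlis, the conditions (1), (2), (8) are verbatim the conditions (6), (4), (5) of Theorem~\ref{Theorem 3.15}, while (3) (independent ring of Krull type $=$ weakly Matlis P$v$MD) is its condition (1). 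Hence (1)$\Leftrightarrow$(2)$\Leftrightarrow$(3)$\Leftrightarrow$(8) is immediate.

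For the cluster over $D[X]$ I would prove (3)$\Leftrightarrow$(4) and (4)$\Leftrightarrow$(5). The equivalence (3)$\Leftrightarrow$(4) rests on two transfer facts: $D$ is a P$v$MD if and only if $D[X]$ is, and $D$ is weakly Matlis if and only if $D[X]$ is; together these say $D$ is a weakly Matlis P$v$MD exactly when $D[X]$ is. For (4)$\Rightarrow$(5), note that $D[X]$, being an independent ring of Krull type, is a P$v$MD and hence $t$-treed, so Theorem~\ref{Theorem 3.15} applied to $D[X]$ makes $D[X]$ a $t$-VIFD; moreover $D[X]$ a P$v$MD forces $D$ to be a P$v$MD, hence an integrally closed UMT-domain by \cite{hz89}, so in particular a UMT-domain. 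For (5)$\Rightarrow$(4), a $t$-VIFD $D[X]$ is integrally closed and weakly Matlis by Corollary~\ref{Corollary 3.13}; integral closedness descends to $D$, and $D$ integrally closed together with the UMT hypothesis gives that $D$ is a P$v$MD by \cite{hz89}, whence $D[X]$ is a P$v$MD, and combined with the weakly Matlis property this makes $D[X]$ an independent ring of Krull type.

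For the Nagata-ring cluster I would establish (3)$\Leftrightarrow$(7) and (6)$\Leftrightarrow$(7). The link (3)$\Leftrightarrow$(7) uses the classical facts that $D$ is a P$v$MD if and only if $D[X]_{N_v}$ is a Pr\"ufer (indeed B\'ezout) domain, and that the maximal ideals of $D[X]_{N_v}$ are precisely the $MD[X]_{N_v}$ with $M\in t$-${\rm Max}(D)$; under this correspondence finite $t$-character of $D$ matches finite character of $D[X]_{N_v}$ and the unique-containment condition matches, so ``weakly Matlis P$v$MD for $D$'' is the same as ``${\rm h}$-local Pr\"ufer for $D[X]_{N_v}$'' (here one uses that a Pr\"ufer domain has $t=d$, so ${\rm h}$-local coincides with weakly Matlis). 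For (6)$\Leftrightarrow$(7) I exploit that whenever $D[X]_{N_v}$ is Pr\"ufer it is in fact B\'ezout, so ${\rm Pic}(D[X]_{N_v})=\{0\}$ and, by Corollary~\ref{Corollary 3.6}, being a VIFD and being a VFD coincide for $D[X]_{N_v}$. Thus, given (7), the treed (Pr\"ufer) domain $D[X]_{N_v}$ is a VIFD by Theorem~\ref{Theorem 3.15} with $*=d$, hence a VFD, and $D$ is a UMT-domain because it is a P$v$MD; conversely, given (6), a VFD is integrally closed by Corollary~\ref{Corollary 2.6}(2), so $D$ is integrally closed, hence a P$v$MD by the UMT hypothesis and \cite{hz89}, making $D[X]_{N_v}$ Pr\"ufer, and then the VFD $\Rightarrow$ VIFD implication together with treedness yields (7).

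I expect the main obstacle to be the transfer of the weakly Matlis property---finite $t$-character together with unique containment---across the extensions $D\subseteq D[X]\subseteq D[X]_{N_v}$, that is, the non-P$v$MD parts of (3)$\Leftrightarrow$(4) and of the ``weakly Matlis $\Leftrightarrow$ ${\rm h}$-local'' half of (3)$\Leftrightarrow$(7). This is where one must control the $t$-spectrum of $D[X]$ and the spectrum of $D[X]_{N_v}$; the P$v$MD hypothesis is precisely what tames the uppers to zero (via the UMT property and Lemma~\ref{Lemma 4.4}), and it is the reason the statement is phrased for $t$-treed $D$ with the UMT condition built into (5) and (6). Everything else is bookkeeping with the already established Theorem~\ref{Theorem 3.15}, Corollary~\ref{Corollary 3.13}, Corollary~\ref{Corollary 3.6}, Corollary~\ref{Corollary 2.6} and the standard Nagata-ring dictionary.
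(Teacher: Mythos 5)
Your proposal is correct, and its first half---reading (1), (2), (3), (8) off Theorem~\ref{Theorem 3.15} with $*=t$, using Remark~\ref{Remark 3.14}(1) and the identification of $t$-${\rm h}$-local with weakly Matlis---is exactly the paper's argument. In the second half, however, you traverse a genuinely different implication graph. The paper proves the cycle (4)$\Rightarrow$(5)$\Rightarrow$(6)$\Rightarrow$(7)$\Rightarrow$(4): for (5)$\Rightarrow$(6) it invokes Kang's result that ${\rm Cl}_t(D[X]_{N_v})=\{0\}$ together with Proposition~\ref{Proposition 4.3}(2) (localizations of $w$-VIFDs with trivial $t$-class group are VFDs); for (6)$\Rightarrow$(7) it uses Lemma~\ref{Lemma 4.4} ($D$ being $t$-treed and UMT forces $D[X]_{N_v}$ treed) plus the theorem of \cite{cr20} that a treed VFD is an independent ring of Krull type; and for (7)$\Rightarrow$(4) it simply cites \cite[Lemma 2.2]{cc23}. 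You instead prove (5)$\Rightarrow$(4) directly---Corollary~\ref{Corollary 3.13} makes $D[X]$ integrally closed and weakly Matlis, integral closedness descends to $D$, and the Houston--Zafrullah characterization \cite{hz89} plus Kang \cite{k89} upgrades this to $D[X]$ being an independent ring of Krull type---and you handle the Nagata cluster by (3)$\Leftrightarrow$(7) (in effect reproving the cited \cite[Lemma 2.2]{cc23} from Kang's dictionary between $t$-${\rm Max}(D)$ and ${\rm Max}(D[X]_{N_v})$) and by (6)$\Leftrightarrow$(7), exploiting that a Pr\"ufer $D[X]_{N_v}$ is B\'ezout, so ${\rm Pic}(D[X]_{N_v})=\{0\}$, whence Corollary~\ref{Corollary 3.6} identifies VFD with VIFD and Theorem~\ref{Theorem 3.15} with $*=d$ closes the loop. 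Your arrangement avoids Proposition~\ref{Proposition 4.3}, Lemma~\ref{Lemma 4.4} and the class-group computation entirely, at the price of redoing the Nagata-ring transfer by hand; the paper's arrangement is shorter on the page precisely because it outsources those transfers to the literature.

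One caution. Your standalone transfer claim that ``$D$ is weakly Matlis if and only if $D[X]$ is weakly Matlis'' is delicate in full generality: an upper to zero $Q$ in $D[X]$ with $c(Q)_t\neq D$ is contained in $M[X]$ for \emph{every} maximal $t$-ideal $M$ of $D$ containing $c(Q)$, so uniqueness of containment upstairs does not follow from uniqueness downstairs without some control of the uppers to zero. In your application this is harmless, since you only invoke the transfer jointly with the P$v$MD property, under which uppers to zero are maximal $t$-ideals (the UMT property); but you should either restrict the claim to that setting or cite \cite[Corollary 2.9]{acz13}, which is the packaged form (independent ring of Krull type transfers between $D$ and $D[X]$) that the paper uses for (3)$\Leftrightarrow$(4).
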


\begin{proof}
(1) $\Rightarrow$ (2) This is clear.

(2) $\Rightarrow$ (8) $\Rightarrow$ (3) $\Rightarrow$ (1) This follows from Theorem~\ref{Theorem 3.15}.

(3) $\Leftrightarrow$ (4) See \cite[Corollary 2.9]{acz13}.

(4) $\Rightarrow$ (5) Note that $D[X]$ is a P$v$MD, so $D$ is a P$v$MD \cite[Theorem 3.7]{k89} and $D[X]$ is $t$-treed. Thus, the result follows by the equivalence of (2) and (3) above.

(5) $\Rightarrow$ (6) ${\rm Cl}_t(D[X]_{N_v})=\{0\}$ by \cite[Theorem 2.14, Proposition 2.1 and Corollary 2.3]{k89}. Thus, $D[X]_{N_v}$ is a VFD by Propositions~\ref{Proposition 3.3} and \ref{Proposition 4.3}(2).

(6) $\Rightarrow$ (7) Since $D$ is a UMT-domain, $D[X]_{N_v}$ is treed by Lemma~\ref{Lemma 4.4}. Thus, $D[X]_{N_v}$ is an independent ring of Krull type \cite[Theorem 3.4]{cr20}. Moreover, since each maximal ideal of $D[X]_{N_v}$ is a $t$-ideal, the result follows.

(7) $\Rightarrow$ (4) See \cite[Lemma 2.2]{cc23}.
\end{proof}

We say that $D$ is a {\em generalized Krull domain} (in the sense of \cite{p72}) if $D$ is a P$v$MD such that $t$-$\dim(D)\leq 1$ and $D$ is of finite $t$-character. Recall that $t$-$\dim(D)=1$ if and only if $D$ is not a field and each maximal $t$-ideal of $D$ is a height-one prime ideal, so an integral domain of $t$-dimension one is $t$-treed. The class of integral domains of $t$-dimension at most one includes Krull domains, generalized Krull domains, and one-dimensional integral domains.

\begin{corollary}\label{Corollary 4.6}
Let $D$ be an integral domain with $t$-$\dim(D)=1$. The following statements are equivalent\textnormal{:}
\begin{enumerate}
\item Each nonzero $t$-ideal of $D$ is a finite $t$-product of valuation ideals.
\item $D$ is a $t$-VIFD.
\item $D$ is a generalized Krull domain.
\item $D$ is a ring of Krull type.
\end{enumerate}
\end{corollary}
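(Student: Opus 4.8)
The plan is to reduce everything to Theorem~\ref{Theorem 4.5} by first observing that the standing hypothesis $t$-$\dim(D)=1$ forces $D$ to be $t$-treed. Indeed, as noted in the paragraph preceding the corollary, an integral domain of $t$-dimension one is $t$-treed, so Theorem~\ref{Theorem 4.5} is available to us with no further work. I would also record the other consequence of $t$-$\dim(D)=1$ that drives the argument: every nonzero prime $t$-ideal of $D$ is a maximal $t$-ideal.

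First I would dispatch (1) $\Leftrightarrow$ (2). These are verbatim statements (1) and (2) of Theorem~\ref{Theorem 4.5}, so their equivalence is immediate once we know $D$ is $t$-treed.

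The key bookkeeping step is to translate ``$t$-VIFD'' into the language of rings of Krull type. By Theorem~\ref{Theorem 4.5}, $D$ is a $t$-VIFD if and only if $D$ is an independent ring of Krull type, i.e.\ a weakly Matlis P$v$MD, equivalently a $t$-${\rm h}$-local P$v$MD. Here I would invoke $t$-$\dim(D)=1$: since every nonzero prime $t$-ideal of $D$ is then a maximal $t$-ideal, the ``unique containment'' clause in the definition of $t$-${\rm h}$-local holds automatically, so $D$ is $t$-${\rm h}$-local if and only if $D$ is of finite $t$-character (this is exactly the reasoning already used in the proof of Corollary~\ref{Corollary 3.17}). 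Consequently, $D$ is a $t$-VIFD if and only if $D$ is a P$v$MD of finite $t$-character.

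Finally I would match this against (3) and (4). A ring of Krull type is by definition a P$v$MD of finite $t$-character, which is precisely the condition just obtained, yielding (2) $\Leftrightarrow$ (4); and a generalized Krull domain is a P$v$MD of finite $t$-character with $t$-$\dim(D)\leq 1$, so since the hypothesis $t$-$\dim(D)=1$ already guarantees $t$-$\dim(D)\leq 1$, the extra clause is vacuous and (3) coincides with (4), yielding (3) $\Leftrightarrow$ (4). Chaining these gives the equivalence of all four statements. I do not anticipate any genuine obstacle: the substantive content has been packaged into Theorem~\ref{Theorem 4.5}, and the corollary is an exercise in unwinding definitions under the simplifying hypothesis $t$-$\dim(D)=1$. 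The only point demanding care is the observation that $t$-$\dim(D)=1$ simultaneously collapses $t$-${\rm h}$-local to finite $t$-character and renders the $t$-$\dim\leq 1$ clause of ``generalized Krull domain'' automatic; everything else is a direct citation.
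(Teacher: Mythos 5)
Your proposal is correct and follows essentially the same route as the paper: the paper likewise notes that $t$-$\dim(D)=1$ makes $D$ $t$-treed so that Theorem~4.5 applies, and that under this hypothesis the notions of independent ring of Krull type, generalized Krull domain, and ring of Krull type coincide. Your write-up merely makes explicit the detail the paper leaves as ``clear,'' namely that $t$-$\dim(D)=1$ renders every nonzero prime $t$-ideal maximal, collapsing $t$-${\rm h}$-local to finite $t$-character and making the $t$-$\dim\leq 1$ clause of ``generalized Krull'' vacuous.
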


\begin{proof}
It is clear that if $t$-$\dim(D)=1$, then $D$ is an independent ring of Krull type if and only if $D$ is a generalized Krull domain, if and only if $D$ is a ring of Krull type. Thus, the result follows directly from Theorem~\ref{Theorem 4.5}.
\end{proof}

Let $D(X)=\{\frac{f}{g}\mid f,g\in D[X]$ and $c(g)=D\}$. Then $D(X)$, called the {\em Nagata ring} of $D$, is a ring such that $D[X]\subseteq D(X)\subseteq D[X]_{N_v}$. It is known that each nonzero prime ideal of a treed domain is a $t$-ideal. Hence, a treed domain is a $t$-treed domain whose nonzero maximal ideals are $t$-ideals. We next characterize when a treed domain is a VIFD.

\begin{corollary}\label{Corollary 4.7}
Let $D$ be a treed domain. The following statements are equivalent\textnormal{:}
\begin{enumerate}
\item $D$ is an ${\rm h}$-local Pr\"ufer domain.
\item $D$ is a VIFD.
\item Each nonzero ideal of $D$ is a finite product of valuation ideals.
\item Every nonzero principal ideal of $D$ can be written as a finite product of comaximal valuation ideals.
\item Every nonzero principal ideal of $D$ can be written as a finite intersection of comaximal valuation ideals.
\item Every nonzero prime ideal of $D$ contains an invertible valuation ideal.
\item $D(X)$ is a VFD and $D$ is a UMT-domain.
\end{enumerate}
\end{corollary}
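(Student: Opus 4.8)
The plan is to obtain the equivalence of (1)--(4) and (6) directly by specializing Theorem~\ref{Theorem 3.15} to the $d$-operation, to deduce (5) from (4), and to treat the Nagata-ring condition (7) separately via Theorem~\ref{Theorem 4.5}. First I would note that a treed domain is precisely a $d$-treed domain, and that for $*=d$ the relevant notions all simplify: a P$d$MD is a Pr\"ufer domain, $d$-h-local means h-local, $d$-invertible means invertible, $d$-comaximal means comaximal, a valuation $d$-ideal is a valuation ideal, and a $d$-VIFD is a VIFD. Reading Theorem~\ref{Theorem 3.15} with $*=d$ under the standing hypothesis that $D$ is treed then matches its conditions (1), (3), (4), (5), (6) with conditions (1), (4), (2), (6), (3) of the corollary, respectively, giving their equivalence in one stroke.

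For (4)~$\Leftrightarrow$~(5) I would invoke the elementary fact that pairwise comaximal ideals $I_1,\dots,I_n$ satisfy $\prod_{i=1}^n I_i=\bigcap_{i=1}^n I_i$; hence a representation of a principal ideal as a finite product of comaximal valuation ideals is the same as a representation as the corresponding finite intersection, and conversely.

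The core of the argument is (1)~$\Leftrightarrow$~(7), for which the crucial observation is that if $D$ is a Pr\"ufer domain then $D(X)=D[X]_{N_v}$: every nonzero finitely generated ideal of a Pr\"ufer domain is invertible and hence divisorial, so $c(g)_v=D$ is equivalent to $c(g)=D$, and the two localizing sets coincide. For (1)~$\Rightarrow$~(7), an h-local Pr\"ufer domain is a weakly Matlis P$v$MD, i.e.\ an independent ring of Krull type, and is a UMT-domain, so Theorem~\ref{Theorem 4.5} (the implication (3)~$\Rightarrow$~(6)) shows that $D[X]_{N_v}=D(X)$ is a VFD. For (7)~$\Rightarrow$~(1), a VFD is integrally closed by Corollary~\ref{Corollary 2.6}, so $D(X)$ integrally closed forces $D$ integrally closed; an integrally closed UMT-domain is a P$v$MD, and a treed P$v$MD is Pr\"ufer because every maximal ideal is a $t$-ideal and hence each $D_M$ is a valuation domain. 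Now that $D$ is known to be Pr\"ufer we may again use $D(X)=D[X]_{N_v}$, and Theorem~\ref{Theorem 4.5} (the implication (6)~$\Rightarrow$~(3)) yields that $D$ is an independent ring of Krull type; for a Pr\"ufer domain this coincides with being weakly Matlis, i.e.\ h-local, so (1) holds.

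The step I expect to require the most care is this Nagata-ring equivalence: one must establish that $D$ is Pr\"ufer \emph{before} invoking the identity $D(X)=D[X]_{N_v}$ in the direction (7)~$\Rightarrow$~(1), and one should verify the descent of integral closedness from $D(X)$ to $D$ together with the reduction ``weakly Matlis P$v$MD that is treed $=$ h-local Pr\"ufer.'' All remaining implications are routine specializations of Theorems~\ref{Theorem 3.15} and~\ref{Theorem 4.5}.
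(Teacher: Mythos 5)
Your proof is correct, and its skeleton coincides with the paper's: (1)--(4) and (6) by reading Theorem~\ref{Theorem 3.15} with $*=d$, (4) $\Leftrightarrow$ (5) from $IJ=I\cap J$ for comaximal ideals, and (1) $\Leftrightarrow$ (7) by reduction to the equivalence (3) $\Leftrightarrow$ (6) of Theorem~\ref{Theorem 4.5}. The one genuine divergence is how you obtain the identity $D(X)=D[X]_{N_v}$. You bootstrap: from (7) you first extract Pr\"uferness of $D$ (VFDs are integrally closed, integral closedness descends along $D=D(X)\cap K$, integrally closed UMT gives P$v$MD, and treed P$v$MD gives Pr\"ufer), and only then identify the two localizations via invertibility, hence divisoriality, of finitely generated ideals. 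The paper short-circuits this: since $D$ is treed, \emph{every} nonzero prime, in particular every maximal ideal, is a $t$-ideal, so $t$-${\rm Max}(D)={\rm Max}(D)$ and $c(f)_v=D$ if and only if $c(f)=D$; thus $D(X)=D[X]_{N_v}$ holds from the outset, with no need to establish Pr\"uferness first, and the same observation yields at once that ``independent ring of Krull type'' coincides with ``${\rm h}$-local Pr\"ufer'' for treed domains, so that Theorem~\ref{Theorem 4.5} applies immediately in both directions. So the caution you flag --- that Pr\"uferness must precede the Nagata-ring identity --- is unnecessary under the standing treed hypothesis; your detour costs an extra descent argument but is logically sound, while the paper's route is shorter and isolates exactly where treedness enters.
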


\begin{proof}
(1) $\Leftrightarrow$ (2) $\Leftrightarrow$ (3) $\Leftrightarrow$ (4) $\Leftrightarrow$ (6) These follow directly from Theorem~\ref{Theorem 3.15}.

(1) $\Leftrightarrow$ (7) Since $D$ is treed, every nonzero prime of $D$ is a $t$-ideal. Moreover, if each maximal ideal of $D$ is a $t$-ideal, then (i) $D$ is an independent ring of Krull type if and only if $D$ is an ${\rm h}$-local Pr\"ufer domain and (ii) $D(X)=D[X]_{N_v}$. Thus, the result is an immediate consequence of Theorem~\ref{Theorem 4.5}.

(4) $\Leftrightarrow$ (5) This follows from the fact that if $I$ and $J$ are comaximal ideals of $D$, then $IJ=I\cap J$.
\end{proof}

Next, we provide a variant of \cite[Proposition 4.6]{cr20}. Recall that $D$ satisfies the {\rm principal ideal theorem} if every minimal prime ideal of a nonzero principal ideal of $D$ is of height one. For example, Noetherian domains and Krull domains satisfy the principal ideal theorem, while a VFD does (in general) not satisfy the principal ideal theorem.

\begin{theorem}\label{Theorem 4.8}
Let $D$ be a $t$-VIFD. The following statements are equivalent\textnormal{:}
\begin{enumerate}
\item $D$ is a generalized Krull domain.
\item $t$-$\dim(D)\leq 1$.
\item $D$ satisfies the principal ideal theorem.
\item $D$ is completely integrally closed.
\item $\bigcap_{n\in\mathbb{N}} (I^n)_t=(0)$ for each proper $t$-invertible $t$-ideal $I$ of $D$.
\end{enumerate}
\end{theorem}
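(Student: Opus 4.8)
The plan is to treat $t$-$\dim(D)\le 1$ as the central condition (2) and to show it is equivalent to each of the others, systematically exploiting that a $t$-VIFD is an integrally closed weakly Matlis domain (Corollary~\ref{Corollary 3.13}) whose localizations at maximal $t$-ideals are VFDs (Proposition~\ref{Proposition 3.10}). Concretely I aim for the scheme (1)$\Leftrightarrow$(2), (2)$\Rightarrow$(3),(4), (4)$\Rightarrow$(5), and then the two returns (3)$\Rightarrow$(2) and (5)$\Rightarrow$(2), which together close the cycle.

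First I would dispose of the easy links. The equivalence (1)$\Leftrightarrow$(2) is immediate: a generalized Krull domain has $t$-$\dim\le 1$ by definition, and conversely Corollary~\ref{Corollary 4.6} turns a $t$-VIFD of $t$-dimension one into a generalized Krull domain (the case $t$-$\dim(D)=0$, i.e. $D$ a field, being trivial). For (2)$\Rightarrow$(4) and (2)$\Rightarrow$(3) I pass through (1): writing $D=\bigcap_{M\in t\textnormal{-}{\rm Max}(D)}D_M$ with each $D_M$ a rank-one valuation domain (P$v$MD together with $t$-$\dim\le 1$), I use that rank-one valuation domains are completely integrally closed and that an intersection of completely integrally closed overrings is completely integrally closed to obtain (4); and that a minimal prime of a nonzero principal ideal is a nonzero prime $t$-ideal, hence a maximal $t$-ideal localizing to the maximal ideal of a rank-one $D_M$, hence of height one, to obtain (3). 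The implication (4)$\Rightarrow$(5) is a short general argument: if $0\neq a\in\bigcap_{n}(I^n)_t$ for a proper $t$-invertible $t$-ideal $I$, pick $y\in I^{-1}\setminus D$ (possible since $I^{-1}\supsetneq D$ for a proper $t$-invertible $t$-ideal); then $ay^n\in (I^n)_t(I^{-1})^n\subseteq(I^n(I^{-1})^n)_t=((II^{-1})^n)_t=D$ for all $n$, so $y$ is almost integral over $D$ and thus lies in $D$, a contradiction.

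The heart of the proof is closing the loop back to (2). Here the key observation is that $\bigcap_{n}(I^n)_t$ is a prime $t$-ideal for every proper $t$-invertible valuation $t$-ideal $I$ (Corollary~\ref{Corollary 2.10}(3)), and it lies strictly between $(0)$ and $\sqrt{I}$ whenever it is nonzero; so a single nonzero such intersection already produces a chain of prime $t$-ideals of length two, giving the easy half $\neg$(5)$\Rightarrow\neg$(2). For the genuinely needed converse $\neg$(2)$\Rightarrow\neg$(5) (and likewise $\neg$(2)$\Rightarrow\neg$(3)) I argue by contraposition from a chain $(0)\subsetneq Q\subsetneq P$ of prime $t$-ideals. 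Localizing at the unique maximal $t$-ideal $M\supseteq P$ (uniqueness by Corollary~\ref{Corollary 3.13}), the domain $D_M$ is a VFD of Krull dimension at least two, so by the VFD analog \cite[Proposition 4.6]{cr20} it fails the Principal Ideal Theorem; since primes of $D$ contained in $M$ correspond bijectively and height-preservingly to primes of $D_M$, the offending height-$\ge 2$ minimal prime of a principal ideal of $D_M$ descends to one of a principal ideal of $D$. This yields $\neg$(3) directly. Finally, choosing a height-one prime $Q'\subsetneq P'$ inside such a height-$\ge 2$ minimal prime $P'$, both $Q'$ and $P'$ are radicals of $t$-invertible valuation factors $I,J$ of suitable principal ideals (Proposition~\ref{Proposition 2.11}) with $\sqrt{I}=Q'\subsetneq P'=\sqrt{J}$, whence Corollary~\ref{Corollary 2.10}(4) gives $(0)\neq I\subseteq\bigcap_n(J^n)_t$, which is $\neg$(5).

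The main obstacle is exactly this reduction to the local statement that a VFD of Krull dimension at least two violates the Principal Ideal Theorem; I expect the cleanest route is to import it from the VFD theory of \cite{cr20} rather than to reprove it. The remaining delicate point is the faithful transfer between $D$ and $D_M$: one must check that the height-$\ge 2$ minimal prime found locally, together with the nested radicals of the valuation factors, corresponds to genuine $t$-invertible valuation $t$-ideals of $D$ with nonvanishing $t$-power intersection, for which the height-preserving localization bijection at the single maximal $t$-ideal and the factorization machinery of Proposition~\ref{Proposition 2.11} and Corollary~\ref{Corollary 2.10} are what make the argument go through. Everything else — the almost-integral computation, the primeness of $\bigcap_n(I^n)_t$, and the rank-one localization description of a generalized Krull domain — is routine given the results already established.
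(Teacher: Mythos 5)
Your easy pieces are fine and essentially match the paper: (1)$\Leftrightarrow$(2) via Corollary~\ref{Corollary 4.6}, (2)$\Rightarrow$(3),(4) via the rank-one localizations, and your almost-integrality argument for (4)$\Rightarrow$(5) is a correct variant of the paper's conductor computation with $(J:J)$. The genuine gap is the lemma on which \emph{both} of your return implications rest: the claim that \emph{a VFD of Krull dimension at least two fails the Principal Ideal Theorem} is false. Any local UFD of dimension two, e.g.\ $\mathbb{Q}[X,Y]_{(X,Y)}$, is a VFD (UFDs are VFDs) of Krull dimension two in which every minimal prime of a nonzero principal ideal has height one. What \cite[Proposition 4.6]{cr20} supplies --- and what the paper uses, in the opposite direction --- is that a VFD satisfying the Principal Ideal Theorem is a P$v$MD; since a VFD of large Krull dimension can perfectly well be a P$v$MD (indeed a UFD), no PIT violation follows from $\dim(D_M)\geq 2$. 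Nor can you substitute $t$-dimension: your chain $(0)\subsetneq Q\subsetneq P$ of prime $t$-ideals of $D$ localizes to a chain of primes of $D_M$, but prime $t$-ideals need not stay $t$-ideals under localization (the ``well-behaved'' problem), so $t$-$\dim(D_M)\geq 2$ is not justified either. Since both $\neg(2)\Rightarrow\neg(3)$ and $\neg(2)\Rightarrow\neg(5)$ route through this step, your cycle does not close.

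The paper avoids the dimension chain altogether. For (5)$\Rightarrow$(3) it takes $P$ minimal over a nonzero principal ideal and extracts a $t$-invertible valuation $t$-ideal factor $I$ with $P=\sqrt{I}$ \emph{exactly} (Propositions~\ref{Proposition 2.3}(2) and \ref{Proposition 2.11}); then for any prime $Q\subsetneq P$ and nonzero $x\in Q$, some valuation factor $J$ of $xD$ satisfies $xD\subseteq J\subseteq Q$, so $\sqrt{J}\subseteq Q\subsetneq P=\sqrt{I}$ and Corollary~\ref{Corollary 2.10}(4) forces $x\in J\subseteq\bigcap_{n\in\mathbb{N}}(I^n)_t=(0)$. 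The comparability of radicals that your sketch must engineer (via a height-one prime inside $P^{\prime}$, which need not exist in this generality, though any nonzero prime strictly below would serve) comes for free because the \emph{larger} prime is itself the radical of a valuation factor. The remaining return is then (3)$\Rightarrow$(1), proved by a local-global argument: finite $t$-character from Corollary~\ref{Corollary 3.13}, $D_M$ is a VFD inheriting PIT, hence a P$v$MD by \cite[Proposition 4.6]{cr20} used in its correct direction, $D$ is a P$v$MD by \cite[Theorem 4.1(2)]{efz16}, and $t$-treedness plus PIT then makes every prime $t$-ideal the unique height-one prime below it. A last small repair: in your ``easy half'' $\neg(5)\Rightarrow\neg(2)$, the ideal $I$ with $\bigcap_{n\in\mathbb{N}}(I^n)_t\neq(0)$ is an arbitrary proper $t$-invertible $t$-ideal, so before invoking Corollary~\ref{Corollary 2.10}(3) you must pass to a proper valuation factor $I_j$ of $I$, using $(I^n)_t\subseteq(I_j^n)_t$ for all $n$.
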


\begin{proof}
(1) $\Rightarrow$ (2), (4) It is obvious that $t$-$\dim(D)\leq 1$. If $M\in t$-${\rm Max}(D)$, then $D_M$ is a valuation domain of dimension $\leq 1$, and hence $D_M$ is completely integrally closed. Therefore, $D=\bigcap_{M\in t\textnormal{-}{\rm Max}(D)} D_M$ is completely integrally closed.

(2) $\Rightarrow$ (3) This is obvious.

(3) $\Rightarrow$ (1) We have to show that $D$ is a P$v$MD, $D$ is of finite $t$-character and $t$-$\dim(D)\leq 1$. It follows from Corollary~\ref{Corollary 3.13} that $D$ is of finite $t$-character. Let $M\in t$-${\rm Max}(D)$. Observe that $D_M$ satisfies the principal ideal theorem. Moreover, $D_M$ is a VFD by Proposition~\ref{Proposition 3.10}. Consequently, $D_M$ is a P$v$MD by \cite[Proposition 4.6]{cr20}. It follows from \cite[Theorem 4.1(2)]{efz16} that $D$ is a P$v$MD. Finally, let $P$ be a nonzero prime $t$-ideal of $D$. Since $D$ satisfies the principal ideal theorem, $P$ is the union of all height-one prime ideals of $D$ that are contained in $P$. Since $D$ is $t$-treed, there is precisely one height-one prime ideal of $D$ that is contained in $P$, and hence $P$ is a height-one prime ideal of $D$. Thus, $t$-$\dim(D)\leq 1$.

(4) $\Rightarrow$ (5) Let $I$ be a proper $t$-invertible $t$-ideal of $D$ and set $J=\bigcap_{n\in\mathbb{N}} (I^n)_t$. Observe that $I^{-1}J\subseteq J$, and thus $I^{-1}\subseteq (J:J)$. Assume that $J\not=(0)$. Then $(J:J)=D$ (since $D$ is completely integrally closed), and hence $I=I_v=I^{-1}=D$, a contradiction.

(5) $\Rightarrow$ (3) Let $P$ be a minimal prime ideal of a nonzero principal ideal of $D$. Since $D$ is a $t$-VIFD, there exists a $t$-invertible valuation $t$-ideal $I$ of $D$ such that $P$ is a minimal prime ideal of $I$. Let $Q$ be a prime ideal of $D$ such that $Q\subsetneq P$. It suffices to show that $Q=(0)$. Assume that $Q\not=(0)$. There exists some $x\in Q\setminus\{0\}$. We infer that $xD\subseteq J\subseteq Q$ for some $t$-invertible valuation $t$-ideal $J$ of $D$. Observe that $\sqrt{J}\subseteq Q\subsetneq P=\sqrt{I}$. Therefore, $x\in xD\subseteq J\subseteq\bigcap_{n\in\mathbb{N}} (I^n)_t=(0)$ by Corollary~\ref{Corollary 2.10}(4), a contradiction.
\end{proof}

\section{Almost valuation factorization domains}

We will say that $D$ is an {\em almost valuation factorization domain} (AVFD) if for each nonzero nonunit $a\in D$, there is an $n\in\mathbb{N}$ such that $a^n$ can be written as a finite product of valuation elements. Clearly, a VFD is an AVFD and a $t$-VIFD with torsion $t$-class group is an AVFD (see Proposition~\ref{Proposition 5.5}).

\begin{lemma}\label{Lemma 5.1}
Let $D$ be an integral domain and let $b\in D$ be a nonzero nonunit that can be written as a finite product of valuation elements of $D$\textnormal{:}
\begin{enumerate}
\item $\min\{k\in\mathbb{N}\mid b$ is a product of $k$ valuation elements of $D\}=|\mathcal{P}(bD)|$.
\item There are valuation elements $(y_P)_{P\in\mathcal{P}(bD)}$ of $D$ such that $b=\prod_{P\in\mathcal{P}(bD)} y_P$ and $\sqrt{y_QD}=Q$ for each $Q\in\mathcal{P}(bD)$.
\end{enumerate}
\end{lemma}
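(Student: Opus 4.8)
The plan is to reduce both statements to the already-established ideal-theoretic machinery of Proposition~\ref{Proposition 2.11}, which is the exact analog of Lemma~\ref{Lemma 5.1} phrased for finite $*$-products of $*$-invertible valuation $*$-ideals. The key observation is that a valuation element $y$ of $D$ is by definition a nonzero nonunit such that $yD$ is a valuation ideal, and since $yD$ is principal it is automatically invertible, hence a $d$-invertible valuation $d$-ideal. Thus the monoid of products of valuation elements (under multiplication of the generators) embeds into the monoid $\Omega$ of finite $d$-products of $d$-invertible valuation $d$-ideals via $y \mapsto yD$. Applying Proposition~\ref{Proposition 2.11} with $* = d$ to the principal ideal $bD$ will hand us most of what we need, after which the only extra work is to transfer the factorization of the ideal back to a factorization of the element, using that $D^{\times}$-associates of principal ideals correspond to the units absorbed into the generators.

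First I would fix a factorization $b = \prod_{i=1}^n x_i$ into valuation elements, so that $bD = \prod_{i=1}^n x_iD$ is an honest (not merely $d$-closed) product of the principal valuation ideals $x_iD$, each of which is a $d$-invertible valuation $d$-ideal of $D$. By Proposition~\ref{Proposition 2.11} applied to $bD \in \Omega$ with the $d$-operation, we obtain $|\mathcal{P}(bD)| = \min\{m \in \mathbb{N}_0 \mid bD \text{ is a product of } m \text{ $d$-invertible valuation $d$-ideals}\}$ and a family $(I(P))_{P \in \mathcal{P}(bD)}$ of $d$-invertible valuation $d$-ideals with $bD = \prod_{P \in \mathcal{P}(bD)} I(P)$ and $\sqrt{I(Q)} = Q$ for each $Q$. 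The crucial point making the element-level and ideal-level minima coincide is that for principal ideals the $d$-product is just the ordinary product, and every $d$-invertible valuation $d$-ideal appearing in a factorization of the \emph{principal} ideal $bD$ is itself principal: indeed $\prod_{P} I(P) = bD$ is principal and each $I(P)$ is invertible, so each $I(P)$ is invertible and, being a factor of a principal ideal in the group of invertible fractional ideals, is principal precisely when $\mathrm{Pic}$ vanishes on it — but here principality is forced locally because the product is principal and $D$ is not assumed to have trivial Picard group, so I must argue directly.

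The main obstacle will therefore be establishing that each valuation $d$-ideal $I(P)$ produced by Proposition~\ref{Proposition 2.11} is in fact \emph{principal}, i.e.\ of the form $y_PD$ for a valuation element $y_P$; without this, Proposition~\ref{Proposition 2.11} only yields an ideal factorization rather than the desired element factorization. To handle this I would proceed by induction on $n$, peeling off one principal valuation factor at a time: starting from $bD = \prod_{i=1}^n x_iD$ with the $x_i$ valuation elements, I use the structure result to reorganize the factors according to $\mathcal{P}(bD)$, invoking Proposition~\ref{Proposition 2.11}'s internal argument (that for each $P$ exactly one reorganized factor has radical $P$) together with the fact that a product $\prod_{i \in \mathcal{I}} x_iD = (\prod_{i \in \mathcal{I}} x_i)D$ of principal ideals is principal. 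Setting $y_P = \prod_{i : \sqrt{x_iD} \subseteq P} x_i$ gives a nonzero nonunit with $y_PD = I(P)$ a valuation ideal, hence $y_P$ a valuation element, with $\sqrt{y_PD} = \sqrt{I(P)} = P$ and $b = \prod_{P \in \mathcal{P}(bD)} y_P$. For part (1), the lower bound $|\mathcal{P}(bD)| \leq k$ for any factorization $b = \prod_{j=1}^k z_j$ into valuation elements follows because $\mathcal{P}(bD) \subseteq \{\sqrt{z_jD} \mid j \in [1,k]\}$ by the same argument as in the proof of Proposition~\ref{Proposition 2.11} (each minimal prime of $bD$ contains some $z_jD$, and $\sqrt{z_jD}$ is prime by Proposition~\ref{Proposition 2.3}(2)), while the construction in part (2) realizes exactly $|\mathcal{P}(bD)|$ factors, giving equality.
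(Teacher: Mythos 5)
Your overall instinct --- that Lemma~\ref{Lemma 5.1} is the element-level specialization of Proposition~\ref{Proposition 2.11} with $*=d$, and that the only genuine issue is obtaining \emph{principal} factors --- matches the intended route: the paper's proof is just a pointer to \cite[Lemma 1.12 and Proposition 1.13]{cr20}, whose argument is precisely the element-level version of the proof of Proposition~\ref{Proposition 2.11}. But your execution of the grouping step is broken in two concrete ways. First, the formula $y_P=\prod_{i:\sqrt{x_iD}\subseteq P}x_i$ has the inclusion backwards: since $P$ is \emph{minimal} over $bD$ and each $\sqrt{x_iD}$ is a prime containing $bD$, the condition $\sqrt{x_iD}\subseteq P$ forces $\sqrt{x_iD}=P$, so every factor whose radical strictly contains a minimal prime of $bD$ is dropped from all groups and $\prod_{P}y_P\neq b$. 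This really happens: in a two-dimensional valuation domain $V$ with primes $(0)\subsetneq P_1\subsetneq M$, take $b=xy$ with $\sqrt{xV}=P_1$ and $y\in M\setminus P_1$, so that $\sqrt{yV}=M$; then $\mathcal{P}(bV)=\{P_1\}$ and your formula returns $y_{P_1}=x\neq b$. The sets used inside the proof of Proposition~\ref{Proposition 2.11} are $\{i\mid P\subseteq\sqrt{I_i}\}$, the opposite inclusion.

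Second, even after correcting the direction, you must show that the sets $\mathcal{I}_P=\{i\mid P\subseteq\sqrt{x_iD}\}$, $P\in\mathcal{P}(bD)$, \emph{partition} $[1,n]$, and your justification --- ``Proposition~\ref{Proposition 2.11}'s internal argument (that for each $P$ exactly one reorganized factor has radical $P$)'' --- is not available to you: that singleton/disjointness conclusion is derived there only for a factorization of \emph{minimal} length, via the contradiction ``merge a group of size $\geq 2$ into a single valuation ideal and shorten the factorization'', and you never impose minimality on your element factorization. You also have no right to the identity $y_PD=I(P)$: the ideals $I(P)$ furnished by the \emph{statement} of Proposition~\ref{Proposition 2.11} come from some minimal ideal factorization, need not be principal, and need not match your grouped elements; for the same reason the statement of Proposition~\ref{Proposition 2.11} alone does not settle part (1), since it computes the minimum over products of invertible valuation \emph{ideals}, which is a priori smaller than the minimum over valuation \emph{elements}. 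The repair is to rerun the \emph{proof} of Proposition~\ref{Proposition 2.11} at the element level with $n$ minimal among factorizations of $b$ into valuation elements: each prime $\sqrt{x_iD}$ contains some minimal prime of $bD$, so the $\mathcal{I}_P$ cover $[1,n]$; each $P\in\mathcal{P}(bD)$ equals $\sqrt{x_jD}$ for some $j\in\mathcal{I}_P$, so $y_P=\prod_{i\in\mathcal{I}_P}x_i$ satisfies $\sqrt{y_PD}=P$, and $y_PD$ is a valuation ideal by Proposition~\ref{Proposition 2.9}(3) applied with $I=x_jD$; hence $y_P$ is a valuation element, and minimality forces $|\mathcal{I}_P|=1$. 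The count $n\leq\bigl|\bigcup_{P}\mathcal{I}_P\bigr|\leq\sum_{P}|\mathcal{I}_P|=|\mathcal{P}(bD)|\leq n$ then shows $i\mapsto\sqrt{x_iD}$ is a bijection onto $\mathcal{P}(bD)$, which yields parts (1) and (2) simultaneously.
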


\begin{proof}
This can be proved along the same lines as \cite[Lemma 1.12 and Proposition 1.13]{cr20}.
\end{proof}

For the next result we mimic the proof of \cite[Proposition 2.1]{cr20} that a VFD is a Schreier domain.

\begin{proposition}\label{Proposition 5.2}
Let $D$ be an integral domain and let $\Omega$ be the set of all finite products of units and valuation elements of $D$. Then for each $x,y,z\in\Omega$ with $x\mid_D yz$, there are some $a,b\in\Omega$ such that $x=ab$, $a\mid_D y$, $b\mid_D z$ and $\frac{y}{a},\frac{z}{b}\in\Omega$.
\end{proposition}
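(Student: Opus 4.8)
The plan is to mirror the proof of Proposition~\ref{Proposition 2.12} one level down, replacing the $*$-invertible valuation $*$-ideals by valuation elements (and units) and the containment $\subseteq$ by divisibility $\mid_D$. Indeed, for $c,d\in D$ with $cD,dD$ invertible, $c\mid_D d$ is equivalent to $dD\subseteq cD$, so the divisibilities $x\mid_D yz$, $a\mid_D y$, $b\mid_D z$ are exactly the inclusions of the principal ideals $xD,yD,zD$ appearing in Proposition~\ref{Proposition 2.12}. Using Lemma~\ref{Lemma 5.1} I would first write $y=\prod_{P\in\mathcal{P}(yD)}y_P$ and $z=\prod_{Q\in\mathcal{P}(zD)}z_Q$ as products of valuation elements whose radicals are the finitely many minimal primes of $yD$ and $zD$; the parameters $|\mathcal{P}(xD)|$ and $|\mathcal{P}(yD)|+|\mathcal{P}(zD)|$ will drive a double induction.

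The one fact I must establish before running the induction is a divisor-closedness lemma for valuation elements: \emph{if $p$ is a valuation element of $D$ and $p=ab$ in $D$, then each of $a,b$ is a unit or a valuation element.} This is short: choosing a valuation overring $V$ with $pV\cap D=pD$, any $s\in aV\cap D$ satisfies $sb\in pV\cap D=pD$, whence $a\mid_D s$ and $s\in aD$; thus $aV\cap D=aD$, and likewise for $b$. This lemma is precisely what keeps the complementary factors produced below inside $\Omega$, and it is the reason the argument is carried out with elements rather than obtained by citing Proposition~\ref{Proposition 2.12} directly: the ideal-level factors there are merely invertible, not principal, so they need not arise from elements of $\Omega$.

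With this in hand I would run the same double induction as in Proposition~\ref{Proposition 2.12}. In the base case $x=p$ is a single valuation element; here $\sqrt{pD}$ is prime by Proposition~\ref{Proposition 2.3}(2), so from $yz\in pD\subseteq\sqrt{pD}$ one of $y,z$, say $y$, lies in $\sqrt{pD}$, and then some factor $y_Q$ lies in $\sqrt{pD}$, giving $\sqrt{y_QD}\subseteq\sqrt{pD}$. Comparability of these two valuation elements (Corollary~\ref{Corollary 2.10}(2)) splits into two cases. If $p\mid_D y_Q$, I set $a=p$, $b=1$ and read off $y/p=(y_Q/p)\prod_{P\neq Q}y_P\in\Omega$ by the divisor lemma. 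If instead $y_Q\mid_D p$ properly, I factor $p=y_Qc$ with $c\in\Omega$, cancel $y_Q$ to get $c\mid_D (y/y_Q)z$ with strictly smaller parameter $|\mathcal{P}(y/y_Q\, D)|+|\mathcal{P}(zD)|$, apply the inductive hypothesis, and reassemble $a,b$ by absorbing $y_Q$ into the $y$-side factor. The outer induction on $|\mathcal{P}(xD)|$ then peels off a single valuation factor $x_1$ of $x$, solves for $x_1$ by the base case to obtain $x_1=a_1b_1$ with $y/a_1,z/b_1\in\Omega$, cancels $x_1$ to reduce $x/x_1\mid_D (y/a_1)(z/b_1)$ to fewer factors, and multiplies the two partial solutions; since $\Omega$ is multiplicatively closed, $a=a_1a_2$ and $b=b_1b_2$ lie in $\Omega$ and meet all four requirements.

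The main obstacle is the base case, and within it the bookkeeping of the inner induction: I must choose the split of $p$ so that the recursion in the second case genuinely lowers $|\mathcal{P}(yD)|+|\mathcal{P}(zD)|$, and so that every quotient appearing ($y_Q/p$, $p/y_Q$, and the complementary factors returned by the inductive hypothesis) is covered by the divisor-closedness lemma and hence lies in $\Omega$. Once this accounting is arranged exactly as in Proposition~\ref{Proposition 2.12}, the remainder is formal.
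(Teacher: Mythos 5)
Your proposal is correct and takes essentially the same route as the paper's own proof: the identical double induction (outer on the number of valuation factors of $x$, inner on $|\mathcal{P}(yD)|+|\mathcal{P}(zD)|$ via Lemma~\ref{Lemma 5.1}), the same base-case dichotomy from primeness of $\sqrt{pD}$ and comparability of $y_QD$ with $pD$, and the same reliance on divisor-closedness of valuation elements to keep all quotients in $\Omega$ --- a fact the paper imports as \cite[Proposition 1.1(3)]{cr20} and which you correctly reprove directly. Your only deviations (supplying that lemma's proof yourself, and peeling off the single valuation factor of $x$ before, rather than after, invoking the outer induction hypothesis) are immaterial reorderings of the paper's argument.
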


\begin{proof}
If $r,s\in\Omega$, then we write $r\mid_{\Omega} s$ if $r\mid_D s$ and $\frac{s}{r}\in\Omega$. Note that $\Omega$ consists precisely of the units and the finite nonempty products of valuation elements of $D$. Therefore, it is sufficient to show by induction that for all $n\in\mathbb{N}$, $x\in D$ and $y,z\in\Omega$ such that $x$ is a product of $n$ valuation elements of $D$ and $x\mid_D yz$, there are some $a,b\in\Omega$ such that $x=ab$, $a\mid_{\Omega} y$ and $b\mid_{\Omega} z$.

\medskip
First let $n=1$. By Lemma~\ref{Lemma 5.1}(1), it is sufficient to show by induction that for all $k\in\mathbb{N}$, for each valuation element $x\in D$ and for all $y,z\in\Omega$ such that $x\mid_D yz$ and $|\mathcal{P}(yD)|+|\mathcal{P}(zD)|=k$, there are some $a,b\in\Omega$ such that $x=ab$, $a\mid_{\Omega} y$ and $b\mid_{\Omega} z$. Let $k\in\mathbb{N}$, let $x\in D$ be a valuation element and let $y,z\in\Omega$ be such that $x\mid_D yz$ and $|\mathcal{P}(yD)|+|\mathcal{P}(zD)|=k$. Since $yz\in xD\subseteq\sqrt{xD}\in {\rm Spec}(D)$ by \cite[Proposition 1.7(1)]{cr20}, we have that $y\in\sqrt{xD}$ or $z\in\sqrt{xD}$. Without restriction let $y\in\sqrt{xD}$. By Lemma~\ref{Lemma 5.1}(2) there are valuation elements $(y_P)_{P\in\mathcal{P}(yD)}$ of $D$ such that $y=\prod_{P\in\mathcal{P}(yD)} y_P$ and $\sqrt{y_QD}=Q$ for each $Q\in\mathcal{P}(yD)$. Consequently, there is some $Q\in\mathcal{P}(yD)$ such that $y_Q\in\sqrt{xD}$, and hence $\sqrt{y_QD}\subseteq\sqrt{xD}$. We infer by \cite[Corollary 1.2(1)]{cr20} that $y_QD\subseteq xD$ or $xD\subseteq y_QD$. Set $y^{\prime}=\prod_{P\in\mathcal{P}(yD)\setminus\{Q\}} y_P$.

\medskip
{\noindent}{{\bf Case 1.}} $y_QD\subseteq xD$. Then $y_Q=xu$ for some $u\in D$. It follows from \cite[Proposition 1.1(3)]{cr20} that $u$ is a unit or a valuation element of $D$. Set $a=x$ and $b=1$. Obviously, $a,b\in\Omega$, $x=ab$ and $b\mid_{\Omega} z$. Moreover, $a\mid_D y_Q\mid_D y$ and $\frac{y}{a}=uy^{\prime}\in\Omega$, and thus $a\mid_{\Omega} y$.

\medskip
{\noindent}{{\bf Case 2.}} $xD\subsetneq y_QD$. Then $x=y_Qw$ for some nonunit $w\in D$. We infer by \cite[Proposition 1.1(3)]{cr20} that $w$ is a valuation element of $D$. Observe that $y_Qw=x\mid_D yz=y_Qy^{\prime}z$, and thus $w\mid_D y^{\prime}z$. Furthermore, $\mathcal{P}(y^{\prime}D)=\mathcal{P}(yD)\setminus\{Q\}$, and hence $|\mathcal{P}(y^{\prime}D)|+|\mathcal{P}(zD)|<k$. We infer by the induction hypothesis that there are some $a^{\prime},b\in\Omega$ such that $w=a^{\prime}b$, $a^{\prime}\mid_{\Omega} y^{\prime}$ and $b\mid_{\Omega} z$. Set $a=y_Qa^{\prime}$. Then $a\in\Omega$, $x=y_Qw=y_Qa^{\prime}b=ab$ and $a\mid_{\Omega} y$. This concludes the proof of the base step.

\medskip
Now let $n\in\mathbb{N}$, $x\in D$ and $y,z\in\Omega$ be such that $x$ is a product of $n+1$ valuation elements of $D$ and $x\mid_D yz$. Then $x=\prod_{i=1}^{n+1} v_i$ for some valuation elements $v_i$ of $D$. Set $x^{\prime}=\prod_{i=1}^n v_i$. Then $x^{\prime}\mid_D yz$. We infer by the induction hypothesis that there are some $a^{\prime},b^{\prime}\in\Omega$ such that $x^{\prime}=a^{\prime}b^{\prime}$, $a^{\prime}\mid_{\Omega} y$ and $b^{\prime}\mid_{\Omega} z$. Since $a^{\prime}b^{\prime}v_{n+1}=x\mid_D yz$, we infer that $v_{n+1}\mid_D\frac{y}{a^{\prime}}\frac{z}{b^{\prime}}$. Since $v_{n+1}$ is a valuation element of $D$ and $\frac{y}{a^{\prime}},\frac{z}{b^{\prime}}\in\Omega$, it follows by the base step that there are some $a^{\prime\prime},b^{\prime\prime}\in\Omega$ such that $v_{n+1}=a^{\prime\prime}b^{\prime\prime}$, $a^{\prime\prime}\mid_{\Omega}\frac{y}{a^{\prime}}$ and $b^{\prime\prime}\mid_{\Omega}\frac{z}{b^{\prime}}$. Set $a=a^{\prime}a^{\prime\prime}$ and $b=b^{\prime}b^{\prime\prime}$. Then $a,b\in\Omega$, $x=x^{\prime}v_{n+1}=a^{\prime}b^{\prime}a^{\prime\prime}b^{\prime\prime}=ab$, $a\mid_{\Omega} y$ and $b\mid_{\Omega} z$.
\end{proof}

We say that $D$ is an {\em almost Schreier domain} if for all nonzero $x,y,z\in D$ with $x\mid_D yz$ there are $n\in\mathbb{N}$ and $a,b\in D$ such that $x^n=ab$, $a\mid_D y^n$ and $b\mid_D z^n$ \cite{dk10}. It is known that if $D$ is an almost Schreier domain, then ${\rm Cl}_t(D)$ is a torsion group \cite[Theorem 3.1]{dk10} and if $D$ is an integrally closed almost Schreier domain, then $D[X]$ is an almost Schreier domain \cite[Theorem 4.4]{dk10}.

\begin{theorem}\label{Theorem 5.3}
Let $D$ be an integral domain. Then $D$ is an AVFD if and only if $D$ is an almost Schreier domain and every nonzero prime $t$-ideal of $D$ contains a valuation element of $D$.
\end{theorem}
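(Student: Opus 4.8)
The plan is to mirror the proof of Theorem~\ref{Theorem 3.8}, but to work with elements and their powers instead of $t$-ideals, replacing the monoid of $*$-products of valuation $*$-ideals by the root-closure of the monoid of products of valuation elements. Throughout, let $\Omega$ denote the set of finite products of units and valuation elements of $D$ (as in Proposition~\ref{Proposition 5.2}), and let $\overline{\Omega}=\{x\in D\setminus\{0\}\mid x^n\in\Omega\text{ for some }n\in\mathbb{N}\}$. Since $\Omega$ is closed under taking powers, $\overline{\Omega}$ is multiplicatively closed, and $D$ is an AVFD if and only if $\overline{\Omega}=D\setminus\{0\}$.

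For the forward implication I would first verify the almost Schreier property. Given nonzero $x,y,z\in D$ with $x\mid_D yz$, the AVFD hypothesis lets me choose a single $n\in\mathbb{N}$ with $x^n,y^n,z^n\in\Omega$ (take a common multiple of the exponents needed for the nonunits among $x,y,z$ and use that powers of elements of $\Omega$ lie in $\Omega$). Since $x^n\mid_D y^nz^n$, Proposition~\ref{Proposition 5.2} yields $a,b\in\Omega$ with $x^n=ab$, $a\mid_D y^n$ and $b\mid_D z^n$, which is exactly the almost Schreier condition. For the prime condition I pick a nonzero nonunit $a$ in a given nonzero prime $t$-ideal $P$, write a power $a^n$ as a product of valuation elements, and note that $a^n\in P$ forces one of these valuation elements into $P$ by primality.

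The substance of the theorem lies in the converse, and the main obstacle is to show that $\overline{\Omega}$ is saturated. The key lemma I would isolate is: if $v\mid_D u$ with $u\in\Omega$, then $v\in\overline{\Omega}$. I would prove this by induction on the number $s$ of valuation-element factors of $u=c_1\cdots c_s$. Two facts make the induction go through: a power $c^m$ of a valuation element is again a valuation element (the ``in particular'' clause of Proposition~\ref{Proposition 2.9} applied with $*=d$ to the principal ideal $cD$ and a valuation overring), and every divisor of a valuation element is a unit or a valuation element \cite[Proposition 1.1(3)]{cr20}. Writing $v\mid_D c_1(c_2\cdots c_s)$, the almost Schreier property gives $v^m=ab$ with $a\mid_D c_1^m$ and $b\mid_D(c_2\cdots c_s)^m$; then $a\in\Omega$ since it divides the valuation element $c_1^m$, while $b$ divides a product of $s-1$ valuation elements, so $b\in\overline{\Omega}$ by induction, and raising $v^m=ab$ to a suitable power places $v$ in $\overline{\Omega}$. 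As $\overline{\Omega}$ is multiplicatively closed, this lemma shows immediately that $\overline{\Omega}$ is divisor-closed, hence a saturated multiplicatively closed set not containing $0$.

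Finally, assuming $D$ is not an AVFD, I would fix a nonzero nonunit $a\notin\overline{\Omega}$; saturation gives $aD\cap\overline{\Omega}=\emptyset$, and since $aD$ is a $t$-ideal, the family $\Sigma=\{J\mid J\text{ is a }t\text{-ideal of }D,\ a\in J,\ J\cap\overline{\Omega}=\emptyset\}$ is nonempty. Because $t$ is of finite type, directed unions of $t$-ideals are $t$-ideals, so $\Sigma$ is inductively ordered and Zorn's lemma provides a maximal element $P$. A standard argument (as in the proof of Theorem~\ref{Theorem 3.8}, using that $(P+xD)_t$ and $(P+yD)_t$ must meet $\overline{\Omega}$ while $((P+xD)(P+yD))_t\subseteq P$) shows that $P$ is a prime $t$-ideal, and it is nonzero since $a\in P$. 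By hypothesis $P$ contains a valuation element, which lies in $\Omega\subseteq\overline{\Omega}$, contradicting $P\cap\overline{\Omega}=\emptyset$. Hence $\overline{\Omega}=D\setminus\{0\}$ and $D$ is an AVFD.
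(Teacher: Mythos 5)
Your proposal is correct and follows essentially the same route as the paper's proof: the forward direction via Proposition~\ref{Proposition 5.2} is identical, and your converse uses the same set $\overline{\Omega}=\{a\in D\setminus\{0\}\mid a^n\in\Omega\text{ for some }n\in\mathbb{N}\}$ (the paper's $\Sigma$), proves it divisor-closed via iterated applications of the almost Schreier property, and concludes with the same Zorn's lemma argument producing a nonzero prime $t$-ideal disjoint from $\overline{\Omega}$. The only cosmetic difference is in the saturation step: you split off one valuation factor at a time and invoke that powers of valuation elements are valuation elements, whereas the paper splits $b^{nk}=\prod_{i=1}^m w_i$ with $w_j\mid_D v_j^k$ in one inductive stroke and deduces each $w_j$ is a unit or valuation element from $\sqrt{v_jD}\subseteq\sqrt{w_jD}$ and \cite[Proposition 1.1(3)]{cr20} --- both are sound.
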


\begin{proof}
Let $\Omega$ be the set of all finite products of units and valuation elements of $D$.

($\Rightarrow$) Let $D$ be an AVFD. Let $x,y,z\in D$ be nonzero such that $x\mid_D yz$. Since $D$ is an AVFD, there are $r,s,t\in\mathbb{N}$ such that $x^r,y^s,z^t\in\Omega$. Set $n=rst$. Then $n\in\mathbb{N}$, $x^n,y^n,z^n\in\Omega$ and $x^n\mid_D y^nz^n$. We infer by Proposition~\ref{Proposition 5.2} that there are $a,b\in D$ such that $x^n=ab$, $a\mid_D y^n$ and $b\mid_D z^n$. Therefore, $D$ is an almost Schreier domain. Now let $P$ be a nonzero prime $t$-ideal of $D$. Choose a nonzero $a\in P$. Then there exists $n\in\mathbb{N}$ such that $a^n$ is a finite product of valuation elements of $D$. Note that $a^n\in P$. Thus, $P$ contains a valuation element of $D$.

($\Leftarrow$) Let $D$ be an almost Schreier domain for which every nonzero prime $t$-ideal contains a valuation element of $D$. Let $\Sigma=\{a\in D\mid a^n\in\Omega$ for some $n\in\mathbb{N}\}$. Clearly, $\Omega$ and $\Sigma$ are multiplicatively closed subsets of $D$. We show that $\Sigma$ is a divisor-closed subset of $D$. Let $a\in\Sigma$ and $b\in D$ be such that $b\mid_D a$. There are some $n,m\in\mathbb{N}$ and $v_i\in D$ such that $a^n=\prod_{i=1}^m v_i$ and $v_i$ is a unit or a valuation element of $D$ for each $i\in [1,m]$. We have that $b^n\mid_D\prod_{i=1}^m v_i$. Since $D$ is an almost Schreier domain, it follows by induction that there are some $k\in\mathbb{N}$ and $w_i\in D$ such that $b^{nk}=\prod_{i=1}^m w_i$ and $w_j\mid_D v_j^k$ for each $j\in [1,m]$. Let $j\in [1,m]$. If $v_j$ is a unit of $D$, then $w_j$ is a unit of $D$. Now let $v_j$ be a valuation element of $D$. Since $\sqrt{v_jD}\subseteq\sqrt{w_jD}$, we have that $w_j$ is a unit or a valuation element of $D$ by \cite[Proposition 1.1(3)]{cr20}. Consequently, $b^{nk}\in\Omega$, and thus $b\in\Sigma$.

It remains to show that $D\setminus\{0\}\subseteq\Sigma$. Assume that there is some $z\in D\setminus (\Sigma\cup\{0\})$. Since $\Sigma$ is a divisor-closed subset of $D$ (as shown before), we have that $zD\cap\Sigma=\emptyset$. Therefore, there exists a prime $t$-ideal $P$ of $D$ such that $zD\subseteq P$ and $P\cap\Sigma=\emptyset$. Since $z\in P$, we have that $P$ is nonzero, and thus $P$ contains a valuation element of $D$. This implies that $\emptyset\not=P\cap\Omega\subseteq P\cap\Sigma=\emptyset$, a contradiction.
\end{proof}

\begin{corollary}\label{Corollary 5.4}
Let $D$ be an AVFD. Then $D$ is an integrally closed weakly Matlis domain and ${\rm Cl}_t(D)$ is a torsion group.
\end{corollary}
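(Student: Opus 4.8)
The plan is to derive everything from Theorem~\ref{Theorem 5.3}, which already packages the structural content of being an AVFD into two usable facts. Applying Theorem~\ref{Theorem 5.3} to the hypothesis that $D$ is an AVFD, I obtain immediately that (i) $D$ is an almost Schreier domain and (ii) every nonzero prime $t$-ideal of $D$ contains a valuation element of $D$. These two consequences map cleanly onto the two assertions of the corollary, so the proof splits into two independent strands.

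For the torsion statement, I would simply invoke the known fact, recalled in the paragraph preceding Theorem~\ref{Theorem 5.3}, that an almost Schreier domain has torsion $t$-class group \cite[Theorem 3.1]{dk10}. Since (i) gives that $D$ is almost Schreier, this yields at once that ${\rm Cl}_t(D)$ is a torsion group, with no further work required.

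For the ``integrally closed weakly Matlis'' statement, the key observation is that consequence (ii) can be upgraded to exactly the hypothesis of Proposition~\ref{Proposition 3.12}. Indeed, if $a\in D$ is a valuation element, then $aD$ is a nonzero principal ideal, hence invertible and therefore $t$-invertible, and $aD$ is a valuation ideal by the very definition of valuation element. Thus every nonzero prime $t$-ideal of $D$, containing such an $a$, contains the $t$-invertible valuation ideal $aD$. Feeding this into Proposition~\ref{Proposition 3.12} gives that $D$ is an integrally closed weakly Matlis domain, completing the proof.

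There is essentially no hard step here: the corollary is a direct harvest of Theorem~\ref{Theorem 5.3} together with Proposition~\ref{Proposition 3.12} and the cited class-group result. The only point requiring a moment's care is the elementary remark that a valuation element generates a principal --- hence $t$-invertible --- valuation ideal, which is what lets the weaker ``contains a valuation element'' condition feed into the ``contains a $t$-invertible valuation ideal'' hypothesis of Proposition~\ref{Proposition 3.12}.
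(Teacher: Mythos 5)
Your proof is correct and matches the paper's own argument exactly: the paper likewise combines Theorem~\ref{Theorem 5.3} with Proposition~\ref{Proposition 3.12} for the integrally closed weakly Matlis part and cites \cite[Theorem 3.1]{dk10} for the torsion claim. Your explicit bridging remark --- that a valuation element $a$ yields the principal, hence $t$-invertible, valuation ideal $aD$, so that ``contains a valuation element'' implies the hypothesis of Proposition~\ref{Proposition 3.12} --- is precisely the small step the paper leaves implicit in calling the result an ``immediate consequence.''
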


\begin{proof}
It is an immediate consequence of Proposition~\ref{Proposition 3.12} and Theorem~\ref{Theorem 5.3} that $D$ is an integrally closed weakly Matlis almost Schreier domain. We infer by \cite[Theorem 3.1]{dk10} that ${\rm Cl}_t(D)$ is a torsion group.
\end{proof}

Recall that (i) VFDs are AVFDs by definition and (ii) $D$ is a VFD if and only if $D$ is a VIFD with ${\rm Pic}(D)=\{0\}$, if and only if $D$ is a $w$-VIFD with ${\rm Cl}_t(D)=\{0\}$ by Corollary~\ref{Corollary 3.6}. The next result also shows that a VIFD $D$ for which ${\rm Pic}(D)$ is a torsion group is an AVFD.

\begin{proposition}\label{Proposition 5.5}
Let $D$ be an integral domain and let $*$ be a star operation of finite type on $D$ such that $D$ is a $*$-VIFD and ${\rm Cl}_*(D)$ is a torsion group. Then $D$ is an AVFD.
\end{proposition}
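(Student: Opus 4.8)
The plan is to fix a nonzero nonunit $a\in D$ and to produce an exponent $N\in\mathbb{N}$ for which $a^N$ is a finite product of valuation elements. First I would invoke Proposition~\ref{Proposition 3.3} to write $aD=(\prod_{i=1}^n I_i)_*$ with $*$-invertible valuation $*$-ideals $I_i$, discarding any trivial factors so that, without restriction, each $I_i$ is proper (so $n\geq 1$). The guiding idea is that the torsion hypothesis on ${\rm Cl}_*(D)$ lets me replace each $I_i$ by an honest \emph{principal} valuation ideal after raising to a suitable power.

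Concretely, since ${\rm Cl}_*(D)$ is a torsion group, for each $i\in[1,n]$ there is some $m_i\in\mathbb{N}$ such that $(I_i^{m_i})_*$ is principal, say $(I_i^{m_i})_*=c_iD$ for some nonzero $c_i\in D$. The crucial point is that powers of valuation ideals are again valuation ideals: because $I_i$ is a valuation ideal, the ``in particular'' statement of Proposition~\ref{Proposition 2.9} gives that $(I_i^{m_i})_*$ is a valuation ideal. Hence $c_iD$ is a proper valuation ideal, and therefore $c_i$ is a valuation element of $D$.

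Now set $N=\prod_{i=1}^n m_i$. Since $m_i$ divides $N$ for each $i$, standard $*$-operation manipulations (Lemma~\ref{Lemma 2.1}) give $(I_i^N)_*=c_i^{N/m_i}D$, which is again a principal valuation ideal by the same application of Proposition~\ref{Proposition 2.9}; thus each $c_i^{N/m_i}$ is a valuation element. Passing to the $N$-th power inside the group $T_*(D)$ of $*$-invertible $*$-ideals, and using that $aD$ equals the $*$-product of the $I_i$ while a $*$-product of principal ideals is principal, I would compute
\[
a^N D=((aD)^N)_*=\Big(\prod_{i=1}^n (I_i^N)_*\Big)_*=\Big(\prod_{i=1}^n c_i^{N/m_i}\Big)D.
\]
Comparing generators yields $a^N=u\prod_{i=1}^n c_i^{N/m_i}$ for some unit $u\in D^{\times}$. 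Absorbing $u$ into the first factor (a unit times a valuation element is again a valuation element) exhibits $a^N$ as a finite product of valuation elements, so $D$ is an AVFD.

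I expect the main obstacle to be purely bookkeeping rather than conceptual: correctly identifying $aD$ with the $*$-product of the $I_i$ in $T_*(D)$ and justifying the displayed chain of equalities. The subtle spot is that $(I_i^N)_*$ must be recognized \emph{simultaneously} as principal (from the torsion hypothesis) and as a valuation ideal (from Proposition~\ref{Proposition 2.9}); the torsion hypothesis alone only supplies the exponents $m_i$. The final remark about absorbing the unit $u$ is small but necessary, since it is what upgrades ``$a^N D$ is a product of principal valuation ideals'' to ``$a^N$ is a product of valuation elements.''
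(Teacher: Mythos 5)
Your proof is correct and takes essentially the same route as the paper's: decompose $aD$ via Proposition~\ref{Proposition 3.3} into $*$-invertible valuation $*$-ideals, use torsion of ${\rm Cl}_*(D)$ to make a power of each factor principal, and invoke Proposition~\ref{Proposition 2.9} to recognize the generators as valuation elements. The only differences are cosmetic bookkeeping: the paper chooses a single common exponent $m$ outright instead of your individual $m_i$ followed by $N=\prod_{i=1}^n m_i$, and it leaves the comparison of generators and the absorption of the unit implicit.
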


\begin{proof}
Let $a\in D$ be a nonzero nonunit. Then $aD=(\prod_{i=1}^n I_i)_*$ for some $n\in\mathbb{N}$ and proper valuation $*$-ideals $I_i$ of $D$ by Proposition~\ref{Proposition 3.3}. Clearly, each $I_i$ is $*$-invertible, so by assumption that ${\rm Cl}_*(D)$ is a torsion group, there is an $m\in\mathbb{N}$ such that for each $i\in [1,n]$, there is some $a_i\in D$ such that $(I_i^m)_*=a_iD$. Then each $a_i$ is a valuation element by Proposition~\ref{Proposition 2.9}. Thus, $a^m$ can be written as a finite product of valuation elements.
\end{proof}

We say that $D$ is an {\em almost GCD domain} (AGCD domain) if for each $a,b\in D$, there is an $n\in\mathbb{N}$ such that $a^nD\cap b^nD$ is principal \cite{z85}. It is known that an integrally closed domain $D$ is an AGCD domain if and only if $D$ is a P$v$MD such that ${\rm Cl}_t(D)$ is a torsion group \cite[Corollary 3.8 and Theorem 3.9]{z85} and an AGCD domain is an almost Schreier domain \cite[Proposition 2.2]{dk10}.

\begin{corollary}\label{Corollary 5.6}
Let $D$ be a $t$-treed domain. The following statements are equivalent\textnormal{:}
\begin{enumerate}
\item $D$ is an AVFD.
\item $D$ is an integrally closed weakly Matlis AGCD domain.
\item $D$ is an independent ring of Krull type such that ${\rm Cl}_t(D)$ is a torsion group.
\item ${\rm Cl}_t(D)$ is a torsion group and each nonzero prime ideal of $D$ contains a valuation element.
\item $D$ is a $t$-VIFD such that ${\rm Cl}_t(D)$ is a torsion group.
\item $D[X]$ is an AVFD.
\end{enumerate}
\end{corollary}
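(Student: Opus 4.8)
The plan is to prove the equivalences by first running a short cycle through the purely $D$-theoretic conditions $(1)$–$(5)$, and then attaching the polynomial condition $(6)$ via the behaviour of independent rings of Krull type and of $t$-class groups under the extension $D\subseteq D[X]$. Throughout I would use freely that $D$ is $t$-treed, so that Theorem~\ref{Theorem 4.5} applies to $D$, and I would reduce $(2)$ and $(3)$ to one another by merely unwinding definitions.

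For the core cycle I would argue $(1)\Rightarrow(4)\Rightarrow(5)\Rightarrow(1)$. For $(1)\Rightarrow(4)$, Corollary~\ref{Corollary 5.4} gives that ${\rm Cl}_t(D)$ is torsion, while Theorem~\ref{Theorem 5.3} places a valuation element inside every nonzero prime $t$-ideal; to upgrade this to \emph{every} nonzero prime ideal $P$, I would choose $0\neq a\in P$, take a minimal prime $Q$ of the principal (hence $t$-) ideal $aD$ with $Q\subseteq P$, note that $Q$ is a prime $t$-ideal (a minimal prime over a $t$-ideal is a $t$-ideal), and transport the valuation element from $Q$ up into $P$. For $(4)\Rightarrow(5)$, a valuation element $c$ makes $cD$ a principal, hence $t$-invertible, valuation ideal, so every nonzero prime ideal contains a $t$-invertible valuation ideal; Theorem~\ref{Theorem 4.5} (condition $(8)\Rightarrow(2)$ there) then yields that $D$ is a $t$-VIFD, and ${\rm Cl}_t(D)$ stays torsion. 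Finally $(5)\Rightarrow(1)$ is exactly Proposition~\ref{Proposition 5.5} with $*=t$.

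To fold in $(2)$ and $(3)$ I would invoke Theorem~\ref{Theorem 4.5} once more: for the $t$-treed domain $D$, being a $t$-VIFD is equivalent to being an independent ring of Krull type, so $(5)\Leftrightarrow(3)$ after carrying along the torsion hypothesis on ${\rm Cl}_t(D)$. For $(2)\Leftrightarrow(3)$ I would only unwind definitions: an independent ring of Krull type is by definition a weakly Matlis P$v$MD, and an integrally closed domain is an AGCD domain precisely when it is a P$v$MD with torsion $t$-class group \cite{z85}; since P$v$MDs are integrally closed, conditions $(2)$ and $(3)$ both reduce to ``weakly Matlis P$v$MD with torsion $t$-class group''.

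The delicate point is $(6)$. For $(3)\Rightarrow(6)$ the route is clean: $D$ an independent ring of Krull type forces $D[X]$ to be an independent ring of Krull type by Theorem~\ref{Theorem 4.5} ($(3)\Leftrightarrow(4)$), in particular $D[X]$ is a P$v$MD and hence $t$-treed; using the isomorphism ${\rm Cl}_t(D)\cong{\rm Cl}_t(D[X])$ valid for P$v$MDs, ${\rm Cl}_t(D[X])$ is again torsion, so $D[X]$ satisfies $(3)$ and the already-proved $(3)\Rightarrow(1)$, applied to $D[X]$, gives that $D[X]$ is an AVFD. The hard part will be the converse $(6)\Rightarrow(3)$, where I cannot assume $D[X]$ is $t$-treed a priori: from $D[X]$ an AVFD, Corollary~\ref{Corollary 5.4} only yields that $D[X]$ is integrally closed (so $D$ is integrally closed), weakly Matlis, and has torsion $t$-class group. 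The main obstacle is thus to promote this to the P$v$MD structure. My intended approach is to exploit that valuation elements are homogeneous (Corollary~\ref{Corollary 2.6}) together with Theorem~\ref{Theorem 5.3} applied to $D[X]$: a valuation element lying in an upper to zero of $D[X]$ should force that upper to zero to be a maximal $t$-ideal, giving that $D$ is a UMT-domain, whereupon integrally closed plus UMT equals P$v$MD. Once $D[X]$ (equivalently $D$) is known to be a P$v$MD, I would descend via Theorem~\ref{Theorem 4.5} ($(4)\Rightarrow(3)$) and the $t$-class group isomorphism to recover $(3)$ for $D$. I expect the bookkeeping around uppers to zero and the transfer of the torsion condition back to $D$ to be where the genuine work lies.
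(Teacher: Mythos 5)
Your handling of (1)--(5) and of (3) $\Rightarrow$ (6) is sound and runs essentially parallel to the paper: the cycle (1) $\Rightarrow$ (4) $\Rightarrow$ (5)/(3) $\Rightarrow$ (1) uses the same ingredients (Corollary~\ref{Corollary 5.4}, Theorem~\ref{Theorem 5.3}, Theorem~\ref{Theorem 4.5}, Proposition~\ref{Proposition 5.5}, and \cite{z85} for (2) $\Leftrightarrow$ (3)), and your upgrade from prime $t$-ideals to arbitrary nonzero primes via a minimal prime of $aD$ is correct (the paper gets it even more cheaply: for $a\in P$ some power $a^n$ is a product of valuation elements and $P$ is prime, so one factor lies in $P$). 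The genuine gap is in (6) $\Rightarrow$ (3). Your key claim --- that a valuation element $f$ lying in an upper to zero $Q$ of $D[X]$ forces $Q$ to be a maximal $t$-ideal, whence $D$ is a UMT-domain --- is not delivered by homogeneity. Corollary~\ref{Corollary 2.6} only says that $f$ lies in a \emph{unique} maximal $t$-ideal $M$ of $D[X]$; if $Q$ fails to be a maximal $t$-ideal, then $Q\subsetneq P[X]$ for some maximal $t$-ideal $P$ of $D$, and the unique maximal $t$-ideal containing $f$ can simply be this $P[X]$ --- no contradiction arises. So the UMT-ness of $D$, and with it the entire promotion of $D[X]$ to a P$v$MD, is left unproved, exactly at the point you flagged as the genuine work. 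It is telling that the paper's own Corollary~\ref{Corollary 5.7} retains ``every upper to zero in $D[X]$ contains a valuation element of $D[X]$'' as an explicit condition rather than replacing it by ``$D$ is a UMT-domain''; the implication you need is not available by these means.

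The fix is much simpler and avoids the structure theory of $D[X]$ altogether: prove (6) $\Rightarrow$ (1) directly, as the paper does. If $a\in D$ is a nonzero nonunit, write $a^k=\prod_{i=1}^n v_i$ with each $v_i$ a valuation element of $D[X]$; comparing degrees, every $v_i$ lies in $D$, and a constant of $D$ that is a valuation element of $D[X]$ is a valuation element of $D$ by \cite[Lemma 2.5]{cr20}. Hence $D$ is an AVFD, and (6) $\Rightarrow$ (3) then follows from your already-established implications (1) $\Rightarrow$ (4) $\Rightarrow$ (3) applied to the $t$-treed domain $D$ --- note that $t$-treedness is a hypothesis on $D$ only, so nothing about $\operatorname{Spec}$ of $D[X]$ is needed. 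With this replacement your argument closes; everything else in your proposal stands.
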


\begin{proof}
(1) $\Rightarrow$ (4) It is clear that each nonzero prime ideal of $D$ contains a valuation element. Moreover, ${\rm Cl}_t(D)$ is a torsion group by Corollary~\ref{Corollary 5.4}.

(2) $\Leftrightarrow$ (3) See, for example, \cite[Corollary 3.8 and Theorem 3.9]{z85}.

(3) $\Rightarrow$ (1) Note that $D$ is a $t$-VIFD by Theorem~\ref{Theorem 4.5}. Thus, by Proposition~\ref{Proposition 5.5}, $D$ is an AVFD.

(3) $\Rightarrow$ (6) Observe that $D$ is a P$v$MD with ${\rm Cl}_t(D)$ torsion. Hence, ${\rm Cl}_t(D[X])$ is a torsion group \cite[Theorem 5.6]{z85}. Moreover, $D[X]$ is a $t$-VIFD by Theorem~\ref{Theorem 4.5}. Thus, $D[X]$ is an AVFD by Proposition~\ref{Proposition 5.5}.

(4) $\Rightarrow$ (3), (5) This follows from Theorem~\ref{Theorem 4.5}.

(5) $\Rightarrow$ (1) This is an immediate consequence of Proposition~\ref{Proposition 5.5}.

(6) $\Rightarrow$ (1) Let $D[X]$ be an AVFD. Let $a$ be a nonzero nonunit of $D$. Then $a$ is a nonzero nonunit of $D[X]$, and hence there are some $k,n\in\mathbb{N}$ and valuation elements $v_i$ of $D[X]$ such that $a^k=\prod_{i=1}^n v_i$. Observe that $v_i\in D$ for each $i\in [1,n]$, and hence $v_i$ is a valuation element of $D$ for each $i\in [1,n]$ by \cite[Lemma 2.5]{cr20}. Therefore, $D$ is an AVFD.
\end{proof}

\begin{corollary}\label{Corollary 5.7}
Let $D$ be an integral domain and let $D[X]$ be the polynomial ring over $D$. Then $D[X]$ is an AVFD if and only if $D$ is an AVFD and every upper to zero in $D[X]$ contains a valuation element of $D[X]$.
\end{corollary}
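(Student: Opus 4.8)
The plan is to reduce everything to the characterization of AVFDs in Theorem~\ref{Theorem 5.3}, which says that an integral domain is an AVFD precisely when it is an almost Schreier domain and every nonzero prime $t$-ideal contains a valuation element. Applying this criterion to both $D$ and $D[X]$ turns the corollary into a handful of transfer statements about valuation elements and uppers to zero.

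For the forward implication I would assume $D[X]$ is an AVFD. That $D$ is an AVFD follows exactly as in the proof of Corollary~\ref{Corollary 5.6}(6)$\Rightarrow$(1): a nonzero nonunit $a\in D$ is a nonzero nonunit of $D[X]$, so some power $a^k$ factors into valuation elements of $D[X]$; since these factors divide $a^k$ they lie in $D$, and hence are valuation elements of $D$ by \cite[Lemma 2.5]{cr20}. For the statement about uppers to zero, I would observe that an upper to zero $Q$ in $D[X]$ has height one (its extension to $K[X]$ is a nonzero prime of a PID, and since every prime contained in $Q$ also contracts to $(0)$ in $D$, no prime lies strictly between $(0)$ and $Q$ in $D[X]$), so $Q$ is a $t$-ideal. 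As $D[X]$ is an AVFD, Theorem~\ref{Theorem 5.3} guarantees that the nonzero prime $t$-ideal $Q$ contains a valuation element of $D[X]$.

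For the converse I would assume $D$ is an AVFD and that every upper to zero in $D[X]$ contains a valuation element of $D[X]$, and then verify the two conditions of Theorem~\ref{Theorem 5.3} for $D[X]$. The almost Schreier condition is essentially a citation: $D$ is integrally closed by Corollary~\ref{Corollary 5.4} and almost Schreier by Theorem~\ref{Theorem 5.3}, so $D[X]$ is almost Schreier by \cite[Theorem 4.4]{dk10}. It then remains to show that every nonzero prime $t$-ideal $Q$ of $D[X]$ contains a valuation element of $D[X]$. If $Q\cap D=(0)$, then $Q$ is an upper to zero and the hypothesis applies directly. If $P:=Q\cap D\neq(0)$, I would pick a nonzero $a\in P$ and use that $D$ is an AVFD to write $a^n=\prod_{i=1}^m v_i$ with each $v_i$ a valuation element of $D$; since $a^n\in P$ and $P$ is prime, some $v_i\in P\subseteq Q$, and that $v_i$ is a valuation element of $D[X]$ by \cite[Lemma 2.5]{cr20}. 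Thus $D[X]$ satisfies both conditions and is an AVFD by Theorem~\ref{Theorem 5.3}.

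The routine parts are the two invocations of Theorem~\ref{Theorem 5.3} and the almost-Schreier transfer. The one point requiring genuine care is the transfer of valuation elements between $D$ and $D[X]$: I am relying on \cite[Lemma 2.5]{cr20}, that a nonzero nonunit of $D$ is a valuation element of $D$ if and only if it is a valuation element of $D[X]$. Should only the contraction direction be available there, the extension direction can be supplied directly: for a valuation overring $V$ of $D$ with $cV\cap D=cD$, the Nagata ring $V(X)$ is a valuation overring of $D[X]$, and computing with the content valuation $v(f)=\min_i v(a_i)$ yields $cV(X)\cap D[X]=cD[X]$, so $c$ is a valuation element of $D[X]$. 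This transfer, together with the height-one observation for uppers to zero, is the crux that makes the two-case analysis of the prime $t$-ideals of $D[X]$ go through.
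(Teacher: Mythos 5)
Your proof is correct and follows essentially the same route as the paper's: both directions reduce to Theorem~\ref{Theorem 5.3}, with the forward direction handled via the Corollary~\ref{Corollary 5.6}(6)$\Rightarrow$(1) argument, the almost-Schreier transfer via Corollary~\ref{Corollary 5.4} and \cite[Theorem 4.4]{dk10}, and the case split on $Q\cap D$ with \cite[Lemma 2.5]{cr20} moving valuation elements between $D$ and $D[X]$. Your two added justifications --- that an upper to zero is a height-one prime and hence a $t$-ideal, and the Nagata-ring computation $cV(X)\cap D[X]=cD[X]$ backing up the extension direction of Lemma 2.5 --- are correct details the paper leaves implicit.
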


\begin{proof} ($\Rightarrow$) Let $D[X]$ be an AVFD. Then $D$ is an AVFD by Corollary~\ref{Corollary 5.6} and every upper to zero in $D[X]$ contains a valuation element of $D[X]$ by Theorem~\ref{Theorem 5.3}.

($\Leftarrow$) Now let $D$ be an AVFD such that every upper to zero in $D[X]$ contains a valuation element of $D[X]$. It follows by Theorem~\ref{Theorem 5.3} and Corollary~\ref{Corollary 5.4} that $D$ is an integrally closed almost Schreier domain and each nonzero prime ($t$-)ideal of $D$ contains a valuation element of $D$. We infer by \cite[Theorem 4.4]{dk10} that $D[X]$ is an almost Schreier domain. Now let $Q$ be a nonzero prime $t$-ideal of $D[X]$. If $Q\cap D=(0)$, then $Q$ contains a valuation element of $D[X]$ by assumption. Now let $Q\cap D\not=(0)$ and set $P=Q\cap D$. Then $P$ is a nonzero prime ideal of $D$, and thus $P$ contains a valuation element of $D$. Consequently, $Q$ contains a valuation element of $D$. It follows from \cite[Lemma 2.5]{cr20} that $Q$ contains a valuation element of $D[X]$. Therefore, $D[X]$ is an AVFD by Theorem~\ref{Theorem 5.3}.
\end{proof}

\begin{example}\label{Example 5.8}
{\em Let $L$ be an algebraic number field and let $D$ be an order in $L$. Then $D$ is an almost Schreier domain \cite[Remark 6.4]{dk10} and if $D$ is the principal order in $L$ $($i.e., $D$ is integrally closed$\,)$, then $D$ is an AVFD by Corollary~\ref{Corollary 5.6}.}
\end{example}

It is known that an atomic VFD is a UFD \cite[Corollary 2.4]{cr20}, and hence it satisfies the principal ideal theorem. The next result shows that this is true for AVFDs.

\begin{proposition}\label{Proposition 5.9}
Let $D$ be an atomic domain\textnormal{:}
\begin{enumerate}
\item If $D$ is an AVFD, then $D$ satisfies the principal ideal theorem.
\item If $D$ is a $t$-VIFD such that ${\rm Cl}_t(D)$ is a torsion group, then $D$ is a generalized Krull domain.
\end{enumerate}
\end{proposition}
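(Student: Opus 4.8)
The plan is to put all the real content into part (1) and then derive part (2) formally. Granting (1), suppose $D$ is an atomic $t$-VIFD with ${\rm Cl}_t(D)$ a torsion group. Proposition~\ref{Proposition 5.5} (applied with $*=t$) shows that $D$ is an AVFD, hence an atomic AVFD, so by (1) it satisfies the Principal Ideal Theorem; then the implication (3)$\Rightarrow$(1) of Theorem~\ref{Theorem 4.8} shows that a $t$-VIFD satisfying the Principal Ideal Theorem is a generalized Krull domain. Thus part (2) is immediate once (1) is in hand, and I would concentrate all effort on (1).

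For part (1) the crux is the following lemma: \emph{in an atomic domain, $\bigcap_{n\in\mathbb{N}} v^nD=(0)$ for every valuation element $v$.} To prove it I would fix a valuation overring $V$ with $vV\cap D=vD$ and its valuation $\mathrm{val}$; since $v$ is a nonunit of $V$ (otherwise $vV=V$ gives $vV\cap D=D\ne vD$), we have $\gamma:=\mathrm{val}(v)>0$. By the final ``in particular'' clause of Proposition~\ref{Proposition 2.9} (with $I=vD$, $R=V$, $*=d$), $v^nV\cap D=v^nD$ for all $n$, so for $x\in D$ one has $x\in v^nD$ exactly when $\mathrm{val}(x)\ge n\gamma$. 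If $0\ne w\in\bigcap_n v^nD$, then $w$ is a nonzero nonunit (it lies in $vD\subsetneq D$) with $\mathrm{val}(w)\ge n\gamma$ for every $n$; writing $w=u_1\cdots u_p$ as a product of atoms, I would observe that $\mathrm{val}(u_i)\ge\gamma$ forces $v\mid_D u_i$, and since $u_i$ is an atom and $v$ a nonunit this makes $u_i$ and $v$ associates, whence $\mathrm{val}(u_i)=\gamma$. Therefore $\mathrm{val}(u_i)\le\gamma$ for every $i$ and $\mathrm{val}(w)=\sum_i\mathrm{val}(u_i)\le p\gamma$, contradicting $\mathrm{val}(w)\ge(p+1)\gamma>p\gamma$.

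With the lemma available, I would prove (1) directly. Let $P$ be minimal over a principal ideal $bD$ with $b$ a nonzero nonunit; since $D$ is an AVFD, some power $b^n$ is a product of valuation elements, and as $P$ is prime and contains $b^n$ it contains one of these factors, say $v$. Then $\sqrt{vD}$ is a prime ideal by Proposition~\ref{Proposition 2.3}(2), and $bD\subseteq\sqrt{vD}\subseteq P$, so minimality of $P$ forces $P=\sqrt{vD}$. If $\mathrm{ht}(P)\ge 2$, choose a prime $(0)\ne Q'\subsetneq P$ and a nonzero $x\in Q'$; writing a power of $x$ as a product of valuation elements and using that $Q'$ is prime produces a valuation element $w$ with $\sqrt{wD}\subseteq Q'\subsetneq P=\sqrt{vD}$. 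Corollary~\ref{Corollary 2.10}(4) (with $I=wD$, $J=vD$) then gives $w\in wD\subseteq\bigcap_n v^nD$, which the lemma forces to be $(0)$ -- impossible since $w\ne 0$. Hence $\mathrm{ht}(P)\le 1$, i.e. $D$ satisfies the Principal Ideal Theorem.

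The main obstacle is precisely this lemma, and the subtle point is that mere atomicity, rather than the stronger ACCP, must suffice: for a general valuation element one genuinely can have $\bigcap_n v^nD\ne(0)$, which is the higher-rank valuation phenomenon witnessed by a nonzero element inside the height-one prime of a two-dimensional valuation domain. A naive ascending-chain argument on the $w/v^n$ would need ACCP, which atomic domains may lack, so instead I would transport divisibility by $v$ into the totally ordered value group of $V$ via Proposition~\ref{Proposition 2.9} and then exploit that a fixed atomic factorization of $w$ has only finitely many factors, each of valuation at most $\gamma$; this converts ``infinite divisibility'' into the impossible bound $\mathrm{val}(w)\le p\gamma$. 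Everything else -- the reduction $P=\sqrt{vD}$ and the passage through $Q'$ -- is routine bookkeeping with radicals once Corollary~\ref{Corollary 2.10}(4) and Proposition~\ref{Proposition 2.3}(2) are invoked.
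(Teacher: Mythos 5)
Your proposal is correct, and it splits cleanly: part (2) coincides with the paper's own derivation (Proposition~\ref{Proposition 5.5}, then part (1), then the implication (3)$\Rightarrow$(1) of Theorem~\ref{Theorem 4.8}), while your part (1) takes a genuinely different route. The paper never forms $\bigcap_{n\in\mathbb{N}}v^nD$: after reducing to $P=\sqrt{vD}$ and locating a valuation element $a$ in a nonzero prime $Q\subsetneq P$ exactly as you do, it spends atomicity at that point --- factoring $a$ into atoms and extracting an atom $u\in Q$ with $aD\subseteq uD$, noting that $u$ is again a valuation element by \cite[Proposition 1.1(2)]{cr20} (the principal case of Lemma~\ref{Lemma 2.8}(2)), and then using comparability of valuation elements (\cite[Corollary 1.2(1)]{cr20}, the $d$-case of Corollary~\ref{Corollary 2.10}(1)--(2)) together with $\sqrt{uD}\subseteq Q\subsetneq P=\sqrt{vD}$ to force $uD\subsetneq vD\subsetneq D$, so that $u$ is a nonunit multiple of the nonunit $v$, contradicting that $u$ is an atom. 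You instead isolate the lemma that $\bigcap_{n\in\mathbb{N}}v^nD=(0)$ for every valuation element $v$ of an atomic domain, proved by transporting divisibility into the value group of a valuation overring $V$ with $vV\cap D=vD$; your steps check out: the ``in particular'' clause of Proposition~\ref{Proposition 2.9} (with $*=d$, $I=vD$, $R=V$) does give $v^nV\cap D=v^nD$, the dichotomy $\mathrm{val}(u_i)<\gamma$ or $u_i$ associate to $v$ is sound, and the bound $\mathrm{val}(w)\le p\gamma$ against $\mathrm{val}(w)\ge (p+1)\gamma$ is valid in any totally ordered value group. The contradiction via Corollary~\ref{Corollary 2.10}(4) is then immediate. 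Both arguments correctly use atomicity rather than ACCP, which, as you observe, is the essential subtlety; the paper's atom-replacement trick is shorter and stays purely ideal-theoretic, whereas your route costs one extra valuation-theoretic lemma but yields a reusable statement of independent interest --- in effect a principal-ideal, atomicity-only $d$-analogue of the condition $\bigcap_{n\in\mathbb{N}}(I^n)_t=(0)$ that appears in Theorem~\ref{Theorem 4.8}(5).
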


\begin{proof}
(1) Let $D$ be an AVFD, let $x\in D$ be nonzero and let $P\in\mathcal{P}(xD)$. There exists an $n\in\mathbb{N}$ such that $x^n$ is a finite product of valuation elements of $D$. Note that $P\in\mathcal{P}(x^nD)$, and hence $P\in\mathcal{P}(vD)$ for some valuation element $v\in D$. This implies that $P=\sqrt{vD}$ by Proposition~\ref{Proposition 2.3}(2). Assume that $P$ is not a height-one prime ideal of $D$. Then there is a nonzero prime ideal $Q$ of $D$ such that $Q\subsetneq P$. Note that $Q$ contains a valuation element $a\in D$. Clearly, there is an atom $u\in Q$ such that $aD\subseteq uD$. We infer by \cite[Proposition 1.1(2)]{cr20} that $u$ is a valuation element of $D$. Moreover, $\sqrt{uD}\subseteq Q\subsetneq P=\sqrt{vD}$, and hence $uD\subsetneq vD\subsetneq D$ \cite[Corollary 1.2(1)]{cr20}. This contradicts the fact that $u$ is an atom of $D$.

(2) Let $D$ be a $t$-VIFD such that ${\rm Cl}_t(D)$ is a torsion group. Then $D$ is an AVFD by Proposition~\ref{Proposition 5.5}. Consequently, $D$ satisfies the principal ideal theorem by (1), and hence $D$ is a generalized Krull domain by Theorem~\ref{Theorem 4.8}.
\end{proof}

The {\em exponent} of a group $G$, denoted ${\rm exp}(G)$, is defined by $\inf\{k\in\mathbb{N}\mid x^k=1$ for all $x\in G\}$. We proceed with providing a partial generalization of \cite[Corollary 2.4]{cr20} that characterizes when a VFD is atomic. Let $\mathbb{N}_{\geq m}=\{x\in\mathbb{N}\mid x\geq m\}$ for each $m\in\mathbb{N}$.

\begin{theorem}\label{Theorem 5.10}
Let $D$ be an integral domain such that ${\rm exp}({\rm Cl}_t(D))$ is finite. Then $D$ is a Krull domain if and only if $D$ is an atomic $t$-VIFD, and in this case, $D$ is an AVFD.
\end{theorem}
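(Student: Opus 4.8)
The plan is to treat the two implications of the stated equivalence separately and then deduce the AVFD clause at the end. The implication ``Krull $\Rightarrow$ atomic $t$-VIFD'' is immediate: a Krull domain is a $t$-VIFD by Corollary~\ref{Corollary 3.7}(1), and it is atomic because it is a Mori domain. For the final clause, note that once $D$ is known to be a $t$-VIFD, the hypothesis that ${\rm exp}({\rm Cl}_t(D))$ is finite forces ${\rm Cl}_t(D)$ to be a torsion group, so Proposition~\ref{Proposition 5.5} (applied with $*=t$) shows $D$ is an AVFD. Hence everything reduces to the converse: an atomic $t$-VIFD $D$ with ${\rm exp}({\rm Cl}_t(D))$ finite is Krull.

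To set this up, I would first apply Proposition~\ref{Proposition 5.9}(2): since ${\rm Cl}_t(D)$ is torsion, the atomic $t$-VIFD $D$ is a generalized Krull domain, i.e., a P$v$MD of finite $t$-character with $t$-$\dim(D)\le 1$. Assuming $D$ is not a field, we have $t$-$\dim(D)=1$, so by Remark~\ref{Remark 3.14}(1) the hypotheses of Proposition~\ref{Proposition 3.18} hold with $*=t$, and atomicity yields the bound in Proposition~\ref{Proposition 3.18}(1). By Corollary~\ref{Corollary 3.11} it then suffices to prove that $D_M$ is a UFD for each $M\in t$-${\rm Max}(D)$. Each such $D_M$ is a rank-one valuation domain with valuation ${\rm v}_M$ and value group $G_M={\rm v}_M(K\setminus\{0\})\subseteq\mathbb{R}$, so the goal becomes: every $G_M$ is discrete (equivalently cyclic), which makes $D_M$ a DVR.

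The main obstacle is to produce, for each positive $g\in G_M$, a $t$-invertible valuation $t$-ideal concentrated at $M$ with prescribed value $g$. I would realize these as $M^{(g)}:=\{y\in D\mid {\rm v}_M(y)\ge g\}$. First, ${\rm v}_M(D\setminus\{0\})=\{g\in G_M\mid g\ge 0\}$: if ${\rm v}_M(a)=g\ge 0$ then $a\in D_M$, so $a=d/s$ with $s\notin M$ and ${\rm v}_M(d)=g$, giving $d\in D$. Choosing $x\in D$ with ${\rm v}_M(x)=g$ and factoring $xD$ through the $t$-VIFD property in the normalized form of Proposition~\ref{Proposition 2.11}, the unique factor $I_1$ with $\sqrt{I_1}=M$ is $t$-invertible; localizing at every maximal $t$-ideal identifies $I_1=M^{(g)}$ and shows $(M^{(g)})_{M'}=D_{M'}$ for $M'\ne M$. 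This is the crucial point where the $t$-VIFD hypothesis delivers $t$-invertibility directly, sparing a separate finiteness argument. A short computation then gives $((M^{(g)})^N)_t=M^{(Ng)}$ for $N={\rm exp}({\rm Cl}_t(D))$.

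Finally I would run the contradiction. Because $M^{(g)}$ is $t$-invertible and ${\rm exp}({\rm Cl}_t(D))=N$, the ideal $M^{(Ng)}=((M^{(g)})^N)_t$ is principal, say $M^{(Ng)}=c_gD$; here $c_g$ is a nonzero nonunit with $||c_g||=Ng$ supported only at $M$. Suppose $G_M$ is not discrete; then it is dense in $\mathbb{R}$. Fix $g_0>0$ in $G_M$ and let $r\in\mathbb{R}_{>0}$ be the radius that Proposition~\ref{Proposition 3.18}(1) attaches to $c_{g_0}$. By density, choose $g\in G_M$ with $0<g\le g_0$ and $Ng<r$. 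Then $c_gD=M^{(Ng)}\supseteq M^{(Ng_0)}=c_{g_0}D$, so $c_g\mid_D c_{g_0}$, while $||c_g||=Ng<r$ contradicts Proposition~\ref{Proposition 3.18}(1). Thus every $G_M$ is discrete, each $D_M$ is a DVR, and $D$ is Krull by Corollary~\ref{Corollary 3.11}. I expect the finite-exponent hypothesis to be indispensable precisely here: a single $N$ must work for all $g$, so that $Ng\to 0$ as $g\to 0$; mere torsion of ${\rm Cl}_t(D)$ would allow the exponent to blow up and would not force discreteness.
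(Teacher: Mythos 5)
Your proposal is correct, but your argument for the core implication (atomic $t$-VIFD with ${\rm exp}({\rm Cl}_t(D))$ finite implies Krull) takes a genuinely different route from the paper's. Both proofs begin the same way, reducing via Proposition~\ref{Proposition 5.9}(2) to a generalized Krull domain, and both hinge on Proposition~\ref{Proposition 3.18} --- but on different clauses of it. The paper uses (4) $\Rightarrow$ (3) to obtain the ACCP and then argues globally: for an ascending chain $(I_i)_i$ of $t$-invertible $t$-ideals, the principal ideals $(I_i^n)_t$ (with $n={\rm exp}({\rm Cl}_t(D))$) stabilize by the ACCP, and this stabilization descends to the $I_i$ themselves by comparing local generators $a,b$ with $(\frac{a}{b})^n$ a unit in the integrally closed $D_M$; a Zorn-type maximality argument together with the P$v$MD property then shows every nonzero $t$-ideal is $t$-invertible, which characterizes Krull domains. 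You instead work locally through Corollary~\ref{Corollary 3.11}: for each $M\in t$-${\rm Max}(D)$ and each positive $g$ in the value group $G_M$, you extract the $t$-invertible $t$-ideal $M^{(g)}$ as the $M$-component of the factorization of $xD$ via Proposition~\ref{Proposition 2.11} and Corollary~\ref{Corollary 2.4} (this identification is sound, since the $M$-component is the unique factor not localizing trivially away from $M$, and $t$-invertible $t$-ideals are determined by their localizations at maximal $t$-ideals), use the exponent $N$ to make $((M^{(g)})^N)_t=M^{(Ng)}$ principal with generator of norm $Ng$, and then contradict the uniform lower bound of Proposition~\ref{Proposition 3.18}(1) (via (4) $\Rightarrow$ (1)) when $G_M$ is dense in $\mathbb{R}$. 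The paper's route yields an intermediate statement of independent interest (the ACC on $t$-invertible $t$-ideals) and avoids any value-group analysis; your route localizes more sharply where the finite-exponent hypothesis enters --- a single $N$ must force $Ng\to 0$ as $g\to 0$ --- which, as you observe, explains conceptually why mere torsion of ${\rm Cl}_t(D)$ cannot suffice, consistent with Example~\ref{Example 5.11}.
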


\begin{proof}
Clearly, every Krull domain is an atomic $t$-VIFD. Conversely, suppose that $D$ is an atomic $t$-VIFD. It follows from Proposition~\ref{Proposition 5.9}(2) that $D$ is a generalized Krull domain. Therefore, $D$ satisfies the ACCP by Proposition~\ref{Proposition 3.18}.

Next we show that $D$ satisfies the ACC on $t$-invertible $t$-ideals of $D$. Let $(I_i)_{i\in\mathbb{N}}$ be an ascending sequence of $t$-invertible $t$-ideals of $D$. Let $n$ be the exponent of ${\rm Cl}_t(D)$. Observe that $(I_i^n)_t$ is a principal ideal of $D$ for each $i\in\mathbb{N}$. Since $D$ satisfies the ACCP, there exists some $m\in\mathbb{N}$ such that $(I_m^n)_t=(I_k^n)_t$ for all $k\in\mathbb{N}_{\geq m}$. Let $k\in\mathbb{N}_{\geq m}$ and let $M\in t$-${\rm Max}(D)$. There are nonzero $a,b\in D$ such that $(I_m)_M=aD_M$ and $(I_k)_M=bD_M$. It follows that $a^nD_M=((I_m^n)_t)_M=((I_k^n)_t)_M=b^nD_M$, and thus $(\frac{a}{b})^n$ is a unit of $D_M$. Note that $D_M$ is integrally closed by Corollary~\ref{Corollary 3.13}. Consequently, $\frac{a}{b}\in D_M$, and hence $\frac{a}{b}$ is a unit of $D_M$. This implies that $(I_m)_M=aD_M=bD_M=(I_k)_M$. Therefore, $I_m=I_k$.

It remains to show that every nonzero $t$-ideal of $D$ is $t$-invertible. Let $I$ be a nonzero $t$-ideal of $D$. Then there exists a maximal element $J$ of the set of all $t$-invertible $t$-ideals of $D$ that are contained in $I$. Assume that $J\subsetneq I$. Then there exists some $a\in I\setminus J$. Since $D$ is a P$v$MD, we infer that $(J+aD)_t$ is a $t$-invertible $t$-ideal of $D$. Furthermore, $J\subsetneq (J+aD)_t\subseteq I$, a contradiction. Therefore, $I=J$ is a $t$-invertible $t$-ideal of $D$.

It is obvious that if ${\rm exp}({\rm Cl}_t(D))$ is finite, then ${\rm Cl}_t(D)$ is a torsion group. Hence, in this case, $D$ is an AVFD by Proposition~\ref{Proposition 5.5}.
\end{proof}

We end this paper with an example which shows that the assumption that ${\rm exp}({\rm Cl}_t(D))$ is finite in Theorem~\ref{Theorem 5.10} is crucial.

\begin{example}\label{Example 5.11}
{\em Let $D$ be a one-dimensional atomic Pr\"ufer domain of finite character such that $D$ is not a Dedekind domain. $($For an example of such a domain see \cite{g74}.$)$ Then $D$ is an atomic $t$-VIFD that is not a Krull domain.}
\end{example}

\bigskip
\noindent
\textbf{Acknowledgements.} We would like to thank the referee for various comments and suggestions that improved this paper.


\begin{thebibliography}{10}
\bibitem{ad14} Z. Ahmad and T. Dumitrescu, {\em Star almost Schreier domains}, Comm. Algebra \textbf{42}:11 (2014), 4898--4910.
\bibitem{a78} D. D. Anderson, {\em $\pi$-domains, overrings, and divisorial ideals}, Glasgow Math. J. \textbf{19}:2 (1978), 199--203.
\bibitem{az99} D. D. Anderson and M. Zafrullah, {\em Independent locally-finite intersections of localizations}, Houston J. Math. \textbf{25}:3 (1999), 433--452.
\bibitem{az19} D. D. Anderson and M. Zafrullah, {\em On $*$-semi-homogeneous integral domains}, pp. 7--31 in Advances in commutative algebra, edited by A. Badawi and J. Coykendall, Springer, 2019.
\bibitem{amz92} D. D. Anderson, J. L. Mott, and M. Zafrullah, {\em Finite character representations for integral domains}, Boll. Un. Mat. Ital. B (7) \textbf{6}:3 (1992), 613--630.
\bibitem{acp03} D. F. Anderson, G. W. Chang, and J. Park, {\em Generalized weakly factorial domains}, Houston J. Math. \textbf{29}:1 (2003) 1--13.
\bibitem{acz13} D. D. Anderson, G. W. Chang, and M. Zafrullah, {\em Integral domains of finite $t$-character}, J. Algebra \textbf{396} (2013), 169--183.
\bibitem{c21} G. W. Chang, {\em Unique factorization property of non-unique factorization domains}, J. Algebra Appl. \textbf{20}:3 (2021), art. id. 2150038.
\bibitem{cc23} G. W. Chang and H. S. Choi, {\em Ideal factorization in strongly discrete independent rings of Krull type}, J. Algebra Appl. \textbf{22}:2 (2023), art. id. 2350045.
\bibitem{cr20} G. W. Chang and A. Reinhart, {\em Unique factorization property of non-unique factorization domains II}, J. Pure Appl. Algebra \textbf{224}:12 (2020), art. id. 106430.
\bibitem{ch18} J. Coykendall and R. E. Hasenauer, {\em Factorization in Pr\"ufer domains}, Glasg. Math. J. \textbf{60}:2 (2018), 401--409.
\bibitem{dk10} T. Dumitrescu and W. Khalid, {\em Almost-Schreier domains}, Comm. Algebra \textbf{38}:8 (2010), 2981--2991.
\bibitem{dz11} T. Dumitrescu and M. Zafrullah, {\em $t$-Schreier domains}, Comm. Algebra \textbf{39}:3 (2011), 808--818.
\bibitem{egz08} S. El Baghdadi, S. Gabelli, and M. Zafrullah, {\em Unique representation domains, II}, J. Pure Appl. Algebra \textbf{212}:2 (2008), 376–393.
\bibitem{efz16} S. El Baghdadi, M. Fontana, and M. Zafrullah, {\em Intersections of quotient rings and Pr\"ufer $v$-multiplication domains}, J. Algebra Appl. \textbf{15}:8 (2016), art. id. 1650149.
\bibitem{e19} J. Elliott, {\em Rings, modules, and closure operations}, Springer, 2019.
\bibitem{fjs03} M. Fontana, P. Jara, and E. Santos, {\em Pr\"ufer $\star$-multiplication domains and semistar operations}, J. Algebra Appl. \textbf{2}:1 (2003), 21--50.
\bibitem{gh06} A. Geroldinger and F. Halter-Koch, {\em Non-unique factorizations: Algebraic, combinatorial and analytic theory}, Pure and Applied Mathematics (Boca Raton) \textbf{278}, Chapman \& Hall, Boca Raton, FL, 2006.
\bibitem{go65} R. Gilmer and J. Ohm, {\em Primary ideals and valuation ideals}, Trans. Amer. Math. Soc. \textbf{117} (1965), 237--250.
\bibitem{g74} A. Grams, {\em Atomic rings and the ascending chain condition for principal ideals}, Proc. Cambridge Philos. Soc. \textbf{75} (1974), 321--329.
\bibitem{h98} F. Halter-Koch, {\em Ideal systems: An introduction to multiplicative ideal theory}, Monographs and Textbooks in Pure and Applied Mathematics \textbf{211}, Marcel Dekker, New York, 1998.
\bibitem{hz89} E. Houston and M. Zafrullah, {\em On $t$-invertibility, II}, Comm. Algebra \textbf{17}:8 (1989), 1955--1969.
\bibitem{hz20} E. Houston and M. Zafrullah, {\em $\star$-super potent domains}, J. Commut. Algebra \textbf{12}:4 (2020), 489--507.
\bibitem{k89} B. G. Kang, {\em Pr\"{u}fer $v$-multiplication domains and the ring $R[X]_{N_v}$}, J. Algebra \textbf{123}:1 (1989), 151--170.
\bibitem{o00} B. Olberding, {\em Factorization into prime and invertible ideals}, J. London Math. Soc. (2) \textbf{62}:2 (2000), 336--344.
\bibitem{or20} B. Olberding and A. Reinhart, {\em Radical factorization in finitary ideal systems}, Comm. Algebra \textbf{48}:1 (2020), 228--253.
\bibitem{p72} E. M. Pirtle, {\em The ideal transform in a generalized Krull domain}, Rocky Mountain J. Math. \textbf{2}:1 (1972), 45--48.
\bibitem{xaz22} S. Xing, D. D. Anderson, and M. Zafrullah, {\em $*$-almost super-homogeneous ideals in $*$-h-local domains}, J. Algebra Appl. \textbf{21}:7 (2022), art. id. 2250136.
\bibitem{z75} M. Zafrullah, {\em Semirigid GCD domains}, Manuscripta Math. \textbf{17}:1 (1975), 55--66.
\bibitem{z77} M. Zafrullah, {\em Rigid elements in GCD domains}, J. Natur. Sci. Math. \textbf{17}:2 (1977), 7--14.
\bibitem{z85} M. Zafrullah, {\em A general theory of almost factoriality}, Manuscripta Math. \textbf{51}:1-3 (1985), 29--62.
\bibitem{z21} M. Zafrullah, {\em On $\star$-potent domains and $\star$-homogeneous ideals}, pp. 89--109 in Rings, monoids and module theory, edited by A. Badawi and J. Coykendall, Springer Proc. Math. Stat. \textbf{382}, Springer, 2021.\bigskip
\bibitem{z22} M. Zafrullah, {\em Semirigid GCD domains II}, J. Algebra Appl. \textbf{21}:8 (2022), art. id. 2250161.
\bibitem{zs60} O. Zariski and P. Samuel, {\em Commutative Algebra}, vol. II, D. Van Nostrand Co., Princeton, NJ, 1960.
\end{thebibliography}
\end{document}